\documentclass[a4paper,10pt,twoside,reqno]{amsart}

\usepackage[utf8]{inputenc}
\usepackage{amsmath, amsfonts, amssymb,amsthm}
\usepackage[mathscr]{eucal}
\usepackage[pdftex]{graphicx}
\usepackage{color}
\usepackage{multicol}

\usepackage[T1]{fontenc}
\usepackage[sc, osf]{mathpazo}
\linespread{1.25} 

\usepackage{enumitem}
\setlist[itemize]{topsep=0pt, leftmargin=2em}
\setlist[enumerate]{topsep=0pt, leftmargin=2em}

\usepackage[font=small]{caption}

\newcommand{\N}{\mathbb{N}}

\newcommand{\R}{\mathbb{R}}
\newcommand{\C}{\mathbb{C}}
\newcommand{\s}{\mathbb{S}}
\newcommand{\h}{\mathbb{H}}
\newcommand{\E}{\mathbb{E}}

\newcommand{\df}{\mathrm{d}}
\newcommand{\prodesc}[2]{\left\langle#1,#2\right\rangle}
\newcommand{\abs}[1]{\left\lvert#1\right\rvert}
\DeclareMathOperator{\arctanh}{arctanh}
\DeclareMathOperator{\sech}{sech}
\DeclareMathOperator{\pIm}{Im}

\DeclareMathOperator{\Area}{Area}

\newtheorem{theorem}{Theorem}[section]
\newtheorem{proposition}[theorem]{Proposition}
\newtheorem{corollary}[theorem]{Corollary}
\newtheorem{lemma}[theorem]{Lemma}

\theoremstyle{definition}

\theoremstyle{remark}
  \newtheorem{remark}[theorem]{Remark}

\numberwithin{equation}{section}

\title[Horizontal Delaunay surfaces in $\s^2\times\R$ and $\h^2\times \R$]{Horizontal Delaunay surfaces with constant mean curvature in $\s^2\times\R$ and $\h^2\times \R$}

\author{José M. Manzano}
\address{Departamento de Matemáticas, Universidad de Jaén, Spain.
}
\email{manzanopreg@gmail.com}

\author{Francisco Torralbo}
\address{Departamento de Geometría y Topología, Universidad de Granada, Spain}
\email{ftorralbo@ugr.es}

\subjclass[2010]{Primary 53A10; Secondary 53C30}

\keywords{Constant mean curvature surfaces, compact surfaces, homogeneous three-manifolds, product spaces, conjugate constructions, unduloid, nodoid.}

\setlength{\parindent}{1em}
\setlength{\parskip}{0.2em}

\begin{document}

\begin{abstract}
  We obtain a $1$-parameter family of \emph{horizontal Delaunay surfaces} with positive constant mean curvature in $\mathbb{S}^2\times\mathbb{R}$ and $\mathbb{H}^2\times\mathbb{R}$, being the mean curvature larger than $\frac{1}{2}$ in the latter case. These surfaces are not equivariant but singly periodic, lie at bounded distance from a horizontal geodesic, and complete the family of horizontal unduloids given in~\cite{MT}. We study in detail the geometry of the whole family and show that horizontal unduloids are properly embedded in $\mathbb H^2\times\mathbb{R}$. We also find (among unduloids) families of embedded constant mean curvature tori in $\mathbb S^2\times\mathbb{R}$ which are continuous deformations from a stack of tangent spheres to a horizontal invariant cylinder. In particular, we find the first non-equivariant examples of embedded tori in $\mathbb{S}^2\times\mathbb{R}$, which have constant mean curvature $H>\frac12$. Finally, we prove that there are no properly immersed surface with constant mean curvature $H\leq\frac{1}{2}$ at bounded distance from a horizontal geodesic in $\mathbb{H}^2\times\mathbb{R}$.
\end{abstract}

\maketitle

\section{Introduction}

In 1841, Delaunay classified the $H$-surfaces in Euclidean space $\R^3$ with rotational symmetry for all $H>0$, where the prefix $H$ indicates the surface has constant mean curvature $H$. Rotationally invariant $H$-surfaces are well known in Riemannian homogeneous simply connected three-manifolds admitting a $1$-parame\-ter group of rotations, namely in space forms $\mathbb{M}^3(c)$ of constant sectional curvature $c\in\R$ or in the so-called $\mathbb{E}(\kappa, \tau)$-spaces, $\kappa-4\tau^2\neq 0$, whose isometry group has dimension four, see~\cite{HH89,PR99,Tomter1993, Torralbo2010}. Delaunay-type surfaces show up in $\mathbb{M}^3(c)$ when $|H|>-c$ and in $\mathbb{E}(\kappa, \tau)$ when $4H^2+\kappa>0$, and they usually display similar geometric shapes despite of the variety of the aforesaid ambient spaces; namely, they are essentially spheres, cylinders, unduloids and nodoids. In $\mathbb{E}(\kappa,\tau)$, the value of $H$ (if any) such that $4H^2+\kappa=0$ is usually called \emph{critical mean curvature} in the literature, because surfaces with subcritical, critical and supercritical mean curvature usually display very different geometric features, see~\cite{Daniel07,Man12,Mazet2015,MPR} and the references therein.

In space forms $\mathbb{M}^3(c)$ with $c\leq 0$, the only properly embedded $H$-surfaces with $|H|>-c$ that stay at a bounded distance from a geodesic are Delaunay surfaces. This condition is often called \emph{cylindrical boundedness} and can be relaxed to the topological assumptions of finite genus and two ends, see~\cite{KKS1989, KKMS1992}. On the contrary, cylindrical boundedness makes little sense in the three-sphere $\mathbb S^3$, and proper embeddedness with two ends is naturally replaced by assuming the surface is an embedded torus. Andrews and Li~\cite{AL2015}, building upon the work of Brendle~\cite{Brendle2013}, characterized the embedded $H$-tori in $\mathbb S^3$ as Delaunay surfaces invariant by rotations about a geodesic. Embeddedness plays an essential role since there exist immersed non-rotational $H$-tori constructed by Bobenko~\cite{Bobenko1991}.

In the case of the product spaces $\mathbb{M}^2(\kappa)\times\mathbb{R}=\mathbb{E}(\kappa,0)$, where $\mathbb{M}^2(\kappa)$ stands for the complete simply connected surface of constant curvature $\kappa$, Mazet~\cite{Mazet2015} characterized unduloids in $\mathbb{H}^2(\kappa)\times\mathbb{R}$ as the only properly embedded finite-topology $H$-surfaces which are cylindrically bounded with respect to a vertical geodesic. His result also applies to the product of an hemisphere of $\s^2(\kappa)$ and the real line, though it cannot be extended to the whole $\mathbb{S}^2(\kappa)\times \mathbb{R}$. This is a consequence of the fact that there do exist compact non-rotational $H$-surfaces in $\mathbb{S}^2(\kappa)\times\mathbb{R}$, constructed by the authors in~\cite{MT}, which are now proved embedded by Theorem~\ref{thm:embeddedness} below. We have called such examples \emph{horizontal unduloids}, and they exist in both $\mathbb{S}^2(\kappa)\times\mathbb{R}$ and $\mathbb{H}^2(\kappa)\times\mathbb{R}$ provided that $4H^2+\kappa>0$. The naming is motivated by their invariance under a discrete group of horizontal translations as well as by the fact that their shapes resemble those of Delaunay's unduloids. They are cylindrically bounded with respect to a horizontal geodesic and, for a fixed value of $H$, form a continuous $1$-parameter family of $H$-surfaces from a stack of rotationally invariant $H$-spheres to an $H$-cylinder ($H$-torus if $\kappa > 0$) invariant under a continuous group of horizontal translations. 

In this paper we incorporate a $1$-parameter family of \emph{horizontal nodoids} which completes the aforementioned family of unduloids in both $\mathbb{S}^2(\kappa)\times\mathbb{R}$ and $\mathbb{H}^2(\kappa)\times\mathbb{R}$. This is tackled by considering a Plateau problem over an appropriate geodesic polygon in a three-manifold (locally isometric to a Berger sphere), whose solution is conjugate ---in the sense of Daniel~\cite{Daniel07}--- to a fundamental piece of the desired nodoid, and then it is completed by successive mirror symmetries of the ambient product space. It is important to mention that the new polygons we have developed are not Nitsche graphs in the vertical direction (they have two horizontal components projecting onto the same geodesic of $\mathbb{S}^2(\kappa)$ via the Hopf fibration), and hence this is the first conjugate construction in product spaces whose fundamental piece is not a vertical graph. This is an additional difficulty, since most arguments developed in literature strongly depend upon the graphical condition, and also because the Plateau problem is not well posed to apply Meeks and Yau's solution~\cite{MY82} in a Berger sphere (as in the case of unduloids) but in the universal cover of some subset. We will come up with a new approach based on the comparison with subsets of Clifford tori to understand the interior points with vertical tangent plane, as well as finding a Killing direction in which the surface is really a graph (this will be discussed in Section~\ref{sec:geometry}). We will obtain a faithful depiction of the new surfaces, in particular showing that they actually look like Delaunay's nodoids (see Figure~\ref{fig:conjugate-polygon}). The following statement summarises the whole family of horizontal Delaunay $H$-surfaces.

\begin{theorem}\label{thm:nodoids}
  Fix $\kappa\in\R$ and a horizontal geodesic $\Gamma\subset\mathbb{M}^2(\kappa)\times\{0\}$. There exists a family $\Sigma_{\lambda,H}^*$, parametrized by $\lambda\geq 0$ and $H>0$ such that $4H^2+\kappa>0$, of complete $H$-surfaces in $\mathbb{M}^2(\kappa)\times \R$, invariant under a discrete group of translations along $\Gamma$ with respect to which they are cylindrically bounded. They are also symmetric about the totally geodesic surfaces $\mathbb{M}^2(\kappa)\times\{0\}$ and $\Gamma\times\mathbb R$. Moreover:
\begin{enumerate}[label=(\roman*)]
  \item $\Sigma_{0,H}^*$ is the $H$-cylinder ($H$-torus if $\kappa>0$) invariant under the continuous $1$-parameter group of translations along $\Gamma$;
  \item $\Sigma_{\lambda,H}^*$ is the unduloid-type surface constructed in~\cite{MT} if $0<\lambda<\frac{\pi}{2}$;
  \item $\Sigma_{\frac{\pi}{2},H}^*$ is a stack of tangent rotationally invariant $H$-spheres centered on $\Gamma$;
  \item $\Sigma_{\lambda,H}^*$ is a nodoid-type surface if $\lambda>\frac{\pi}{2}$. 
\end{enumerate}
\end{theorem}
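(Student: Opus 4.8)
The plan is to realise these surfaces through a conjugate Plateau construction in the spirit of Lawson and Daniel. Since $4H^2+\kappa>0$, the space $\E(4H^2+\kappa,H)$ is a Berger sphere, and Daniel's sister correspondence~\cite{Daniel07} matches each minimal surface there with an $H$-surface in $\E(\kappa,0)=\mathbb{M}^2(\kappa)\times\R$. The first task is to design, for each admissible $(\lambda,H)$, a geodesic polygon $\Gamma_{\lambda}$ in the Berger sphere whose edges alternate between horizontal geodesics and fibres of the Hopf fibration, with edge lengths governed by $\lambda$, so that after conjugation and Schwarz reflection the fundamental piece closes up into the desired complete surface. The degenerate values $\lambda=0$ and $\lambda=\tfrac{\pi}{2}$ correspond to collapses of $\Gamma_\lambda$ producing, respectively, the translationally invariant cylinder (torus if $\kappa>0$) and the stack of tangent $H$-spheres, which accounts for items (i) and (iii).

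The second task is to solve the Plateau problem for $\Gamma_\lambda$. For $0<\lambda<\tfrac{\pi}{2}$ the polygon is a Nitsche graph in the fibre direction, so existence, uniqueness and the graphical character of the minimal disk follow exactly as for the unduloids of~\cite{MT} by invoking Meeks and Yau~\cite{MY82} in the Berger sphere, giving item (ii). The genuinely new regime is $\lambda>\tfrac{\pi}{2}$ (item (iv)): here two of the horizontal edges project under the Hopf fibration onto the same geodesic of $\mathbb{M}^2(\kappa)$, so the polygon is no longer a vertical graph and the Plateau problem is ill posed directly in the compact Berger sphere. My approach is to lift the contour to the universal cover of a suitable mean-convex subset of the Berger sphere (unwinding the Hopf fibration) and to solve the Plateau problem there, producing an embedded minimal disk bounded by the lifted polygon.

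The analytic core of the argument --- and the step I expect to be the main obstacle --- is to control this non-graphical disk tightly enough that its conjugate, once reflected, is a complete surface with the asserted geometry and no spurious self-intersections. I would achieve this by two complementary devices. First, I would bound the disk by explicit pieces of Clifford tori in the Berger sphere via the maximum principle; since Clifford tori are ruled by fibres, this both confines the disk and locates precisely its interior points of vertical (fibre-direction) tangency, which are exactly the points obstructing the vertical-graph property. Second, I would exhibit a rotational Killing field of the Berger sphere transverse to the disk, with respect to which it \emph{is} a graph; this restores a comparison principle strong enough to propagate embeddedness and to read off the boundary contact structure needed for the reflection step.

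Finally, I would transfer the picture to $\mathbb{M}^2(\kappa)\times\R$. Under Daniel's correspondence the horizontal edges and fibres bounding $\Gamma_\lambda$ map to boundary curves of the conjugate piece contained in totally geodesic surfaces of the product --- horizontal slices $\mathbb{M}^2(\kappa)\times\{t\}$ and vertical planes $\gamma\times\R$ --- which the $H$-surface meets orthogonally. Successive Schwarz reflections across $\mathbb{M}^2(\kappa)\times\{0\}$ and $\Gamma\times\R$ then extend the fundamental piece to the complete surface $\Sigma_{\lambda,H}^*$, which inherits these two mirror symmetries by construction; the stated discrete translation symmetry along $\Gamma$ arises as the composition of the reflections in the two vertical planes orthogonal to $\Gamma$ that bound the piece, and cylindrical boundedness is a consequence of the $C^0$ confinement produced by the Clifford-torus comparison. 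The continuity in $\lambda$, together with the two degenerations, knits items (i)--(iv) into a single family, as will be detailed in Section~\ref{sec:geometry}.
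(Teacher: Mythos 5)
Your proposal is correct and follows essentially the same route as the paper: a conjugate Plateau construction over a geodesic quadrilateral, solved by Meeks--Yau in a mean-convex solid inside the universal cover of the Berger sphere minus a fiber (bounded by a Clifford-torus cylinder and a helicoid), with Clifford-torus comparisons controlling the interior vertical tangencies, a screw-motion Killing field with respect to which the disk is a genuine graph, and extension of the conjugate piece by mirror symmetries so that the reflections in the two vertical planes generate the discrete translations along $\Gamma$. The only minor discrepancies are that the polygon does not collapse at $\lambda=0$ (the spanning disk there is part of an equivariant spherical helicoid, which is what yields the invariant cylinder/torus), and that cylindrical boundedness follows from compactness of the fundamental piece together with the discrete group of translations, rather than from the Clifford-torus confinement in the Berger sphere.
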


Although $\kappa$ may be assumed equal to $-1$, $0$ or $1$ after scaling the metric, we would rather keep it as a real number to understand how the case $\kappa=0$ fits in the whole family. Observe that, if $\kappa=0$, the surface $\Sigma_{\lambda,H}^*$ is one of the classical Delaunay $H$-surfaces in $\R^3$ (see Remarks~\ref{rmk:round-sphere1} and~\ref{rmk:round-sphere2}), in which case the parameter $H$ represents a variation by homotheties once $\lambda$ is fixed. Their conjugate minimal surfaces in the round sphere $\mathbb{S}^3(H^2)$ are the so-called \emph{spherical helicoids} (see Section~\ref{subsec:conjugate-Plateau-technique} and also~\cite[Proposition~1]{MT}).

The authors~\cite{MTAJM} have recently constructed the first examples of compact embedded $H$-surfaces in $\mathbb{S}^2(\kappa)\times\R$ with genus $g\geq 2$ and $H<\frac{1}{2}$, and the existence of non-equivariant examples in the cases $g=1$ or $H\geq\frac{1}{2}$ remained unknown. Embeddedness is usually tough in conjugate constructions, specially when no Krust-type property holds true. In~\cite{MTAJM}, embeddedness was achieved by proving the convexity of the boundary of the domain of $\mathbb{S}^2(\kappa)$ over which the compact surface is a bigraph using the estimates in~\cite{Man12}. As for the present surfaces $\Sigma_{\lambda,H}^*$, we tackle embeddedness by identifying Killing vector fields in Berger spheres and in $\mathbb{M}^2(\kappa)\times\R$ that produce the same function in the kernel of the common stability operator of conjugate surfaces. Irrespective of $\kappa\in\R$, our proof goes through proving that the fundamental annulus is a maximal stable domain of $\Sigma_{\lambda,H}^*$, and from there we infer that it is a graph with respect to a horizontal direction (see Proposition~\ref{prop:stability} and Figure~\ref{fig:conjugate-polygon}). If $\kappa\leq 0$, this establishes that horizontal unduloids are properly embedded (Proposition~\ref{prop:properly-embedded-unduloids}), as conjectured in~\cite{MT}. If $\kappa>0$, among all surfaces given by Theorem~\ref{thm:nodoids}, next result determines which ones are compact and embedded, whose moduli space is represented in Figure~\ref{fig:compact-embedded-moduli-space}. Note that horizontal nodoids are not even Alexandrov-embedded for any $\kappa\in\R$.

\begin{theorem}\label{thm:embeddedness}
Fix $\kappa>0$. For each integer $m\geq 2$, there is a family $\mathcal T_m$ of embedded $H$-tori in $\mathbb{S}^2(\kappa)\times\mathbb R$ parametrized as
\[\mathcal T_m=\left\{\Sigma_{\lambda_m(H),H}^*:\cot(\tfrac{\pi}{2m})<\tfrac{2H}{\sqrt{\kappa}}\leq\sqrt{m^2-1}\right\}.\]
where $H\mapsto\lambda_m(H)$ is a continuous strictly decreasing function ranging from $\frac{\pi}{2}$ to $0$.
\begin{enumerate}
  \item The family $\mathcal T_m$ is a continuous deformation (in which $H$ varies) from a stack of $m$ tangent spheres evenly distributed along $\Gamma$ to an equivariant torus.
  \item The surfaces $\Sigma_{\lambda_m(H),H}^*$, along with $H$-spheres $\Sigma_{\pi/2,H}^*$ and $H$-cylinders $\Sigma_{0,H}^*$ for all $H>0$, are the only compact embedded $H$-surfaces among all $\Sigma_{\lambda,H}^*$ (for all $\kappa\in\R$).
\end{enumerate}
\end{theorem}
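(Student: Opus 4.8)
The plan is to reduce the whole statement to the analysis of a single \emph{period function}. Translations along the closed geodesic $\Gamma\subset\mathbb{S}^2(\kappa)\times\{0\}$ are exactly the rotations of $\mathbb{S}^2(\kappa)$ about the axis orthogonal to the plane of $\Gamma$, so a complete surface $\Sigma^*_{\lambda,H}$ invariant under the discrete group generated by one such translation closes up into a compact surface if and only if the corresponding rotation angle $\theta(\lambda,H)$ equals $\frac{2\pi}{m}$ for some integer $m\ge 2$; the compact surface is then the union of $m$ fundamental pieces filling the $m$ angular sectors around $\Gamma$. The value $m=1$ cannot occur because $\theta<2\pi$ throughout (see the boundary values below). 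Hence I would first settle embeddedness of each fixed piece and then solve the closing equation $\theta(\lambda,H)=\frac{2\pi}{m}$ in the admissible range $0\le\lambda\le\frac\pi2$.

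To dispose of the remaining cases: for $\kappa\le 0$ the geodesic $\Gamma$ is a line, so the only compact members of the family are the rotationally invariant $H$-spheres appearing at $\lambda=\frac\pi2$, since every other $\Sigma^*_{\lambda,H}$ extends to infinity along $\Gamma$; and for $\lambda>\frac\pi2$ the surface is a horizontal nodoid, which by hypothesis is not even Alexandrov-embedded. This confines the compact embedded examples to $\kappa>0$ and $0\le\lambda\le\frac\pi2$. The two endpoints are direct: $\Sigma^*_{0,H}$ carries a continuous group of symmetries, hence is an embedded equivariant $H$-torus for every $H>0$, while the rotationally invariant $H$-sphere at $\lambda=\frac\pi2$ is compact and embedded for every $H>0$ and every $\kappa$. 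For the genuine unduloids $0<\lambda<\frac\pi2$, embeddedness of a fundamental piece is precisely Proposition~\ref{prop:stability}: the fundamental annulus is a maximal stable domain, and hence a graph over a horizontal Killing direction. I would then promote this to global embeddedness of the closed-up torus by checking that, for $\lambda\le\frac\pi2$, the graph property confines each fundamental piece to its own angular sector (this is exactly what fails for the overhanging nodoids), so that the $m$ rotated copies meet only along the common symmetry surfaces $\Gamma\times\R$ and $\mathbb{S}^2(\kappa)\times\{0\}$, as in Proposition~\ref{prop:properly-embedded-unduloids}.

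It remains to analyse $\theta(\lambda,H)$ on $[0,\frac\pi2]$, and I would compute its two boundary values from the conjugate construction. At $\lambda=0$ the unduloid bifurcates from the equivariant cylinder, and the wavelength of the bifurcating Jacobi mode gives $\theta(0,H)=\frac{2\pi}{\sqrt{(2H/\sqrt\kappa)^2+1}}$; at $\lambda=\frac\pi2$ the period is the angular width of a tangent $H$-sphere, namely $\theta(\frac\pi2,H)=4\arctan(\sqrt\kappa/(2H))$. A short comparison gives $\theta(\frac\pi2,H)<\theta(0,H)$, and the crucial monotonicity step is to show that $\lambda\mapsto\theta(\lambda,H)$ is strictly decreasing. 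Granting it, for each $m\ge 2$ the equation $\theta(\lambda,H)=\frac{2\pi}{m}$ is solvable with $\lambda\in[0,\frac\pi2)$ precisely when $\theta(\frac\pi2,H)\le\frac{2\pi}{m}\le\theta(0,H)$, and substituting the two boundary formulas turns these inequalities into $\cot(\frac{\pi}{2m})<\frac{2H}{\sqrt\kappa}\le\sqrt{m^2-1}$, the upper endpoint being realized at $\lambda_m=0$ (equivariant torus) and the lower one attained only in the limit $\lambda_m\to\frac\pi2$ (stack of $m$ tangent spheres). The implicit function theorem, together with the strict monotonicity of $\theta$ in both variables, then makes $H\mapsto\lambda_m(H)$ continuous and strictly decreasing from $\frac\pi2$ to $0$; this is the parametrization of $\mathcal T_m$ and gives the deformation asserted in~(1), while collecting the endpoints, the nodoid exclusion and the $\kappa\le 0$ exclusion yields the exhaustiveness in~(2), whose moduli space is the one depicted in Figure~\ref{fig:compact-embedded-moduli-space}.

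I expect the strict monotonicity of $\theta(\cdot,H)$ to be the main obstacle. It does not follow formally and must be extracted from the period integral produced by the conjugate Plateau construction, which is delicate here because the fundamental piece is not a vertical graph, so the standard monotonicity arguments for equivariant Delaunay surfaces are unavailable. A secondary difficulty is the global embeddedness of the closed-up torus: Proposition~\ref{prop:stability} controls a single fundamental piece, but excluding intersections between non-adjacent pieces wrapped around the compact geodesic $\Gamma$ requires the horizontal graph direction to be compatible with the rotational assembly, which is precisely why the confinement to angular sectors---valid only for $\lambda\le\frac\pi2$---is essential.
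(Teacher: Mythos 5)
Your outline follows the paper's route exactly: your period function $\theta(\lambda,H)$ is $2\sqrt{\kappa}\,\ell_0(\lambda)$ in the paper's notation, your two boundary values coincide with the paper's computations $\ell_0(0)=\frac{\pi}{\sqrt{4H^2+\kappa}}$ and $\ell_0(\frac{\pi}{2})=\frac{2}{\sqrt{\kappa}}\arctan\frac{\sqrt{\kappa}}{2H}$, and your closing equation $\theta=\frac{2\pi}{m}$ is the paper's condition $\ell_0(\lambda)=\frac{\pi}{m\sqrt{\kappa}}$, which yields the same interval $\cot(\frac{\pi}{2m})<\frac{2H}{\sqrt{\kappa}}\le\sqrt{m^2-1}$ with the same interpretation of the endpoints. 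But the two steps you leave open are genuine gaps in the proposal as written, and they are precisely where the paper does its work. The step you defer (``granting it'') and call the main obstacle --- strict monotonicity of $\lambda\mapsto\theta(\lambda,H)$ --- is not open at all: it is Corollary~\ref{cor:monotonicity-ell}(a). Indeed $\theta(\lambda,H)=-2\sqrt{\kappa}\int_{h_0}\nu_\lambda$, the curve $\widetilde h_0$ does not depend on $\lambda$, and the integrand is pointwise strictly monotone in $\lambda$ by the barrier comparison of angle functions in Proposition~\ref{prop:angle}(c) (together with~\cite{MT} for $\lambda\le\frac{\pi}{2}$). No period-integral analysis of equivariant Delaunay type is needed, and your concern that the loss of the vertical graph property blocks this is misplaced: the comparison argument does not use it, and in any case all candidate embedded examples lie in the range $\lambda\le\frac{\pi}{2}$, where the fundamental piece is a graph.

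The second gap is the confinement of the fundamental annulus to its angular sector, which you assert rather than prove; compatibility of the graph direction with the rotational assembly is not an argument, and Proposition~\ref{prop:stability} alone does not give it. The actual mechanism is that $\Sigma_\lambda$ must avoid the polar axis $P_1\cap P_2$, and this is where $H>\frac{\sqrt{\kappa}}{2}$ enters geometrically, not merely through the arithmetic of the closing inequality: by Corollary~\ref{cor:monotonicity-ell} one has $\ell_2(\lambda)<\ell_2(\frac{\pi}{2})$, and $\ell_2(\frac{\pi}{2})$ --- the bigraph radius of the $H$-sphere --- stays below a quarter of a great circle precisely when $H>\frac{\sqrt{\kappa}}{2}$, so $h_2$ misses the poles; the estimate $\ell_1(\lambda)\le\ell_1(0)=\ell_2(0)\le\ell_2(\lambda)$ does the same for $h_1$; and an interior point of $A_\lambda$ on $P_1\cap P_2$ is excluded by Corollary~\ref{cor:horizontal-graph}, because the Killing field $X$ vanishes identically along that axis while the interior of $A_\lambda$ is transversal to $X$. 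Only with these three facts in hand can the argument of Proposition~\ref{prop:properly-embedded-unduloids} be repeated to place each annulus, embedded, inside its own wedge. Finally, a minor imprecision in your first paragraph: compactness alone only requires $\ell_0(\lambda)$ to be a rational multiple of $\frac{\pi}{\sqrt{\kappa}}$ (Remark~\ref{rmk:compactness}); it is embeddedness, not compactness, that forces the period to close up in a single turn around $\Gamma$, so your opening equivalence should be phrased for compact \emph{embedded} surfaces.
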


\begin{figure}[htb]
\includegraphics{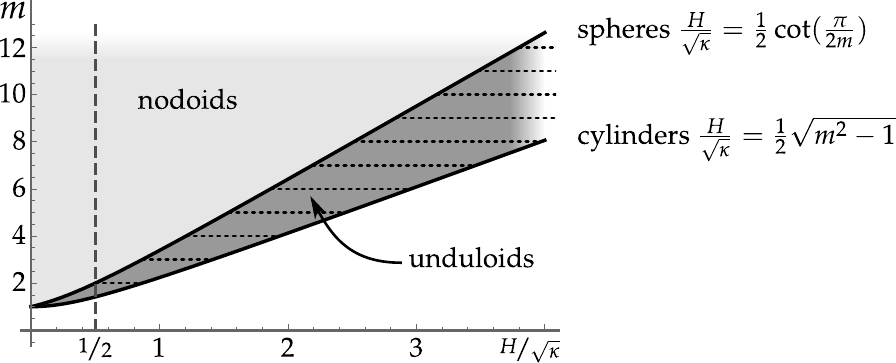}
\caption{The darker shaded region represents the moduli space of $\Sigma_\lambda^*\subset\mathbb{S}^2(\kappa)\times\mathbb{R}$, $\lambda\in[0,\frac\pi2]$, in terms of $\frac{H}{\sqrt\kappa}$ and $m$. Dotted horizontal segments indicate compact embedded unduloids as solutions to the inequality~\eqref{thm:embeddedness:eqn2} with integer $m$. The vertical dashed line indicates that such embedded unduloids exist if and only if $H>\frac{\sqrt\kappa}{2}$.}\label{fig:compact-embedded-moduli-space} 
\end{figure}

In particular, given $\kappa>0$ and $H>\frac{\sqrt{\kappa}}{2}$, there are (finitely many) compact embedded $H$-unduloids in $\mathbb{S}^2(\kappa)\times\mathbb{R}$, and these are the first known embedded $H$-tori in $\mathbb{S}^2(\kappa)\times\mathbb{R}$ (for any $H$) which are not equivariant. The case $H=\frac{\sqrt{\kappa}}{2}$ occurs as a limit surface for $m=2$ and consists of two $\frac{\sqrt{\kappa}}{2}$-spheres tangent along a common equator, each of which is a bigraph over an hemisphere. As in~\cite{MTAJM}, we find again an obstruction at $H=\frac{\sqrt{\kappa}}{2}$, which gives additional evidence that this value is important for the existence of compact embedded $H$-surfaces in $\mathbb{S}^2(\kappa)\times\mathbb{R}$. It is fundamental to remark that this value of $H$ is not related to the aforementioned notion of critical mean curvature. Note also that compact examples are dense in the family $\Sigma_{\lambda,H}^*$, showing up just when $\lambda$ satisfies a rationality condition (see Remark~\ref{rmk:compactness}), though they are never embedded if $H\leq\frac{\sqrt{\kappa}}{2}$.

We are also interested in the maximum height that $\Sigma_{\lambda,H}^*$ reaches over the horizontal slice of symmetry. We will show that the maximum height of $\Sigma^*_{\lambda, H}$ is strictly increasing in the parameter $\lambda$ (see Proposition~\ref{prop:height}). In particular, the height of a horizontal unduloid is strictly between the heights of the sphere and the cylinder. Also, horizontal nodoids are taller than the corresponding $H$-spheres, so we can confirm that the Serrin-type height estimates in~\cite{AEG}, as well as the boundary curvature estimates in~\cite{Manzano}, fail in general for symmetric surfaces which are not bigraphs even though their heights might be bounded.

It is important to point out why the condition $4H^2+\kappa>0$ appears naturally in Theorem~\ref{thm:nodoids}. The many dissimilarities between supercritical, critical and subcritical $H$-surfaces can be explained by the fact that their conjugate minimal surfaces belong to Berger spheres $\mathbb{S}^3_b(4H^2+\kappa, H)$ if $4H^2+\kappa>0$, the Heisenberg space $\mathrm{Nil}_3 = \mathbb{E}(0,H)$ if $4H^2+\kappa=0$, or the universal cover of the special linear group $\widetilde{\mathrm{SL}}_2(\R)=\mathbb{E}(4H^2+\kappa, H)$ if $4H^2+\kappa<0$, whose geometries are really different. As a matter of fact, the required geodesic polygons in our construction do not even exist in $\mathrm{Nil}_3$ or $\widetilde{\mathrm{SL}}_2(\R)$. This made us surmise the nonexistence of $H$-surfaces cylindrically bounded with respect to a horizontal geodesic if $4H^2+\kappa\leq0$.

\begin{theorem}\label{thm:halfspace}
There exist properly immersed $H$-surfaces in $\mathbb{M}^2(\kappa)\times\mathbb{R}$ cylindrically bounded with respect to a horizontal geodesic if and only if $4H^2+\kappa>0$.
\end{theorem}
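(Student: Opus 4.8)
The plan is to prove the two implications separately. For the existence part, suppose $4H^2+\kappa>0$. Then Theorem~\ref{thm:nodoids} already supplies the required example: the equivariant surface $\Sigma^*_{0,H}$ is a properly embedded $H$-surface invariant under the continuous group of translations along $\Gamma$ and is, by construction, cylindrically bounded with respect to $\Gamma$ (any unduloid $\Sigma^*_{\lambda,H}$ with $0<\lambda<\tfrac\pi2$ would serve equally well). It therefore remains to prove non-existence when $4H^2+\kappa\le 0$. Since $H\ge 0$, this forces $\kappa\le 0$; the borderline $\kappa=0$ (hence $H=0$) is the classical statement that no complete minimal surface is confined to a solid cylinder of $\R^3$, which follows from the half-space theorem, so I may assume $\kappa<0$ and, after rescaling, $\kappa=-1$. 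The statement to be proved becomes: there is no properly immersed $H$-surface with $H\le\frac12$ staying at bounded distance from a horizontal geodesic $\Gamma\subset\h^2\times\{0\}$.

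First I would turn the hypothesis into a confinement statement. If $\Sigma$ lies within distance $R$ of $\Gamma$, then since the metric is a product one has $d(p,\Gamma)^2=d_{\h^2}(\pi(p),\gamma)^2+t(p)^2$, where $\pi$ and $t$ are the projections onto $\h^2$ and $\R$ and $\gamma=\pi(\Gamma)$. Hence $\Sigma$ is contained simultaneously in the slab $\{|t|\le R\}$ and in $N_R(\gamma)\times\R$, where $N_R(\gamma)$ is the $R$-neighbourhood of $\gamma$; equivalently $\Sigma\subset N_R(\gamma)\times[-R,R]$. The compact case is disposed of first: horocylinders (vertical cylinders over horocycles based at a fixed ideal point) are $H$-surfaces with $H=\frac12$ whose mean curvature vector points into the horoball, and they foliate $\h^2\times\R$; sliding this foliation until a last contact with a compact $\Sigma$ and invoking the maximum principle forces $\Sigma$ to coincide with a non-compact horocylinder, a contradiction (for $H<\frac12$ the comparison is already strict). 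Thus $\Sigma$ must be non-compact, and since it is proper and confined to $N_R(\gamma)\times[-R,R]$ its ends necessarily escape towards the ideal endpoints $p_\pm$ of $\gamma$.

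For the non-compact case my approach is to compare $\Sigma$ with the equivariant $H$-surfaces. Because $4H^2+\kappa\le 0$, Theorem~\ref{thm:nodoids} shows that there is \emph{no} cylindrically bounded $H$-surface invariant under the continuous group of translations along $\Gamma$: the invariant profile equation has no bounded solution for $H\le\frac12$, its solutions running off to infinity transversally to $\gamma$. The strategy is to exploit the Killing field $Y$ generating the hyperbolic translations along $\gamma$, which is bounded on $N_R(\gamma)$, together with the associated homologically invariant CMC flux of $Y$ across cross-sections of $\Sigma$ transverse to $\gamma$. Pushing $\Sigma$ by the translations $\{\psi_s\}$ towards an end at $p_+$ and extracting a limit should produce a $Y$-invariant $H$-surface still confined to $N_R(\gamma)\times[-R,R]$; as no such confined equivariant surface exists for $H\le\frac12$, this yields the contradiction. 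As an alternative barrier one may use the entire $H$-graphs invariant under a parabolic subgroup fixing an ideal point $\xi\notin\{p_\pm\}$, which exist for $H<\frac12$ (degenerating to horocylinders as $H\to\frac12$) and foliate $\h^2\times\R$ by vertical translation.

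The main obstacle is the behaviour of $\Sigma$ at its ideal ends, where every natural barrier degenerates: both the parabolic $H$-graphs and the equidistant or horocyclic cylinders become asymptotic to the directions in which the ends of $\Sigma$ escape, so naive sweeping produces a first contact only from the convex side of a leaf and the maximum principle gives no information. Overcoming this is precisely where the double confinement (slab and $N_R(\gamma)$) and the boundedness of $Y$ on $N_R(\gamma)$ must be used: they should guarantee that the translates $\psi_s(\Sigma)$ have locally bounded geometry and that the flux stays bounded, so that the translation limit exists and is genuinely $Y$-invariant. Establishing the uniform curvature estimate for the a priori only immersed surfaces $\psi_s(\Sigma)$—or, in the flux formulation, proving that the conserved flux of $Y$ cannot be realised by any confined end when $H\le\frac12$—is the technical heart of the argument; once it is in place, the absence of bounded equivariant $H$-surfaces from Theorem~\ref{thm:nodoids} closes the proof.
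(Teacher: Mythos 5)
Your existence direction and the disposal of the case $\kappa=H=0$ via the classical halfspace theorem coincide with the paper. The problem lies in the core nonexistence argument for $0\le H\le\frac12$ in $\mathbb{H}^2\times\mathbb{R}$, where your plan has a genuine gap. The strategy of pushing $\Sigma$ by the translations $\{\psi_s\}$ and extracting a limit requires uniform local area bounds and curvature estimates for the translates; for a surface that is only \emph{properly immersed} no such estimates exist (this is not a technicality one can expect to fill in: immersed $H$-surfaces confined to a solid cylinder may have unbounded second fundamental form and unbounded area in balls of fixed radius, and no compactness theorem applies). You correctly identify this as ``the technical heart,'' but it is precisely the step that cannot be supplied by the stated means. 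Moreover, even granting that a limit exists, a subsequential limit of the translates $\psi_s(\Sigma)$ is merely another $H$-surface in the same confinement region; nothing forces it to be invariant under $\{\psi_s\}$, so the contradiction with the nonexistence of confined equivariant surfaces does not follow. Finally, a smaller inaccuracy: Theorem~\ref{thm:nodoids} is purely an existence statement for $4H^2+\kappa>0$ and says nothing about equivariant surfaces when $4H^2+\kappa\le 0$; the unboundedness of the invariant profile curves for $H\le\frac12$ is a separate fact from the classification of equivariant surfaces, not a consequence of that theorem.

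The paper's proof is designed exactly to avoid putting any analytic burden on $S$. It first proves (Lemma~\ref{lm:foliation-CMC-Cylinders}) that the horizontal $H$-cylinders $C_H$, $\frac12<H<+\infty$, centered at a horizontal geodesic $\Gamma$ foliate $(\mathbb{H}^2\times\mathbb{R})-\Gamma$, and then (Lemma~\ref{lemma:lag}) that each cylinder has intrinsic linear area growth, hence is parabolic. Cylindrical boundedness traps $S$ inside a region $\Omega$ foliated by the leaves $C_H\cap\Omega$, all of which have mean curvature strictly larger than $\frac12\ge H_0$, and then Mazet's halfspace theorem for foliations by parabolic CMC surfaces applies directly: its hypotheses (parabolicity, bounded geometry, quasi-isometric projection between leaves) concern only the \emph{explicit} leaves of the foliation, while from $S$ it needs nothing beyond properness. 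The conclusion that $S$ must be a leaf contradicts $H_0\le\frac12$. This foliation-plus-parabolicity input is the ingredient your sketch is missing; without it, or an equivalent substitute, the proposal does not close.
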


Existence in Theorem~\ref{thm:halfspace} is guaranteed by Theorem~\ref{thm:nodoids}, and nonexistence is a consequence of the fact that the family of equivariant $H$-cylinders $\Sigma_{0,H}^*\subset\h^2(\kappa)\times\R$ foliates $\mathbb{H}^2(\kappa)\times\mathbb{R}$ minus a horizontal geodesic when $H$ ranges from $\frac{\sqrt{-\kappa}}{2}$ to $+\infty$ (Lemma~\ref{lemma:foliation}). This key property enables the application of Mazet's halfspace theorem for parabolic $H$-surfaces~\cite{Mazet2013}. We would like to remark that cylindrical boundedness seems to be a sharp assumption in Theorem~\ref{thm:halfspace}: on the one hand, there do exist properly immersed $H$-surfaces in $\mathbb{M}^2(\kappa)\times\mathbb{R}$ with $4H^2+\kappa\leq 0$ lying in a slab between two horizontal slices, see~\cite{MTAJM}; on the other hand, there are $H$-surfaces in $\mathbb{H}^2(\kappa)\times\mathbb{R}$ with $4H^2+\kappa<0$ at bounded distance from a totally geodesic vertical plane, e.g., the equidistant vertical planes. Existence of such a surface in the critical case $4H^2+\kappa=0$ is not hitherto known.

\medskip

\noindent\textbf{Acknowledgement.} 
The authors are supported by the Spanish \textsc{micein} project \textsc{pid}2019-111531\textsc{ga}-\textsc{i}00 and \textsc{mineco} project \textsc{mtm}2017-89677-\textsc{p}. The first author is also supported by the University of Jaén research program \textsc{acción 10}; the second author is also supported by the Programa Operativo \textsc{feder} Andalucía 2014-2020, grant no.\ \textsc{e-fqm-309-ugr18}. The authors would like to thank L.\ Mazet for some valuable comments concerning the application of the halfspace theorem in~\cite{Mazet2013}.

\section{A foliation by horizontal $H$-cylinders}\label{sec:foliation}

Consider the $1$-parameter group of translations $\{\Phi_t\}_{t\in\mathbb{R}}$ along a given a horizontal geodesic $\Gamma\subset\mathbb{M}^2(\kappa)\times\mathbb{R}$, i.e., the $\Phi_t$ are hyperbolic translations if $\kappa<0$, Euclidean translations in $\kappa=0$, or rotations if $\kappa>0$. If $4H^2+\kappa>0$, there is a unique $H$-cylinder $C_H$ invariant under the action of $\{\Phi_t\}_{t\in\mathbb{R}}$, see~\cite{HH89,PR99} and also~\cite{Man12}. This is the surface $\Sigma_{0,H}^*$ that appears in Theorem~\ref{thm:nodoids}, but at this moment we are interested in the fact that $\{C_H:\frac{\sqrt{-\kappa}}{2}<H<+\infty\}$ produces a foliation when $\kappa<0$. This property is evident if $\kappa=0$ but fails if $\kappa>0$, see~\cite[Figure~2]{Manzano}. We will assume $\kappa=-1$ in the sequel after scaling the metric.

Consider the halfspace model of $\mathbb{H}^2\times\mathbb{R}$ given by $\{(x,y,z)\in\R^3:y>0\}$ endowed with the Riemannian product metric $y^{-2}(\df x^2+\df y^2)+\df z^2$. In this model, we can assume that $\Gamma=\{(0,y,0):y>0\}$ and $\Phi_t(x,y,z)=(e^tx,e^ty,z)$. The surface $P$ given by $x^2+y^2=1$ is a totally geodesic flat vertical plane, which can be parametrized isometrically as $(r,h)\mapsto(\tanh(r),\sech(r),h)$, where $r$ is the hyperbolic distance to $(1,0)$ in $\mathbb{H} ^2$ and $h$ is the projection onto the factor $\R$. Therefore, a regular surface invariant under $\Phi_t$ can be parametrized as
\begin{equation}\label{eqn:invariant-parametrization}
\phi(t,u)=\left(e^t\tanh(r(u)),e^t\sech(r(u)),h(u)\right),
\end{equation}
for some regular curve $\alpha_H(u) = (r(u), h(u))$ in the Euclidean $(r,h)$-plane. Given $H>\tfrac{1}{2}$, by either checking the corresponding \textsc{ode}, or by intersecting the $H$-cylinders in~\cite[Proposition~2.2]{Manzano} with $P$, one can easily verify that the surface $C_H$ corresponds to the choice
\begin{equation}\label{eqn:invariant-functions}
\begin{aligned}
r(u)&=\arctanh\left(\frac{\cos(u)}{2H}\right),\\
h(u)&=\frac{2H}{\sqrt{4H^2-1}}\arcsin\left(\frac{\sin(u)}{\sqrt{4H^2-\cos^2(u)}}\right).
\end{aligned}
\end{equation}
This parametrization is $2\pi$-periodic, and $u$ has been chosen such that the curve $\alpha_H$ has unit tangent vector $(-\sin u, \cos u)$. This follows from computing
\begin{equation}\label{eqn:invariant-angle-parameter}
  \alpha_H'(u) = \left( \frac{-2H\sin(u)}{4H^2-\cos^2(u)}, \frac{2H\cos(u)}{4H^2-\cos^2(u)} \right).
\end{equation}
Furthermore, $\alpha_H(u)$ is also convex in the $(r,h)$-plane since its Euclidean curvature with respect to the inward-pointing normal is
\begin{equation}\label{eqn:invariant-convexity}
\frac{r'(u)h''(u)-h'(u)r''(u)}{(r'(u)^2+h'(u)^2)^{3/2}}=\frac{4H^2-\cos^2(u)}{2H}>0,
\end{equation}
The surface $C_H$ is symmetric with respect to the totally geodesic surfaces $z=0$ and $x=0$, whose intersection is $\Gamma$, so we will say that $C_H$ is centered at $\Gamma$, see Figure~\ref{fig:foliation}. By means of the isometries of $\mathbb{H}^2\times\mathbb{R}$, we can find a unique family of horizontal $H$-cylinders centered at any horizontal geodesic $\Gamma\subset \mathbb{H}^2\times\mathbb{R}$.

The next two lemmas are directed to obtain two geometric conditions that will enable the application of the halfspace theorem~\cite[Theorem~7]{Mazet2013}, and the rest of its assumptions will be discussed directly in the proof of Theorem~\ref{thm:halfspace}.

\begin{lemma}\label{lm:foliation-CMC-Cylinders}
The family of horizontal $H$-cylinders $\{C_H:\frac{1}{2}<H<\infty\}$ centered at some horizontal geodesic $\Gamma\subset \mathbb{H}^2\times\mathbb{R}$ foliates $(\mathbb{H}^2\times\mathbb{R})-\Gamma$.
\end{lemma}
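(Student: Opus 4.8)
The plan is to reduce the statement to a foliation of a plane by the profile curves $\alpha_H$, and then to establish the nesting of these curves explicitly. First I would observe that the totally geodesic plane $P=\{x^2+y^2=1\}$ is a global cross-section of the flow $\{\Phi_t\}$: each orbit $t\mapsto\Phi_t(x_0,y_0,z_0)=(e^tx_0,e^ty_0,z_0)$ is a Euclidean ray at constant height $z_0$ that meets $P$ exactly once, and $\Gamma$ is precisely the orbit through $(0,1,0)$, which corresponds to the origin $(r,h)=(0,0)$ in the isometric coordinates of $P$. The action therefore trivialises $(\mathbb{H}^2\times\mathbb{R})\setminus\Gamma\cong\bigl(P\setminus\{(0,0)\}\bigr)\times\mathbb{R}$, under which every $\Phi_t$-invariant surface is a product $S\times\mathbb{R}$. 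Since $C_H$ is $\Phi_t$-invariant with $C_H\cap P=\alpha_H$, the family $\{C_H\}$ foliates $(\mathbb{H}^2\times\mathbb{R})\setminus\Gamma$ if and only if the profile curves $\{\alpha_H:\tfrac12<H<\infty\}$ foliate the punctured plane $\mathbb{R}^2\setminus\{(0,0)\}$.

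Next I would describe each curve. By~\eqref{eqn:invariant-convexity} and~\eqref{eqn:invariant-angle-parameter}, $\alpha_H$ is a strictly convex closed curve whose unit tangent rotates once as $u$ runs over $[0,2\pi]$; it is symmetric about both the $r$-axis ($u\mapsto-u$) and the $h$-axis ($u\mapsto\pi-u$) and encloses the origin, meeting the axes at $(\pm r_{\max},0)$ and $(0,\pm h_{\max})$ with $r_{\max}=\arctanh\tfrac{1}{2H}$ and $h_{\max}=\tfrac{2H}{\sqrt{4H^2-1}}\arcsin\tfrac{1}{2H}$. To compare different curves I would eliminate $u$ from~\eqref{eqn:invariant-functions}: writing $\cos u=2H\tanh r$ gives $4H^2-\cos^2u=4H^2\sech^2 r$, so the upper half $\{h\ge 0\}$ of $\alpha_H$ is the even graph
\[
h=h_H(r)=\frac{2H}{\sqrt{4H^2-1}}\,\arcsin\!\left(\frac{\sqrt{\cosh^2 r-4H^2\sinh^2 r}}{2H}\right),\qquad |r|\le r_{\max},
\]
with $h_H(\pm r_{\max})=0$ and $h_H(0)=h_{\max}$.

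The key step, and the main obstacle, is to prove that the curves are pairwise disjoint and nested. This I would deduce from two monotonicity facts that are immediate from the graph formula. The width $r_{\max}(H)=\arctanh\tfrac{1}{2H}$ is strictly decreasing in $H$; and for each fixed $r$ in the common range, $h_H(r)$ is strictly decreasing in $H$, being the product of two positive, strictly decreasing factors, namely $\tfrac{2H}{\sqrt{4H^2-1}}$ (whose derivative is $-2(4H^2-1)^{-3/2}<0$) and the $\arcsin$ term, whose squared argument $\tfrac{\cosh^2 r}{4H^2}-\sinh^2 r$ is manifestly decreasing in $H$. Hence for $H_1<H_2$ the upper graph of $\alpha_{H_2}$ lies strictly below that of $\alpha_{H_1}$ over its shorter $r$-interval, and by the double symmetry the whole convex curve $\alpha_{H_2}$ lies strictly inside $\alpha_{H_1}$; equivalently, the open convex regions $\Omega_H$ bounded by $\alpha_H$ are strictly decreasing in $H$.

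Finally I would check that these nested regions sweep out the punctured plane. As $H\to\tfrac12^+$ both $r_{\max}$ and $h_{\max}$ tend to $+\infty$, so $\Omega_H$ contains the diamond with vertices $(\pm r_{\max},0)$ and $(0,\pm h_{\max})$ and therefore $\bigcup_{H>1/2}\Omega_H=\mathbb{R}^2$; as $H\to+\infty$ both tend to $0$, so $\Omega_H$ is trapped in the box $[-r_{\max},r_{\max}]\times[-h_{\max},h_{\max}]$ and $\bigcap_{H>1/2}\Omega_H=\{(0,0)\}$. Since $H\mapsto\Omega_H$ is continuous and strictly monotone, every $p\neq(0,0)$ lies on exactly one curve $\alpha_{H^*}$, and the strict nesting provides the transversality making $(H,u)\mapsto\alpha_H(u)$ a diffeomorphism of $(\tfrac12,\infty)\times\mathbb{S}^1$ onto $\mathbb{R}^2\setminus\{(0,0)\}$. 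Pulling this foliation back by the flow $\{\Phi_t\}$ yields the desired foliation of $(\mathbb{H}^2\times\mathbb{R})\setminus\Gamma$ by the cylinders $C_H$.
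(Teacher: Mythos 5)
Your proposal is correct, and although it opens with the same reduction as the paper — using that $P$ is a global section of the flow $\{\Phi_t\}$, so the lemma amounts to the profile curves $\alpha_H$ foliating $\mathbb{R}^2\setminus\{(0,0)\}$ — the core of your argument is genuinely different. The paper proceeds by contradiction: if the family were not a foliation, two curves $\alpha_{H_1}$ and $\alpha_{H_2}$ would have to be tangent somewhere; since for every $H$ the parameter $u$ in \eqref{eqn:invariant-functions} is the tangent angle (the unit tangent is $(-\sin u,\cos u)$ by \eqref{eqn:invariant-angle-parameter}), a tangency forces $\alpha_{H_1}(u_0)=\alpha_{H_2}(u_0)$ at a \emph{common} parameter $u_0$; equality of the $r$-coordinates then forces $\cos u_0=0$, and injectivity of $H\mapsto h(\tfrac{\pi}{2})$ gives the contradiction. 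You instead eliminate the parameter, write each curve as the symmetric bigraph $h=\pm h_H(r)$, and check by direct differentiation that $h_H(r)$ is strictly decreasing in $H$, which yields strict nesting of the curves, so disjointness is automatic and no contradiction scheme is needed. Each route has its merits: the paper's tangency trick is shorter and requires no elimination of $u$, but it leaves implicit both the step ``not a foliation $\Rightarrow$ some pair of curves is tangent'' and the fact that the curves sweep out the whole punctured plane; your version is more computational but self-contained, makes the sweeping-out explicit ($\bigcup_H\Omega_H=\mathbb{R}^2$ and $\bigcap_H\Omega_H=\{(0,0)\}$ for the nested regions $\Omega_H$), and the same derivative estimate $\partial_H h_H<0$ is precisely what legitimizes your closing claim that $(H,u)\mapsto\alpha_H(u)$ is a local diffeomorphism (away from the $r$-axis, where one should graph over $h$ instead). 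Incidentally, your formula at $r=0$ gives $h_{\max}=\arcsin(x)/\sqrt{1-x^2}$ with $x=\tfrac{1}{2H}$, which is strictly \emph{decreasing} in $H$; the paper's proof calls this quantity strictly increasing — a harmless slip, since only its injectivity is used there — and your computation settles the correct monotonicity.
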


\begin{proof}
  We will assume that $\Gamma$ is the $y$-axis without losing generality, which reduces the problem to proving that the curves $\alpha_H(u)=(r(u),h(u))$ defined by~\eqref{eqn:invariant-functions} foliate $\R^2-\{(0,0)\}$ when $H$ ranges from $\frac{1}{2}$ to $+\infty$. Observe that $\alpha_H(u)$ is convex and its width and height in $\R^2$ diverge as $H\to\frac{1}{2}$, whilst it converges uniformly to $(0,0)$ as $H\to+\infty$. Therefore, the curves $\alpha_H$ for a large enough value of $H$ and for $H$ close to $\frac{1}{2}$ do not intersect. Let us reason by contradiction, assuming there is no such a foliation. Hence there exist $\frac{1}{2}<H_1<H_2<+\infty$ such that $\alpha_{H_1}$ and $\alpha_{H_2}$ are tangent at some point. Tangency implies that there is $u_0\in[0,2\pi]$ such that $\alpha_{H_1}(u_0)=\alpha_{H_2}(u_0)$ because of~\eqref{eqn:invariant-functions} and~\eqref{eqn:invariant-angle-parameter}. In particular, the value of $r(u_0)$ coincides for $H=H_1$ and $H=H_2$, i.e.,
\[\arctanh\left(\frac{\cos(u_0)}{2H_1}\right)=\arctanh\left(\frac{\cos(u_0)}{2H_2}\right).\]
Since $H_1<H_2$, we infer that $\cos(u_0)=0$, and hence we can assume by symmetry that $u_0=\frac{\pi}{2}$. Finally, taking into account that $h(u_0)=h(\frac{\pi}{2})=\frac{2H}{\sqrt{4H^2-1}}\arcsin(\frac{1}{2H})$ is a strictly increasing function of $H$, we conclude that it cannot give the same value for $H=H_1$ and $H=H_2$, and this gives the desired contradiction.
\end{proof}

\begin{lemma}\label{lemma:lag}
Horizontal $H$-cylinders have intrinsic linear area growth.
\end{lemma}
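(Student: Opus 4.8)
The plan is to compute the metric induced on $C_H$ by the parametrization~\eqref{eqn:invariant-parametrization} and to recognize it as a warped-product cylinder whose warping factor is bounded above and below by positive constants. First I would differentiate $\phi(t,u)=(e^t\tanh(r(u)),e^t\sech(r(u)),h(u))$ and compute the coefficients of the first fundamental form with respect to the ambient metric $y^{-2}(\df x^2+\df y^2)+\df z^2$, using that $y=e^t\sech(r(u))$, so that $y^{-2}=e^{-2t}\cosh^2(r(u))$. The exponential factors $e^{\pm 2t}$ cancel, and after simplifying with $\tanh^2+\sech^2=1$ one gets $\prodesc{\phi_t}{\phi_t}=\cosh^2(r(u))$ and $\prodesc{\phi_t}{\phi_u}=0$. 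For the remaining coefficient the key observation is that $u$ is the arc-length parameter of $\alpha_H$ by~\eqref{eqn:invariant-angle-parameter}, whence $\prodesc{\phi_u}{\phi_u}=r'(u)^2+h'(u)^2=1$. Thus the induced metric is the diagonal warped product
\begin{equation*}
\df s^2=\df u^2+\cosh^2(r(u))\,\df t^2=\df u^2+\frac{4H^2}{4H^2-\cos^2(u)}\,\df t^2,
\end{equation*}
where the last equality uses $\cosh(r(u))=(1-\tanh^2(r(u)))^{-1/2}=2H(4H^2-\cos^2 u)^{-1/2}$ read off from~\eqref{eqn:invariant-functions}.

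The second step is to record the elementary bounds on the warping factor. Since $\cos^2 u\in[0,1]$ and $H>\frac12$, we have $4H^2-1\leq 4H^2-\cos^2 u\leq 4H^2$, so that
\begin{equation*}
1\leq\cosh(r(u))\leq\frac{2H}{\sqrt{4H^2-1}}\qquad\text{for all }u.
\end{equation*}
In particular the area element is $\df A=\cosh(r(u))\,\df u\,\df t\leq\frac{2H}{\sqrt{4H^2-1}}\,\df u\,\df t$, and $C_H$ is topologically a cylinder $\R\times(\R/2\pi\Z)$ with non-compact $t$-direction.

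Finally I would confine intrinsic geodesic balls to a strip and integrate the area element there. Because the warping factor satisfies $\cosh^2(r(u))\geq 1$, the metric dominates $\df t^2$, so $\df s\geq\abs{\df t}$ along any path and hence $\abs{t(q)-t(p)}\leq d(p,q)$ for the intrinsic distance $d$ on $C_H$. Consequently any intrinsic ball $B_R(p)$ lies in the strip $\{\abs{t-t(p)}\leq R\}$, and integrating the area-element bound over this strip yields
\begin{equation*}
\Area(B_R(p))\leq\int_{\abs{t-t(p)}\leq R}\int_0^{2\pi}\frac{2H}{\sqrt{4H^2-1}}\,\df u\,\df t=\frac{8\pi H}{\sqrt{4H^2-1}}\,R,
\end{equation*}
which is linear in $R$ and establishes the claim.

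I do not expect a genuine obstacle here: the statement reduces to an explicit metric computation followed by a comparison argument. The only point requiring care is the identification of $\prodesc{\phi_u}{\phi_u}$ with $1$, which hinges on reading off from~\eqref{eqn:invariant-angle-parameter} that $u$ is precisely the arc-length parameter of the profile curve; the two-sided boundedness of the warping factor (which exhibits $C_H$ as bi-Lipschitz to a flat cylinder, and is what really drives the linear growth) then follows from $H>\frac12$ alone.
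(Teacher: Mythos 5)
Your overall strategy is sound and is essentially the paper's own argument in intrinsic form: the paper likewise confines the intrinsic ball $B_\rho(p_0)$ to the part of $C_H$ between the two totally geodesic vertical planes $\Phi_{-\rho}(P)$ and $\Phi_{\rho}(P)$ (equivalently, to the parameter strip $\abs{t}\leq\rho$) and then integrates the area element over that strip. However, your computation of the induced metric contains a concrete error: $u$ is \emph{not} the arc-length parameter of $\alpha_H$. Equation~\eqref{eqn:invariant-angle-parameter} gives
\[
\abs{\alpha_H'(u)}=\frac{2H}{4H^2-\cos^2(u)}\neq 1;
\]
the paper's phrase ``unit tangent vector $(-\sin u,\cos u)$'' means that the \emph{normalized} tangent points in that direction, i.e.\ $u$ is the turning-angle parameter, not arc length. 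Consequently $\prodesc{\phi_u}{\phi_u}=r'(u)^2+h'(u)^2=\frac{4H^2}{(4H^2-\cos^2(u))^2}$, the induced metric is
\[
\df s^2=\frac{4H^2}{(4H^2-\cos^2(u))^2}\,\df u^2+\frac{4H^2}{4H^2-\cos^2(u)}\,\df t^2,
\]
and the area element is $\frac{4H^2}{(4H^2-\cos^2(u))^{3/2}}\,\df u\,\df t$ --- which is exactly the integrand appearing in the paper's proof --- rather than $\cosh(r(u))\,\df u\,\df t$.

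The error is not fatal, because the two facts your argument actually uses survive the correction: first, $\prodesc{\phi_t}{\phi_t}=\cosh^2(r(u))\geq 1$ together with $\prodesc{\phi_t}{\phi_u}=0$ still gives $\abs{t(q)-t(p)}\leq d(p,q)$, so intrinsic balls lie in strips; second, since $4H^2-1\leq 4H^2-\cos^2(u)\leq 4H^2$ and $H>\frac12$, the correct area element is still bounded above by the constant $\frac{4H^2}{(4H^2-1)^{3/2}}$ depending only on $H$. Replacing your constant $\frac{2H}{\sqrt{4H^2-1}}$ accordingly yields $\Area(B_R(p))\leq\frac{16\pi H^2}{(4H^2-1)^{3/2}}\,R$, and the lemma follows by the same reasoning. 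But as written, the identification $\prodesc{\phi_u}{\phi_u}=1$, the displayed warped-product form of the metric, and the final constant are all incorrect and need this repair.
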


\begin{proof}
Assume that $C_H$ is centered at the $y$-axis, and consider $p_0=(0,1,h(\frac{\pi}{2}))\in C_H$, which also belongs to the vertical plane $P$ of equation $x^2+y^2=1$. Given $\rho>0$, the intrinsic metric ball $B_\rho(p_0)$ of $C_H$ centered at $p_0$ with radius $\rho$ is contained in the region of $C_H$ between two vertical planes at constant distance $\rho$ from $P$. This vertical slab is in turn contained in the slab between the two totally geodesic vertical planes $\Phi_{-\rho}(P)$ and $\Phi_\rho(P)$, which yields the estimate $\Area(B_\rho(p_0))\leq\Area(\phi([-\rho,\rho]\times[0,2\pi]))$, i.e.,
\[\Area(B_\rho(p_0))\leq\int_{-\rho}^\rho\left(\int_0^{2\pi}\frac{4H^2\df u}{(4H^2-\cos^2(u))^{3/2}}\right)\df t=2D\rho,\]
being $D>0$ the value of the integral in braces, which does not depend on $\rho$.
\end{proof}

\begin{proof}[Proof of Theorem~\ref{thm:halfspace}]
   If $4H^2+\kappa>0$, then existence follows from Theorem~\ref{thm:nodoids}. If $\kappa=H=0$, then nonexistence follows from Hoffman and Meeks' halfspace theorem for minimal surfaces in $\mathbb{R}^3$, see~\cite{HM1990}. Otherwise, we can assume, after rescaling the metric, that $\kappa=-1$, and argue by contradiction supposing the existence of a properly immersed $H_0$-surface $S\looparrowright\mathbb{H}^2\times\mathbb{R}$ with $0\leq H_0\leq\frac{1}{2}$, cylindrically bounded with respect to the $y$-axis in the halfspace model. Up to a vertical translation, we can also assume that $S\subset\mathbb{H}^2\times\mathbb{R}_+$, and consider the family of horizontal $H$-cylinders $C_H$ given by~\eqref{eqn:invariant-functions} centered at the horizontal geodesic $\Gamma=\{(0,y,-1):y>0\}$. Since they foliate $(\mathbb{H}^2\times\mathbb{R})-\Gamma$ by Lemma~\ref{lm:foliation-CMC-Cylinders} and $S$ is cylindrically bounded, there exists a cylinder $C_{H_{\text{out}}}$ for some $H_{\text{out}} > \frac{1}{2}$ such that $S\subset\Omega$, being $\Omega$ the intersection of $\mathbb{H}^2\times\mathbb{R}_+$ and the mean convex side of $C_{H_{\text{out}}}$. Thus $\Omega$ is foliated by the surfaces $C_H\cap\Omega$ with $H_{\text{out}}\leq H\leq H_{\text{in}}$ for some $H_{\text{in}}>\frac{1}{2}$ (see Figure~\ref{fig:foliation}).

  \begin{figure}[htbp]
    \centering
    \includegraphics{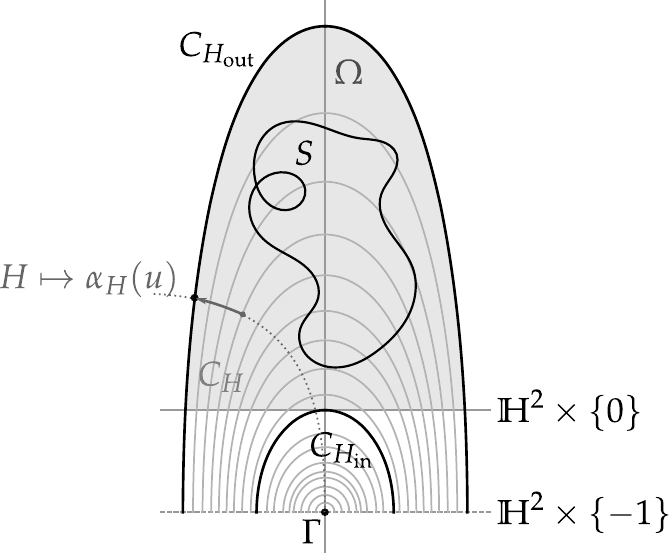}
    \caption{The foliation by horizontal cylinders $C_H$ centered at the geodesic $\Gamma = \{(0,y,-1)\colon y  >0\}$. The $H_0$-surface $S$ lies in the shaded region $\Omega$ contained in the mean convex side of $C_{H_\text{out}}$. The surface $C_{H_\text{in}}$ separates $\Omega$ and $\Gamma$. The curve $H\mapsto \alpha_H(u)$ in dotted lines represents the projection onto $C_{H_\text{out}}$.}
    \label{fig:foliation}
  \end{figure}

   Lemma~\ref{lemma:lag} yields the parabolicity of the leaves of the foliation. The surface $S$ lies in the mean convex side of the $C_{H_\text{out}}$ and has mean curvature $H_0 \leq \frac{1}{2}$, which is strictly less than the mean curvature of the leaves, with the right orientation in order to apply~\cite[Theorem~7]{Mazet2013}. This halfspace theorem implies that $S$ must be one of the cylinders in the foliation, and this is the contradiction we seek. It remains to verify that the foliation is regular in the sense of~\cite[Definition~5]{Mazet2013}. In the parametrization given by~\eqref{eqn:invariant-functions}, the shape operator $A$ of $C_H$ has norm
  \[
    \|A\|^2=4H^2-2\det(A)=\frac{3-16H^2+64H^4+4(1-4H^2)\cos(2u)+\cos(4u)}{16H^2},
  \]
  which is uniformly bounded for $H_{\text{out}}\leq H\leq H_{\text{in}}$. Moreover, the region $\Omega$ is geometrically bounded since $\mathbb{H}^2\times\mathbb{R}$ has bounded sectional curvature, so it suffices to show that there is a uniformly quasi-isometric projection sending all leaves $C_H\cap\Omega$ with $H_{\text{out}}\leq H\leq H_{\text{in}}$ onto $C_{H_{\text{out}}}\cap\Omega$. Note that all horizontal $H$-cylinders are $2\pi$-periodic in the parameter $u$ and they are invariant in the parameter $t$, so we can project onto $C_{H_{\text{out}}}\cap\Omega$ by just preserving the parameters $(t,u)$ given by~\eqref{eqn:invariant-parametrization}, i.e.\ by sending $\alpha_{H}(u)\mapsto\alpha_{H_{\text{out}}}(u)$ (see the dotted line in Figure~\ref{fig:foliation}). Periodicity in the parameter $u$ plus compactness of the interval $[H_{\text{out}},H_{\text{in}}]$ ensure this defines a uniformly quasi-isometric map.
\end{proof}

\section{The construction of horizontal nodoids}
\label{sec:conjugate-Plateau-technique}

This section is devoted to obtain the surfaces $\Sigma_{\lambda,H}^*$ of Theorem~\ref{thm:nodoids} as an extension of the construction of \emph{horizontal unduloids}. However, the arguments we will employ are significantly more involved than those in~\cite{MT}, since the fundamental piece is no longer a vertical graph for $\lambda>\frac\pi 2$. In the sequel we will omit the dependence on $H$, which will be fixed throughout the section.

\subsection{Conjugate immersions}\label{subsec:conjugate-Plateau-technique}
Given $\kappa,\tau\in\R$ such that $\kappa > 0$ and $\tau \neq 0$, the Berger sphere $\s^3_b(\kappa, \tau)$ is the usual $3$-sphere $\s^3 = \{(z, w) \in \C^2:\, \abs{z}^2 + \abs{w}^2 = 1\}$ equipped with the Riemannian metric
\[
  g(X, Y) = \tfrac{4}{\kappa}\bigl[\prodesc{X}{Y} + \bigl(\tfrac{4\tau^2}{\kappa} - 1\bigl)\prodesc{X}{V}\prodesc{Y}{V} \bigr],
\] 
where $\prodesc{\,}{\,}$ stands for the usual round metric in $\mathbb{S}^3$, and $V$ is the vector field defined by $V_{(z, w)} = (iz, iw)$. If $\kappa = 4\tau^2$, then $\s^3_b(4\tau^2, \tau)$ is a round sphere of constant sectional curvature $\tau^2$; otherwise, it is a homogeneous Riemannian manifold with isometry group of dimension $4$ (see~\cite[Section~2]{Torralbo} for more details). The Hopf fibration $\Pi: \s^3_b(\kappa, \tau) \rightarrow \s^2(\kappa) \subset \mathbb{R}^3$ given by $\Pi(z, w) = \frac{2}{\sqrt{\kappa}}\bigl(z\bar{w}, \frac{1}{2}(\abs{z}^2 - \abs{w}^2) \bigr)$ is a Riemannian submersion. The fibers of $\Pi$ are geodesics tangent to the unit Killing field $\widetilde\xi = \frac{\kappa}{4\tau}V$, and both the horizontal and vertical geodesics (with respect to $\Pi$) are great circles. We remark that the length of all vertical geodesics is $\frac{8\tau\pi}{\kappa}$, whereas the length of all horizontal geodesics is $\frac{4\pi}{\sqrt{\kappa}}$.

However, Berger spheres are not suitable for solving our Plateau problem (see Remark~\ref{rmk:polygon-case-Michigan}). We will use another space which is locally isometric to a Berger sphere but topologically different, namely the Riemannian three-manifold
\begin{equation}\label{eqn:metric-Ekt}
  M(\kappa,\tau) = \left( D_\kappa \times \mathbb{R}, \frac{\df x^2+\df y^2}{(1+\frac{\kappa}{4}(x^2+y^2))^2}+\left(\df z+\frac{\tau(y\df x-x\df y)}{1+\frac{\kappa}{4}(x^2+y^2)}\right)^2 \right),\quad 
\end{equation}
where $D_\kappa = \{(x, y) \in \mathbb{R}^2\colon 1 + \frac{\kappa}{4}(x^2 + y^2) > 0\}$, see~\cite{Daniel07}. There is a Riemannian covering map $\Theta:M(\kappa,\tau)\to\mathbb{S}^3_b(\kappa,\tau)-\{(e^{i \theta},0)\colon  \theta \in \mathbb{R}\}$, explicitly given by
\begin{equation}\label{eq:local-isometry-Daniel-Berger}
\begin{aligned}
\Theta(x, y, z) &= \frac{1}{\sqrt{1 + \tfrac{\kappa}{4}(x^2 + y^2))}} \left(\tfrac{\sqrt{\kappa}}{2}(x + iy) \exp(i \tfrac{\kappa}{4\tau}z), \exp(i \tfrac{\kappa}{4\tau} z)\right).\end{aligned}
\end{equation}
Hence $M(\kappa, \tau)$ is the universal cover of $\mathbb{S}^3_b(\kappa,\tau)$ minus a vertical fiber, and the lifted Hopf fibration $\Pi: M(\kappa,\tau) \to \mathbb{R}^{2}$ (also denoted by $\Pi$) acquires the simple form $\Pi(x, y, z) = (x, y)$. Although $M(\kappa, \tau)$ provides a unified model for all $\mathbb{E}(\kappa, \tau)$-spaces, it fails to be global or complete if $\kappa>0$.

In the discussion of the properties of conjugate surfaces we will make use of three types of minimal surfaces in Berger spheres as barriers: 
  \begin{itemize}
    \item The \emph{horizontal umbrella} centered at $p\in\mathbb{S}_b^3(\kappa,\tau)$ is the union of all horizontal geodesics going past $p$. Horizontal umbrellas are minimal spheres, but not every great sphere (that is, the intersection of a hyperplane of $\mathbb{R}^4$ with the $3$-sphere) is minimal with respect to the Berger metric. 

      The horizontal plane $z = \tfrac{4\tau}{\kappa} c$ in $M(\kappa, \tau)$ corresponds to a subset of the \emph{horizontal umbrella} centered at $(0, e^{ic})$ via~$\Theta$. 

    \item A \emph{Clifford torus} is the preimage of a geodesic of $\mathbb{S}^2(\kappa)$ by the Hopf fibration. Clifford tori have identically zero Gauss curvature, and they are the only minimal surfaces of $\mathbb{S}_b^3(\kappa,\tau)$ which are everywhere vertical.

   The vertical cylinders $(x -a)^2 + (y-b^2) = \frac{4}{\kappa} + (a^2 + b^2)$ in $M(\kappa, \tau)$, as well as the vertical planes $ax + by = 0$ containing the $z$-axis, are the minimal surfaces that correspond to Clifford tori via $\Theta$. 

  \item A \emph{spherical helicoid} is the minimal surface $\{(z, w) \in \mathbb{S}^3_b(\kappa, \tau)\colon \pIm(z \overline{w}^c) = 0\}$, $c \in [-1,1]$, obtained by moving a horizontal geodesic by a screw motion group of isometries along an intersecting vertical geodesic (see \cite[Section~4]{MT}). If $c = 0$, then the spherical helicoid is the minimal sphere $\pIm(z) = 0$, and if $c = 1$ then it is the Clifford torus $\pIm(z \overline{w}) = 0$.

  The Euclidean helicoid $\mathcal{H}_c = \{(x, y, z) \in \mathbb{R}^3\colon y = x \tan\left( \frac{\kappa}{4\tau}(c-1) z \right)\}$ corresponds to the spherical helicoid $\pIm(z \overline{w}^c) = 0$ via $\Theta$. 
\end{itemize}

Given a simply connected Riemannian surface $\Sigma$, as a particular case of Daniel sister correspondence, there is an isometric duality between minimal immersions $\widetilde{\phi}:\Sigma\to\mathbb{S}^3_b(4H^2+\kappa, H)$ and $H$-immersions $\phi:\Sigma\to\mathbb{M}^2(\kappa)\times\mathbb{R}$, as long as $H,\kappa\in\R$ satisfy $4H^2+\kappa>0$ and $H>0$. These immersions will be called \emph{conjugate} in the sequel and determine each other up to ambient isometries, see~\cite{Daniel07}. It should be noticed that the orientation of conjugate immersions must be compatible in the sense that there is a $\frac\pi2$-rotation $J$ in $T\Sigma$ such that $\{\df\widetilde\phi_p(v),\df\widetilde\phi_p(Jv),\widetilde N_p\}$ and $\{\df\phi_p(v),\df\phi_p(Jv), N_p\}$ are positively oriented in $\mathbb{S}^3_b(4H^2+\kappa, H)$ and $\mathbb{M}^2(\kappa)\times\mathbb{R}$, respectively, for all nonzero tangent vectors $v\in T_p\Sigma$. Here $\widetilde{N}$ is the unit normal to $\widetilde\phi$ with respect to which the mean curvature is computed and $N$ is the unit normal to $\phi$ defining the same \emph{angle function} $\nu\in\mathcal C^\infty(\Sigma)$, i.e., $\nu=\langle N,\xi\rangle = \langle\widetilde{N},\widetilde\xi\rangle$, where $\xi=\partial_t$ is the unit Killing vector field in $\mathbb{M}^2(\kappa)\times\R$ in the positive direction of the factor $\R$. In this particular case of Daniel correspondence, the $\frac\pi2$-rotation actually reflects some extrinsic geometric behaviour (see~\cite{Daniel07,MT,Plehnert2}):
\begin{itemize}
  \item The tangential projections $T = \xi - \nu N$ and $\widetilde{T} = \widetilde\xi - \nu \widetilde{N}$ of the unit Killing vector fields are intrinsically rotated by $\frac\pi2$, i.e., $\df \phi^{-1}(T) = J\df \widetilde\phi^{-1}(\widetilde{T})$, as well as the shape operators $S$ and $\widetilde{S}$ of the immersions  are related by $S = J \widetilde{S}$.

  \item Any horizontal or vertical geodesic curvature line in the initial surface becomes a plane line of symmetry in the conjugate one. Therefore, given a curve $\alpha$ in $\Sigma$, if $\widetilde{\phi}\circ\alpha$ is a horizontal (resp.\ vertical) geodesic, then $\phi \circ \alpha$ is contained in a totally geodesic vertical (resp.\ horizontal) surface, which the immersion meets orthogonally~\cite[Lemma~1]{MT}.
\end{itemize}
For the sake of simplicity, in the sequel we will use the notation $\widetilde\Sigma$ and $\Sigma$ for conjugate (immersed) surfaces. The surface $\widetilde\Sigma$ will be the solution of a Plateau problem over a geodesic polygon in $\mathbb{S}^3_b(4H^2+\kappa, H)$ consisting of vertical and horizontal geodesic segments making right angles at the vertexes. This guarantees that $\widetilde\Sigma$ can be smoothly extended across its boundary by successive axial symmetries about such geodesic segments to produce a complete smooth minimal immersion $\widetilde\Sigma^*$. Similarly, the conjugate immersed $H$-surface $\Sigma$ can be extended to a complete $H$-surface $\Sigma^*$ by means of mirror symmetries about totally geodesic horizontal and vertical planes in $\mathbb{M}^2(\kappa)\times\mathbb{R}$ containing the boundary components, see~\cite{MT}.

\subsection{Solving the Plateau problem}\label{subsec:plateau}
Assume that $H,\kappa\in\mathbb{R}$ are such that $H>0$ and $4H^2+\kappa>0$. For each $\lambda \ge 0$, consider the closed polygon $\widetilde\Gamma_\lambda\subset M(4H^2 + \kappa, H)$, consisting of three horizontal geodesics $\widetilde{h}_0$, $\widetilde{h}_1$ and $\widetilde{h}_2$, and one vertical geodesic $\widetilde v$, parametrized by the following expressions:
\[
\begin{aligned}
  \widetilde{h}_0(s)&= \left( \tfrac{2}{\sqrt{4H^2+\kappa}} \tfrac{\cos(2s)}{1 + \sin(2s)}, 0, 0\right)  ,& s &\in \bigl[0, \tfrac\pi2\bigr], \\
  \widetilde{h}_1(s)&=\left( \tfrac{2}{\sqrt{4H^2+\kappa}} \sin(2s), \tfrac{2}{\sqrt{4H^2+\kappa}} \cos(2s), \tfrac{4H}{4H^2+\kappa}\left(s - \tfrac{\pi}{4} \right)  \right) ,& s&\in\bigl[\tfrac\pi4, \tfrac\lambda2\bigr], \\
  \widetilde{h}_2(s)&=\left( \tfrac{2}{\sqrt{4H^2+\kappa}} \sin(2s), \tfrac{2}{\sqrt{4H^2+\kappa}} \cos(2s), \tfrac{4H}{4H^2+\kappa} (s + \tfrac{\pi}{4}) \right) ,& s&\in\bigl[-\tfrac\pi4, \tfrac\lambda2\bigr], \\
  \widetilde{v}(s)&= \left( \tfrac{2}{\sqrt{4H^2+\kappa}} \sin (\lambda), \tfrac{2}{\sqrt{4H^2+\kappa}} \cos (\lambda), \tfrac{4H}{4H^2+\kappa} ( s + \tfrac{\lambda}{2} - \tfrac{\pi}{4} )  \right) ,& s &\in \bigl[0, \tfrac\pi2\bigr].
\end{aligned}
\]
By abuse of the notation, $\widetilde{h}_0$, $\widetilde{h}_1$, $\widetilde{h}_2$, and $\widetilde v$ will be often treated as sets rather than parametrizations in the sequel. Moreover, $\widetilde\Gamma_\lambda$ is a geodesic quadrilateral whose vertexes will be labeled as $\widetilde 1$, $\widetilde 2$, $\widetilde 3$ and $\widetilde 4$, as shown in Figure~\ref{fig:polygon-Berger}. 

\begin{figure}[htbp]
\centering
\includegraphics{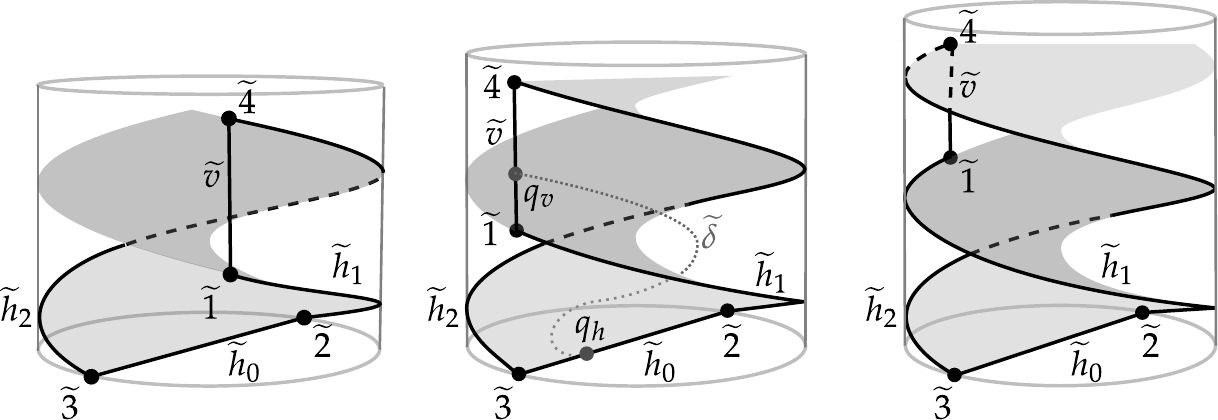}
\caption{A faithful representation of the polygon $\widetilde{\Gamma}_\lambda$ for different values of $\lambda$. The barriers $T$ (vertical cylinder) and $S$ (helicoid) demarcate the mean convex solid $\Omega$. The dotted line (see central figure) represents the curve $\widetilde{\delta}$ of zeroes of the angle function defined in Proposition~\ref{prop:angle}.}
\label{fig:polygon-Berger}
\end{figure}

\begin{remark}\label{rmk:polygon-case-Michigan}
  For each $\lambda \in [0, \frac{\pi}{2}]$, the polygon $\Theta(\widetilde{\Gamma}_\lambda)\subset\mathbb{S}^3_b(4H^2+\kappa,H)\subset\C^2$ is, up to the isometry $(z, w) \mapsto \frac{1}{\sqrt{2}}\bigl(e^{-i\frac{\pi}{4}}(z + i w), e^{i \frac{\pi}{4}}(z - i w)\bigr)$, the same as in the construction of the horizontal unduloids~\cite[Secion~5.1]{MT}. However, the barriers we used to solve the Plateau problem in~\cite{MT} are no longer valid if $\lambda>\frac12$. Furthermore, the polygon $\Theta(\widetilde{\Gamma}_\lambda)$ has self-intersections if $\lambda\geq\frac{7\pi}{2}$, so the resulting Plateau problem is ill-posed in $\mathbb{S}^3_b(4H^2+\kappa,H)$, and this is the reason why we use the locally isometric model $M(4H^2+\kappa,H)$ throughout this section.
\end{remark}

Let $T$ be the vertical minimal cylinder that corresponds to the Clifford torus $|z|^2=|w|^2$ in $\mathbb S^3\subset\mathbb C^2$, and let $S$ be the minimal helicoid $\mathcal H_{-1}$. In the model $M(4H^2+\kappa,H)$, these surfaces are given by
\begin{equation*}
\begin{aligned}
 T&=\left\{(x,y,z)\in\R^3\colon x^2 + y ^2 = \tfrac{4}{4H^2+\kappa}\right\}\\
 S&=\left\{(x, y, z) \in \mathbb{R}^3\colon y = -x\tan\left(\tfrac{4H^2+\kappa}{2H} z\right)\right\}.
\end{aligned}
\end{equation*}
The interior domain of the cylinder will be denoted by $G$, and it is divided by $S$ in two connected components. The closure of the component that contains $\widetilde{v}$ is
\[
  \Omega = \left\{(x, y, z)\in\mathbb R^3\colon x^2 + y^2 \leq \tfrac{4}{4H^2+\kappa},\, y \geq -x \tan\left(\tfrac{4H^2+\kappa}{2H}z\right)\right\},
\] 
and satisfies that $\widetilde{\Gamma}_\lambda \subset \partial{\Omega}$ for any $\lambda \ge 0$. Besides, $\Omega$ is a mean-convex solid in the sense of Meeks and Yau~\cite{MY82} so the Plateau problem with boundary $\widetilde\Gamma_\lambda$ can be solved. This produces an embedded closed minimal disk $\widetilde\Sigma_{\lambda}\subset\Omega$ with boundary $\partial\widetilde\Sigma_{\lambda}=\widetilde\Gamma_\lambda$, which will play the role of the initial minimal surface in the conjugate construction. We will show in Proposition~\ref{prop:uniqueness} that the solution $\widetilde{\Sigma}_{\lambda}$ is unique and hence depends continuously on $\lambda$.

We highlight the following special cases, depicted in {Figure~\ref{fig:extremal-cases}:
\begin{itemize}
  \item If $\lambda = 0$, then $\Theta(\widetilde{\Sigma}_{0})$ is part of the spherical helicoid $\pIm(z^2 + w^2) = 0$.
  \item If $\lambda = \frac{\pi}{2}$, then $\Theta(\widetilde{\Sigma}_{\frac{\pi}{2}})$ is part of the minimal sphere $\pIm(z - w) = 0$.
  \item If $\lambda = \frac{3\pi}{2}$, then $\Theta(\widetilde{\Sigma}_{\frac{3\pi}{2}})$ is part of (the Berger-sphere version of) Lawson's Klein bottle $\eta_{1,1}\subset\s^3$, see~\cite[Theorem 2]{Torralbo}. 
\end{itemize}

\begin{figure}[htbp]
  \centering
  \includegraphics{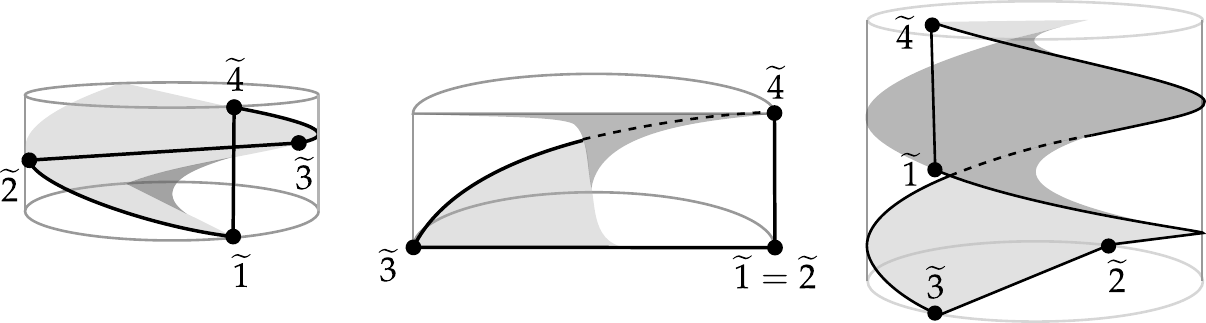}
  \caption{From left to right: polygon $\widetilde{\Gamma}_\lambda$ for $\lambda = 0$ ($\widetilde{\Sigma}_0$ is a spherical helicoid with axis $\widetilde{14}$), $\lambda = \frac{\pi}{2}$ ($\widetilde{\Sigma}_{\frac{\pi}{2}}$ is the horizontal umbrella centered at $\widetilde{3}$) and $\lambda = \frac{3\pi}{2}$ where $\widetilde{3}$, $\widetilde{1}$ and $\widetilde{4}$ are on the same vertical geodesic ($\widetilde{\Sigma}_{\frac{3\pi}{2}}$ is the Lawson's Klein bottle $\eta_{1,1}$). 
  }
  \label{fig:extremal-cases}
\end{figure}

\begin{remark}\label{rmk:round-sphere1}
If $\kappa=0$, the Berger sphere $\mathbb S_b^3(4H^2+\kappa,H)$ is the three-sphere $\mathbb{S}^3(H^2)$ of constant sectional curvature $H^2$, and the lengths of the geodesic segments $\widetilde{14}$ and $\widetilde{23}$ coincide (they are a quarter of the length of a great circle of $\mathbb{S}^3(H^2)$). The completion of $\widetilde\Sigma_{\lambda}$ is invariant under a continuous $1$-parameter family of screw motions, which are composition of translations and suitable rotations about $\widetilde{34}$. In particular, $\widetilde\Sigma_{\lambda}$ is an equivariant minimal surface if $\kappa=0$.  However, if $\kappa\neq 0$, this argument fails since there are no screw motions with axis $\widetilde{34}$, and the geodesic arcs $\widetilde{14}$ and $\widetilde{23}$ (see Figure~\ref{fig:extremal-cases}) have different lengths.
\end{remark}

\subsection{The analysis of the angle function}
Next proposition gives some insight into the behaviour of the angle function $\nu_\lambda: \widetilde\Sigma_{\lambda} \to [-1,1]$, whose sign is chosen such that $\nu_\lambda(\widetilde{2})=1$. It will be fundamental in the study of the conjugate surface.

\begin{proposition}\label{prop:angle} 
Let $\widetilde{\Sigma}_\lambda$ be the compact minimal disk spanning $\widetilde{\Gamma}_\lambda$ with $\lambda>\frac{\pi}{2}$, and consider the angle function $\nu_\lambda$ of $\widetilde\Sigma_\lambda$ such that $\nu_\lambda(\widetilde 2) = 1$.
\begin{enumerate}[label=(\alph*)]
  \item The only points in which $\nu_\lambda$ takes the values $1$ and $-1$ are $\widetilde 2$ and $\widetilde 3$, respectively.

  \item \label{prop:angle:item:zeroes-angle} The set of points in which $\nu_\lambda$ vanishes consists of $\widetilde v$ and a certain interior regular curve $\widetilde{\delta}\subset\widetilde\Sigma_\lambda$ with endpoints in $\widetilde v$ and $\widetilde h_0$ (see Figure~\ref{fig:polygon-Berger}).

  \item Given $p\in\widetilde h_0((0,\frac{\pi}{2}))\cup\widetilde h_1((\frac{\pi}{4},+\infty))\cup\widetilde h_2((\frac{-\pi}{4},+\infty))$, the function $\lambda\mapsto\nu_\lambda(p)$ is continuous in the interval where it is defined.
  \begin{itemize}
    \item It is strictly increasing (possibly changing sign) if $p \in\widetilde h_0((0,\frac{\pi}{2}))$.
    \item It is positive and strictly increasing if $p \in\widetilde h_1((\frac{\pi}{4},+\infty))$.
    \item It is negative and strictly decreasing if $p \in\widetilde h_2((\frac{-\pi}{4},+\infty))$. 
  \end{itemize}
\end{enumerate}
\end{proposition}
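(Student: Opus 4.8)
The whole analysis rests on the fact that the angle function is a Jacobi field. Since $\widetilde\xi$ is a unit Killing field of $M(4H^2+\kappa,H)$ and $\widetilde\Sigma_\lambda$ is minimal, $\nu_\lambda=\langle\widetilde N,\widetilde\xi\rangle$ lies in the kernel of the stability operator, i.e.\ $\Delta\nu_\lambda+(\lVert\widetilde S\rVert^2+\Ric(\widetilde N))\,\nu_\lambda=0$, see~\cite{Daniel07}. I would begin by reading off the boundary data of $\nu_\lambda$ from the geometry of $\widetilde\Gamma_\lambda$: the field $\widetilde\xi$ is tangent to $\widetilde\Sigma_\lambda$ along the vertical geodesic $\widetilde v$, so $\nu_\lambda\equiv 0$ on $\widetilde v$ (and hence $\nu_\lambda(\widetilde 1)=\nu_\lambda(\widetilde 4)=0$); the tangent plane is horizontal at the vertexes $\widetilde 2,\widetilde 3$, where two horizontal geodesics meet orthogonally, so there $\nu_\lambda=\pm1$, the sign being fixed by the normalization $\nu_\lambda(\widetilde 2)=1$. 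It is convenient to record that $\lvert\nu_\lambda\rvert\le 1$, with equality exactly at the zeros of the tangential projection $\widetilde T=\widetilde\xi-\nu_\lambda\widetilde N$, which is the natural object to track in item (a).

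For item (a) the plan is to show that $\widetilde T$ has no interior zero. Since $\widetilde\Sigma_\lambda$ is not a horizontal umbrella when $\lambda>\frac\pi2$ (umbrellas occur only for $\lambda=\frac\pi2$), the field $\widetilde T$ does not vanish identically, and the holomorphic structure underlying minimal surfaces in $\E(\kappa,\tau)$ makes its zeros isolated and, as I would verify, of negative index (in the space-form model $\tau=0$ these are the saddles of the harmonic height function, and the same sign persists here). A Poincaré--Hopf count for $\widetilde T$ on the disk $\widetilde\Sigma_\lambda$, whose total index must equal $\chi(\widetilde\Sigma_\lambda)=1$, together with the behaviour of $\widetilde T$ along $\widetilde\Gamma_\lambda$ (which already accounts for the two boundary zeros at $\widetilde 2$ and $\widetilde 3$), then leaves no interior zero, so $\{\nu_\lambda=\pm1\}=\{\widetilde 2,\widetilde 3\}$. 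The delicate point is the bookkeeping of the boundary zeros at the vertexes, where $\widetilde T$ vanishes and the index must be computed through the symmetric extension of $\widetilde\Sigma_\lambda$.

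Item (b) concerns the zero set of $\nu_\lambda$, i.e.\ the points of vertical tangency. Along $\widetilde v$ we already have $\nu_\lambda\equiv 0$. For the interior zero locus I would follow the approach announced in Section~\ref{sec:geometry} and compare $\widetilde\Sigma_\lambda$ against the minimal surfaces with $\nu\equiv 0$, namely the Clifford tori (the vertical cylinders and the vertical planes in $M(4H^2+\kappa,H)$): an interior zero of $\nu_\lambda$ is a point where a Clifford torus is tangent to $\widetilde\Sigma_\lambda$, and the maximum principle against this family both forbids closed interior nodal loops and constrains the shape of the nodal set. Combined with the classical nodal regularity for the equation $\Delta\nu_\lambda+(\lVert\widetilde S\rVert^2+\Ric(\widetilde N))\nu_\lambda=0$ (the zero set is a union of $C^1$ arcs meeting equiangularly at the common zeros of $\nu_\lambda$ and $\nabla\nu_\lambda$), with the stability of the minimizing disk $\widetilde\Sigma_\lambda$ limiting the number of nodal domains, and with the edge signs ($\nu_\lambda>0$ on the interior of $\widetilde h_1$, $\nu_\lambda<0$ on the interior of $\widetilde h_2$, and a single sign change along $\widetilde h_0$), this forces that, besides $\widetilde v$, the zero set is a single interior regular arc $\widetilde\delta$ joining $\widetilde v$ to $\widetilde h_0$.

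Finally, continuity of $\lambda\mapsto\nu_\lambda(p)$ in item (c) is inherited from the continuous dependence of $\widetilde\Sigma_\lambda$ on $\lambda$ provided by the uniqueness in Proposition~\ref{prop:uniqueness}. For strict monotonicity I would compare $\widetilde\Sigma_{\lambda_1}$ and $\widetilde\Sigma_{\lambda_2}$ for $\lambda_1<\lambda_2$: they share the edge $\widetilde h_0$ and an initial subarc of $\widetilde h_1$ and $\widetilde h_2$, and once these surfaces are shown to be suitably ordered near the common boundary, a Hopf boundary-derivative estimate orders their conormals and hence their angle functions, giving $\nu_{\lambda_1}(p)<\nu_{\lambda_2}(p)$ on $\widetilde h_0$ and $\widetilde h_1$ and the reversed inequality on $\widetilde h_2$ (the asymmetry stemming from the normalization at $\widetilde 2$ rather than $\widetilde 3$); equivalently, one differentiates the family in $\lambda$ and controls the sign of the resulting Jacobi field. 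I expect this monotonicity to be the genuine obstacle, since it requires first establishing the geometric ordering of the surfaces near the shared boundary and then transferring it to the angle function; the interior nodal analysis of (b) is the other substantial step, whereas the Jacobi-field identity and the local maximum-principle arguments are comparatively routine.
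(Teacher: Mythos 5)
Your treatment of item (a) contains a genuine gap: the key claim that interior zeros of $\widetilde T$ are isolated \emph{of negative index} is false in Berger spheres, and the product-space intuition you invoke does not transfer. By Daniel's structure equations, the linearization of $\widetilde T$ at one of its zeros (where $\nu_\lambda=\pm1$) is $\pm(\widetilde S-\tau J)$, whose determinant equals $\tau^2-\tfrac12\|\widetilde S\|^2$ and has no a priori sign; the index of the zero is $+1$ wherever $\|\widetilde S\|^2<2\tau^2$. A concrete counterexample to your sign claim is the horizontal umbrella itself: it is a minimal sphere on which $\widetilde T$ vanishes exactly at the two poles (where $\widetilde S=0$), each zero having index $+1$, consistently with Poincaré--Hopf on $\mathbb{S}^2$. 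The dichotomy you rely on --- ``either $T\equiv0$ or all zeros are saddles of the harmonic height function'' --- is a $\tau=0$ phenomenon: for $\tau\neq0$ there is no height function and no horizontal surfaces, $\widetilde T$ never vanishes identically on any surface, and the umbrellas are precisely the surfaces whose horizontal points carry positive-index zeros. Consequently your index count cannot exclude interior points with $\nu_\lambda^2=1$: an interior zero of index $+1$ is perfectly compatible with total index $\chi=1$. The paper rules out such points by an entirely different mechanism: tangency with the umbrella $U_p$ forces at least two transversal intersection curves, and chasing these curves to $\widetilde\Gamma_\lambda$ produces a closed polygon of horizontal geodesics projecting injectively to the base $\mathbb{S}^2(4H^2+\kappa)$, which is impossible because the bundle curvature is nonzero (\cite[Proposition~3.3]{Man14}). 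Some substitute for this holonomy obstruction is indispensable; without it item (a) remains unproved, and note that items (b) and (c) both use (a).

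For item (b) your outline (tangent Clifford tori, nodal regularity, exclusion of closed nodal loops, edge signs) is the paper's strategy in spirit, but the substance lies in steps you do not address: one must prove there is \emph{exactly one} zero of $\nu_\lambda$ on $\widetilde h_0$, \emph{exactly one} point of $\widetilde v$ with $\nabla\nu_\lambda=0$, and no interior points where $\nu_\lambda$ and $\nabla\nu_\lambda$ vanish simultaneously. The paper does this through Lemma~\ref{lemma:foliation} (no closed curves in $\widetilde\Sigma_\lambda\cap T_p$, proved with an open-book foliation by vertical cylinders), Lemma~\ref{lemma:intersection} (vanishing derivatives of $\nu_\lambda$ raise the order of contact with the tangent Clifford torus, hence the number of intersection curves), and a pigeonhole analysis of which edges of $\widetilde\Gamma_\lambda$ the vertical quadrilateral $T_p$ can meet; none of this is routine. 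Your item (c), by contrast, coincides with the paper's proof: continuity comes from the uniqueness in Proposition~\ref{prop:uniqueness}, and monotonicity from the fact that $\widetilde\Sigma_{\lambda_2}$ is a barrier bounding a mean-convex region whose boundary contains $\widetilde\Gamma_{\lambda_1}$, so the two surfaces are ordered along the common boundary and the boundary maximum principle compares the angle functions there. You have, if anything, inverted where the difficulty lies: (c) is the comparatively routine step, while (b) and the umbrella/holonomy argument in (a) carry the weight of the proposition.
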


The proof of items (a) and (b) essentially relies on comparing $\widetilde\Sigma_\lambda$ with two types of surfaces, $U_p$ and $T_p$, tangent to $\widetilde\Sigma_\lambda$ at some $p\in\widetilde\Sigma_\lambda$. On the one hand, if $\nu_\lambda(p) ^2=1$, consider the umbrella $U'_p$ tangent to $\widetilde{\Sigma}_{\lambda}$ at $p$, and define $U_p$ as the closure of the connected component of $U'_p\cap G$ that contains $p$. The interior of $U_p$ is a vertical graph in $G$, and if $\widetilde v$ lies in $\partial U_p\subset T$ then $\Pi(p)$ and $\Pi(\widetilde v)$ are opposite points of the great circle $\Pi(T)\subset\R^2$ and $p \in \partial \widetilde{\Gamma}_{\lambda}$. On the other hand, if $\nu_\lambda(p)=0$, consider the Clifford torus $T'_p$ tangent to $\widetilde\Sigma_\lambda$ at an interior point $p$, and define $T_p$ as the closure of the connected component of $(T'_p\cap G)-S$ containing $p$. Note that $T_p$ is a vertical quadrilateral with boundary in $S\cup T$: three of its sides lie in $S$ if $T_p'$ contains the $z$-axis (see Figure~\ref{fig:intersection-helicoid-torus} center), otherwise only two of the sides lie in $S$ (see Figure~\ref{fig:intersection-helicoid-torus} left and right). 

\begin{figure}[htbp]
  \centering
  \includegraphics{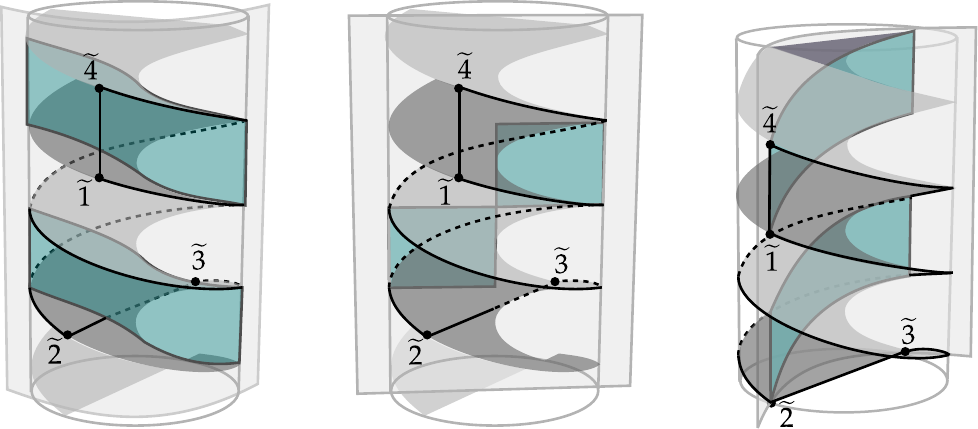}
  \caption{Each figure indicates two possible connected components $T_p$ (in turquoise) for a Clifford torus $T_p'$ inside the mean convex solid $\Omega$. From left to right: a general case, a case in which $T_p'$ contains the axis of $S$, and a case in which $T_p'$ contains $\widetilde{v} = \widetilde{14}$.}
  \label{fig:intersection-helicoid-torus}
\end{figure}

\begin{lemma}\label{lemma:foliation}
  Assume that $p\in\widetilde\Sigma_\lambda$ is an interior point with $\nu_\lambda(p)=0$. Then the intersection of $\widetilde\Sigma_\lambda$ and the vertical quadrilateral $T_p$ does not contain any closed curve.  
\end{lemma}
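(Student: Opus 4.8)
The plan is to argue by contradiction, producing from a closed intersection curve a subdisk of $\widetilde\Sigma_\lambda$ that is forced to lie inside a single Clifford torus; this makes $\widetilde\Sigma_\lambda$ everywhere vertical, hence itself a Clifford torus, which is absurd since $\nu_\lambda(\widetilde 2)=1$. Since $T_p\subset T'_p$ and $T'_p$ is a Clifford torus, any closed curve in $\widetilde\Sigma_\lambda\cap T_p$ is a closed curve in $\widetilde\Sigma_\lambda\cap T'_p$, so it suffices to rule out closed curves in $\widetilde\Sigma_\lambda\cap T'_p$. Suppose then that $\gamma$ is an embedded simple closed curve contained in $\widetilde\Sigma_\lambda\cap T'_p$. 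As $\widetilde\Sigma_\lambda$ is a topological disk, $\gamma$ bounds a subdisk $\Delta\subset\widetilde\Sigma_\lambda$ by the Jordan--Schoenflies theorem. I stress that the tangency of $T'_p$ at $p$ will not be used; only the fact that $T'_p$ is a Clifford torus matters.

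The core tool will be a foliation by Clifford tori combined with the maximum principle. Recall that the vertical planes $\{ax+by=0\}$ through the $z$-axis are all minimal and correspond to Clifford tori in $M(4H^2+\kappa,H)$, and that they foliate $G$ minus the $z$-axis like the pages of a book, the axis being the only singular locus. First I would apply an ambient isometry (the lift of a rotation of $\mathbb{S}^2(4H^2+\kappa)$) carrying $T'_p$ to one such vertical plane, so that $\gamma$ comes to lie in a leaf of this book foliation. Using a rotation retains the freedom to prescribe which point of the geodesic $\Pi(T'_p)$ is sent to the centre, and this freedom is spent to keep the axis off $\Delta$. Indeed, $\mathrm{int}\,\widetilde\Sigma_\lambda$ projects into the open hemisphere $\Pi(G)$: were an interior point of $\widetilde\Sigma_\lambda$ to meet the minimal cylinder $T=\partial G$, it would do so tangentially from the mean-convex side (as $\widetilde\Sigma_\lambda\subset\overline G$ cannot cross $T$), and the maximum principle would force $\widetilde\Sigma_\lambda\subset T$, which is impossible for a disk. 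Hence $\Pi(\mathrm{int}\,\Delta)$ is a compact subset of an open hemisphere, and one can choose the rotation so that the image of the axis does not meet $\mathrm{int}\,\Delta$.

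With the foliation fixed, let $s$ denote the leaf parameter, a function defined and smooth on a neighbourhood of the leaf containing $\gamma$, off the axis, with $\nabla s\neq 0$ there. By construction $s$ is smooth on $\mathrm{int}\,\Delta$, it vanishes identically on $\gamma=\partial\Delta$ (which sits in the leaf $s=0$), and it is not identically zero on $\Delta$, for otherwise $\Delta$, and then $\widetilde\Sigma_\lambda$ by unique continuation, would be contained in a single Clifford torus. Therefore $s|_\Delta$ attains a nonzero extremum at an interior point $q_0\in\mathrm{int}\,\Delta$, which is a smooth interior point of $\widetilde\Sigma_\lambda$. At $q_0$ the leaf $\{s=s(q_0)\}$ is tangent to $\Delta$, and $\Delta$ lies locally on one side of it; since both surfaces are minimal, the interior maximum principle together with unique continuation gives $\Delta\subset\{s=s(q_0)\}$, a Clifford torus. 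Consequently $\widetilde\Sigma_\lambda$ is everywhere vertical, hence a Clifford torus, contradicting $\nu_\lambda(\widetilde 2)=1$. This proves the lemma.

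The step I expect to be most delicate is guaranteeing that the book foliation actually covers $\Delta$, that is, that the axis can be pushed off $\mathrm{int}\,\Delta$; this is precisely where the estimate that $\Pi(\mathrm{int}\,\widetilde\Sigma_\lambda)$ lies in an \emph{open} hemisphere is essential, since it confines $\Pi(\Delta)\cap\Pi(T'_p)$ to a proper arc of the geodesic $\Pi(T'_p)$ and leaves room to place the centre outside it. A secondary point to treat carefully is the reduction to an embedded simple closed curve $\gamma$ bounding an honest subdisk of $\widetilde\Sigma_\lambda$ (using that the intersection is a finite one-complex away from the equiangular crossing at $p$), together with the verification that the interior extremum $q_0$ is a smooth interior point where the maximum principle is licit.
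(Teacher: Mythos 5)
Your core mechanism (open-book foliation by Clifford tori plus the interior maximum principle, forcing a subdisk into a single leaf) is exactly the engine of the paper's final step, and your argument is essentially correct \emph{for closed curves lying in the interior of $\widetilde\Sigma_\lambda$}. But there is a genuine gap: the lemma must exclude \emph{all} closed curves in $T_p\cap\widetilde\Sigma_\lambda$, including those passing through the boundary $\widetilde\Gamma_\lambda$, and these are precisely the curves for which the lemma is later invoked. In Claims 2--5 of the proof of Proposition~\ref{prop:angle}, the closed curves are built from interior intersection arcs together with segments of $\widetilde v$ or of $\widetilde h_0$ (note that $\widetilde v\subset T_p$ whenever $T'_p\supset\widetilde v$, and half of $\widetilde h_0$ lies in $T_p$ in the Claim~2 configuration). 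For a curve $\gamma$ containing a piece of $\widetilde v$, your key claim fails: $\widetilde v$ lies on the boundary cylinder $T$, so $\Pi(\Delta)$ is \emph{not} a compact subset of the open hemisphere. Worse, the singular fibers of any admissible book foliation must be fibers over antipodal points of $\Pi(T'_p)$, hence lie on $T'_p$ itself; the natural choice (the endpoints of the arc $\Pi(T'_p)\cap\Pi(\overline G)$, i.e.\ the two fibers $T\cap T'_p$) then contains $\widetilde v$, so the leaf parameter $s$ is undefined along part of $\partial\Delta$, while any other antipodal choice places one singular fiber inside $G$ on $T'_p$, where you have no control on whether it meets $\Delta$. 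So the interior-extremum argument collapses exactly in the cases that matter. The paper spends most of its proof on this reduction: it rules out contact of the closed curve with $\widetilde h_1$, $\widetilde h_2$ by the boundary maximum principle against $T$, rules out the vertexes $\widetilde 2$, $\widetilde 3$ where the surface is horizontal, and disposes of contact with $\widetilde v$ by translating the disk $D$ along $T'_p$ (a lift of a rotation preserving $\Pi(T'_p)$) away from the singular fibers $T\cap T'_p$ before running the foliation argument; it also observes that residual points of $\widetilde h_0$ are harmless because they are off $T$.

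Two smaller points. First, the paper's choice of axis, namely the two fibers $\gamma_1\cup\gamma_2=T\cap T'_p$ sitting on the boundary cylinder, makes the foliation regular throughout $G$ with no case analysis; your ``choose the centre by compactness'' argument, where it works, essentially forces this same choice, so nothing is gained by the extra freedom. Second, in your barrier step the contradiction from $\widetilde\Sigma_\lambda\subset T$ is not that a disk cannot lie in a torus (it can); it is that $\partial\widetilde\Sigma_\lambda=\widetilde\Gamma_\lambda\not\subset T$ (the interior of $\widetilde h_0$ is off $T$), or equivalently that $\nu_\lambda(\widetilde 2)=1$ while $T$ is everywhere vertical.
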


\begin{proof}
  Let $T_p'$ be the Clifford torus containing $T_p$. Notice that $T_p'\neq T$ because otherwise $\widetilde\Sigma_\lambda$ and $T$ are tangent at the interior point $p$, in contradiction with the maximum principle. Write $T\cap T_p'=\gamma_1\cup\gamma_2$, where $\gamma_1$ and $\gamma_2$ are vertical geodesics whose projections by $\Pi$ are opposite points in $\Pi(T)\subset\R^2$. Assume by contradiction that there is a closed curve $\beta\subset T_p\cap\widetilde\Sigma_\lambda$. Taking into account that $\widetilde\Sigma_\lambda$ is a disk, that implies the existence of another disk $D \subset\widetilde\Sigma_\lambda$ with boundary $\beta$.

  Observe that $\beta$ lies in the intersection of minimal surfaces, and can be assumed a piecewise smooth Jordan curve with with interior positive angles at its vertexes; in particular, $\widetilde\Sigma_\lambda$ must be vertical at such vertexes. This implies that $\beta$ cannot reach $\widetilde 2$ or $\widetilde 3$, where $\widetilde\Sigma_\lambda$ is horizontal. It cannot reach any interior point of $\widetilde h_1$ or $\widetilde h_2$ either, since this goes against the boundary maximum principle when one compares $\widetilde\Sigma_\lambda$ and $T$ at such a point. On the other hand, if $\beta\cap\widetilde v\neq\emptyset$, it follows that $\widetilde v\subset T_p\cap\gamma_1$ (swapping $\gamma_1$ and $\gamma_2$ if necessary) and thus $\beta\cap\gamma_1\neq\emptyset$. Since $\beta$ does not intersect $\widetilde h_1$ or $\widetilde h_2$, this implies that $\beta\cap\gamma_2=\emptyset$, and hence we can move $D$ inside $G$ away from $\gamma_1\cup\gamma_2$ by an isometry preserving $T_p'$, e.g., the lift to $M(\kappa,\tau)$ of a sufficiently small translation along $\Pi(T_p')$. 

  The previous paragraph implies that we can assume that $\beta$ does not touch $\widetilde h_1\cup\widetilde h_2\cup\widetilde v$, and hence $\beta\cap T=\emptyset$. The family of vertical cylinders containing $\gamma_1\cup\gamma_2$ is an open-book minimal foliation of $G$, whence the maximum principle with respect to this foliation implies that $D$ (and hence $\widetilde\Sigma_\lambda$) is contained in one of the cylinders, which is the desired contradiction. It is important to notice that there could be points of the interior of $\widetilde h_0$ in $T_p$, but they do not belong to $T$ and therefore do not concern the above arguments.
\end{proof}

We also need to understand the local picture around a point in which the angle function and some of its derivatives vanish. Next lemma is stated for an arbitrary $\E(\kappa,\tau)$-space, and we believe it is true for all $n\in\N$.

\begin{lemma}\label{lemma:intersection}
Let $\Sigma\subset\mathbb{E}(\kappa,\tau)$ be a minimal surface whose angle function satisfies $\nu(p)=0$ at some $p\in\Sigma$, and let $T$ be the vertical minimal cylinder tangent to $\Sigma$ at $p$. If the derivatives of $\nu$ at $p$ vanish up to order $n\in\{1,2\}$, then $T_p\cap\Sigma$ consists of at least $n+2$ curves meeting transversally at $p$.
\end{lemma}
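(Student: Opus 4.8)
The plan is to analyze $\Sigma$ and the cylinder $T$ simultaneously as graphs over their common tangent plane at $p$ and to reduce the statement to the classical structure of nodal sets of solutions of linear elliptic equations. Since $\nu(p)=0$, the tangent plane $T_p\Sigma=T_pT$ is \emph{vertical}; let $e_2=\xi_p$ be the unit vertical direction it contains and $e_1$ the orthogonal horizontal direction, and write both surfaces near $p$ as graphs $x_3=u_\Sigma(x_1,x_2)$ and $x_3=u_T(x_1,x_2)$ in normal coordinates adapted to this plane. Both $u_\Sigma$ and $u_T$ solve the quasilinear elliptic minimal surface equation of $\E(\kappa,\tau)$ and vanish to order at least two at the origin (value and gradient vanish by tangency). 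Consequently the difference $f=u_\Sigma-u_T$ solves a \emph{linear} second order elliptic equation $Lf=0$ with smooth coefficients, obtained by the mean value linearization $L=\int_0^1 DQ[u_T+tf]\,\df t$ of the minimal surface operator $Q$; moreover $T_p\cap\Sigma$ coincides near $p$ with the nodal set $\{f=0\}$.

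First I would invoke the structure theorem of Bers and Hartman--Wintner: a solution $f\not\equiv0$ of $Lf=0$ vanishes to some finite order $k=\mathrm{ord}_p(f)\geq1$ and satisfies $f=P_k+o(\abs{x}^k)$, where $P_k$ is a nonzero homogeneous polynomial of degree $k$ annihilated by the principal symbol of $L$ at $p$. The nodal set of such an $f$ is, near $p$, a union of exactly $k$ regular curves through $p$ meeting pairwise transversally at equal angles $\tfrac{\pi}{k}$ (the nodal lines of $P_k$, perturbed to $C^1$ arcs). Thus it suffices to prove that $\mathrm{ord}_p(f)\geq n+2$. Observe that $f\not\equiv0$: otherwise $\nu\equiv0$ near $p$ and $\Sigma$ would be a piece of the vertical minimal cylinder $T$, contrary to $\nu$ having finite vanishing order.

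The key step is the identity $\mathrm{ord}_p(\nu)=\mathrm{ord}_p(f)-1$. Since $T$ is everywhere vertical, its angle function vanishes identically, so writing the angle function as a functional $\nu=\Phi(u,\nabla u;x)$ of the graph and subtracting the identity $\Phi(u_T,\nabla u_T;x)\equiv0$ cancels all terms depending only on $T$ and on the ambient geometry; what remains is $\nu=a(x)f+b(x)\cdot\nabla f+Q_0(f,\nabla f;x)$ with $a,b$ smooth, $Q_0$ at least quadratic, and $b(p)=(0,-1)$, because $\nu$ measures to leading order the tilt of the normal in the vertical direction, i.e.\ $\nu=-\partial_{x_2}f+(\text{higher order})$. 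Using the degree count above, every discarded term is of order at least $k$, while $-\partial_{x_2}P_k$ has order $k-1$; ellipticity forces $\partial_{x_2}P_k\neq0$, since otherwise $P_k=c\,x_1^k$ with $c\neq0$, which is not annihilated by the positive definite principal symbol when $k\geq2$. Hence $\nu$ has leading term $-\partial_{x_2}P_k$ and $\mathrm{ord}_p(\nu)=k-1$ exactly. The hypothesis that the derivatives of $\nu$ vanish up to order $n$ gives $\mathrm{ord}_p(\nu)\geq n+1$, whence $\mathrm{ord}_p(f)=\mathrm{ord}_p(\nu)+1\geq n+2$ and the nodal set produces at least $n+2$ transversal curves, as claimed.

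The main obstacle is the order identity $\mathrm{ord}_p(\nu)=\mathrm{ord}_p(f)-1$, and in particular the non-cancellation $\partial_{x_2}P_k\neq0$: a careless estimate of the higher order remainder in $\nu=-\partial_{x_2}f+\cdots$ could a priori allow $\mathrm{ord}_p(\nu)>k-1$, which would destroy the lower bound on $k$. For $n\in\{1,2\}$ this identity can alternatively be checked directly by computing the low order jets of $\nu$ in terms of the Hessian and third derivatives of $f$, which is presumably why the statement is restricted to these cases; the Bers--Hartman--Wintner argument above indicates that the conclusion should in fact persist for every $n\in\N$.
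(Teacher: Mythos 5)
Your proof is correct, but it takes a genuinely different route from the paper's, and a more general one. The paper argues by direct jet computations in the model $M(\kappa,\tau)$: writing $\Sigma$ as a coordinate graph $x=f(y,z)$ over the tangent cylinder $T=\{x=0\}$, it computes the low-order jets of $\nu$ and of the mean curvature at $p$ --- equations \eqref{lemma:intersection:eqn2}--\eqref{lemma:intersection:eqn4} --- and uses minimality ($H=0$, $\nabla H=0$) to convert vanishing of the jets of $\nu$ into full second, resp.\ third, order contact of $\Sigma$ with $T$; the standard fact that two distinct tangent minimal surfaces meet in (order of contact plus one) transversal curves then concludes. You instead linearize the whole problem: the difference $f$ of the two graphs solves a linear elliptic equation, Bers--Hartman--Wintner gives $f=P_k+o(\abs{x}^k)$ with $P_k$ annihilated by the frozen principal symbol, and the hypothesis on $\nu$ is converted into $k\geq n+2$ via the order identity $\mathrm{ord}_p(\nu)=k-1$. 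Minimality enters your argument only through the harmonicity of $P_k$, used in the non-cancellation $\partial_{x_2}P_k\neq 0$ (note that $k\geq 2$ automatically by tangency, so the ellipticity argument applies). What this buys is uniformity in $n$: your argument proves the lemma for every $n\in\N$, settling the expectation stated before the lemma in the paper (``we believe it is true for all $n\in\N$''), whereas the hand computation of jets is exactly what confines the paper's proof to $n\in\{1,2\}$. Two caveats. First, the structural identity $\nu=a(x)f+b(x)\cdot\nabla f+Q_0$ with $b(p)=(0,-1)$ is asserted rather than derived; it is correct --- indeed it is precisely what \eqref{lemma:intersection:eqn2} and \eqref{lemma:intersection:eqn3} verify to second order, since they say $\nabla\nu(p)=-\nabla(f_z)(p)$ and $\nabla^2\nu(p)=-\nabla^2(f_z)(p)$ --- but a self-contained write-up should check it at $p$ in the adapted coordinates; note that only $b(p)\neq 0$ is really needed, because no directional derivative of a nonzero homogeneous polynomial of degree $k\geq 2$ in two variables annihilated by a positive definite symbol can vanish identically. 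Second, your justification that $f\not\equiv 0$ appeals to ``$\nu$ having finite vanishing order'', which is not among the hypotheses; the honest statement is that if $f\equiv 0$ then $\Sigma$ is locally a piece of $T$ and the conclusion fails, so this nondegeneracy must be implicitly assumed --- the paper's own opening sentence about intersections of minimal surfaces carries the same implicit assumption, and it holds in the applications to $\widetilde\Sigma_\lambda$, whose angle function is not identically zero.
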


\begin{proof}
The intersection of two minimal surfaces is a family of regular curves meeting transversally at points where the surfaces are tangent, and the number of curves at such points is the order of contact plus one. As the argument is local, we will use the model given by~\eqref{eqn:metric-Ekt} and assume that $p=(0,0,0)$ and $T=\{(x,y,z)\in D_\kappa\times\R:x=0\}$ up to an ambient isometry. Therefore, $\Sigma$ can be expressed as $x=f(y,z)$ around $p$, and the tangency condition reads
\begin{equation}\label{lemma:intersection:eqn1}
f(0,0)=0,\quad f_y(0,0)=0,\quad f_z(0,0)=0.
\end{equation}
It is a harsh computation to work out the mean curvature $H(y,z)$ and the angle function $\nu(y,z)$ of $\Sigma$. Since we are only interested on their values at $p$, we can employ~\eqref{lemma:intersection:eqn1} to simplify the calculations and to obtain
\begin{equation}\label{lemma:intersection:eqn2}
\begin{aligned}
H(0,0)&=\tfrac{1}{2}\left(f_{yy}(0,0)+f_{zz}(0,0)\right) = 0,\\
\nu_y(0,0)&=-f_{yz}(0,0),\qquad\nu_z(0,0)=-f_{zz}(0,0).
\end{aligned}
\end{equation}
From~\eqref{lemma:intersection:eqn2}, we deduce that $\nabla\nu(p)=0$ if and only if $f_{yy}(0,0)=f_{yz}(0,0)=f_{zz}(0,0)=0$, i.e., if and only if $\Sigma$ and $T$ coincide up to the second order at $p$.

If the derivatives of $\nu$ vanish up to the second order, then the derivatives of $f$ also vanish up to the second order. Another long computation using this yields
\begin{equation}\label{lemma:intersection:eqn3}
\nu_{yy}(0,0)\!=\!-f_{yyz}(0,0),\quad
\nu_{yz}(0,0)\!=\!-f_{yzz}(0,0),\quad
\nu_{zz}(0,0)\!=\!-f_{zzz}(0,0).
\end{equation}
Taking derivatives in the expression of the mean curvature and evaluating at $(0,0)$, we also get that the following must vanish:
\begin{equation}\label{lemma:intersection:eqn4}
H_y(0,0)=\tfrac{1}{2}(f_{yyy}(0,0)+f_{yzz}(0,0)),\quad
H_z(0,0)=\tfrac{1}{2}(f_{yyz}(0,0)+f_{zzz}(0,0)).
\end{equation}
From~\eqref{lemma:intersection:eqn3} and~\eqref{lemma:intersection:eqn4}, it follows that $\nabla\nu(p)=0$ and $\nabla^2\nu(p)=0$ if and only if $\Sigma$ and $T$ coincide up to the third order at $p$.
\end{proof}

\begin{proof}[Proof of Proposition~\ref{prop:angle} ]
  Since two horizontal geodesics meet at $\widetilde{2}$ and $\widetilde{3}$, it is clear that $\nu_\lambda$ equals $\pm 1$ at these points.  The choice $\nu_\lambda(\widetilde{2})=1$ implies that $\widetilde N$, the unit normal to $\widetilde\Sigma_\lambda$, points towards the interior of one of the components of $G-(S\cup\widetilde\Sigma_\lambda)$. It follows that $\nu_\lambda$ is positive along $\widetilde h_1$ and negative along $\widetilde h_2$, and cannot be zero in the interior of $\widetilde h_1$ or $\widetilde h_2$ by the boundary maximum principle with respect to $T$, whose angle function is identically zero (see Claim 1 below). 

Conversely, assume that $p\in\widetilde\Sigma_\lambda$ is such that $\nu_\lambda(p)^2=1$ which is not a vertex $\widetilde\Gamma_\lambda$, and let us reach a contradiction. In particular  $p$ does not belong to $\widetilde v$ (along which $\nu_\lambda$ vanishes). The minimal surfaces $U_p$ and $\widetilde\Sigma_\lambda$ are tangent at $p$, so their intersection contains (at least) two curves meeting transversally at $p$. We will finish the proof of item (a) by distinguish two cases:
\begin{enumerate}
  \item If $\lambda>\pi$ and $p=\widetilde h_2(\frac{\lambda-\pi}{2})$, then $\partial U_p$ contains $\widetilde v$ and part of $\widetilde h_2$, whereas $\widetilde h_0\cup\widetilde h_1$ is under $U_p$. This means that $U_p$ can be used as a barrier in the solution of the Plateau problem and hence $U_p\cap\widetilde\Sigma_\lambda\subset \widetilde v\cup\widetilde h_2$. This contradicts the above assertion that there are at least two curves in the intersection around $p$.
   \item Otherwise, no matter whether $p$ is in the interior or in the horizontal boundary of $\widetilde\Sigma_\lambda$, $\partial U_p$ does not intersect $\widetilde v$, and there are interior curves in the intersection $U_p\cap\widetilde\Sigma_\lambda$ with endpoints in $U_p\cap\widetilde\Gamma_\lambda$. Some of the horizontal geodesics joining these endpoints with $p$ (which are contained in $U_p$ by definition of umbrella but not necessarily in $\widetilde\Sigma_\lambda$), together with part of $\widetilde{h}_0\cup\widetilde{h}_1\cup\widetilde{h}_2$, form a closed horizontal geodesic triangle or quadrilateral in $\overline G$. It projects injectively via the Hopf fibration $\Pi$ to the boundary a certain geodesic triangle or quadrilateral in an hemisphere of $\mathbb{S}^2(4H^2+\kappa)$. This contradicts the fact that the bundle curvature is not zero (see~\cite[Proposition~3.3]{Man14}).
 \end{enumerate}
As for item (b), since $\nu_\lambda$ lies in the kernel of the stability operator of $\widetilde\Sigma_\lambda$, the nodal set $Z=\{p\in\widetilde\Sigma_\lambda:\nu_\lambda(p)=0\}$ forms a set of regular curves with endpoints in $\widetilde\Gamma_\lambda$ and transverse intersections precisely at points with $\nabla\nu_\lambda=0$. Observe that $\widetilde{v}\subset Z$ since $\widetilde v$ is a vertical geodesic, but $Z$ must also contain other components because $\nu_\lambda$ changes sign along $\widetilde h_0$ in view of item (a). We will prove that points of $Z$ not lying in $\widetilde v$ belong to a single regular curve $\widetilde\delta$ joining a certain point of $q_v\in\widetilde v$ (such that $\nabla\nu_\lambda(q_v)=0$) and some point $q_h\in\widetilde h_0$ (at which the change of sign takes place). This is a consequence of the following five claims:

  \textbf{Claim 1.} \emph{There are no points in $\widetilde h_1$ or $\widetilde h_2$ with $\nu_\lambda=0$ rather than $\widetilde 1$ or $\widetilde 4$.}

  If there are points in the interior of $\widetilde h_1$ or $\widetilde h_2$ with $\nu_\lambda = 0$ then $\widetilde\Sigma_\lambda$ is tangent to the Clifford torus $T$, but this goes against the boundary maximum principle for minimal surfaces. 

  \textbf{Claim 2.} \emph{There is exactly one point $q_h\in\widetilde h_0$ such that $\nu_\lambda(q_h)=0$.}

  Reasoning by contradiction, assume there are $p,q\in\widetilde h_0$ such that $\nu_\lambda(p)=\nu_\lambda(q)=0$ so the tangent cylinders $T_p'$ and $T_q'$ contained the $z$-axis. This means that there is an interior curve $\gamma_p\subset T_p\cap\widetilde\Sigma_\lambda$ (resp.\ $\gamma_q\subset T_q\cap\widetilde\Sigma_\lambda$) with one endpoint equal to $p$ (resp.\ $q$) and the other endpoint in $T_p\cap\widetilde\Gamma_\lambda$ (resp.\ $T_q\cap\widetilde\Gamma_\lambda$). 
  Since $T_p$ (resp. $T_q$) contains the $z$-axis, there are two possibilities for $T_p$ (resp.\ $T_q$), one of them containing $\widetilde 2$ and the other one containing $\widetilde 3$. The latter is not possible since it is not contained in the mean convex body $\Omega$. It follows that $T_p=T_q$ have one vertical side over the $z$-axis and $\widetilde 2$ belongs to the other vertical side, whilst the other two sides are horizontal geodesics lying in $S$. This implies that $T_p\cap\widetilde\Gamma_\lambda=T_q\cap\widetilde\Gamma_\lambda$ consists of a half of $\widetilde h_0$ and one isolated point in $\widetilde h_2$, and hence one can find a closed curve in $\gamma_p\cup\gamma_q\cup\widetilde h_0$ in contradiction with Lemma~\ref{lemma:foliation}.

  \textbf{Claim 3.} \emph{There are no interior points of $\widetilde\Sigma_\lambda$ in which $\nu_\lambda=0$ and $\nabla\nu_\lambda=0$.}

  Reasoning by contradiction, assume there is such an interior point $p$, and consider the vertical quadrilateral $T_p$ tangent to $\widetilde\Sigma_\lambda$ at $p$, see Figure~\ref{fig:intersection-helicoid-torus}. By Lemma~\ref{lemma:intersection}, $T_p\cap\widetilde\Sigma_\lambda$ contains (at least) three curves meeting transversally at $p$ with (at least) six endpoints in $\partial T_p\cap\widetilde\Gamma_\lambda$. If two of the endpoints lie in $\widetilde v$, then the corresponding two curves, along with a segment of $\widetilde v$, form a closed curve in $T_p$ contradicting Lemma~\ref{lemma:foliation}. If two of the endpoints lie in $\widetilde h_0$, then either they coincide (and the corresponding two curves again contradict Lemma~\ref{lemma:foliation}) or they are different (and hence $T_p$ contains part of $\widetilde h_0$ so $\nu_\lambda=0$ at two different points, in contradiction with Claim 2). This means that $T_p$ intersects each of the curves $\widetilde h_0$, $\widetilde v$ at most once, and $\widetilde h_1$ and $\widetilde h_2$ at most twice. Nonetheless, it cannot intersect $\widetilde h_1$ and $\widetilde h_2$ twice and also $\widetilde h_0$, whence at least two of the six endpoints of the three curves meeting at $p$ coincide by the pidgeonhole principle. The corresponding curves from $p$ form a closed curve in $T_p$ that goes against Lemma~\ref{lemma:foliation}.

  \textbf{Claim 4.} \emph{There is exactly one point in $q_v\in\widetilde v$ such that $\nabla \nu_\lambda(q_v)=0$.}

  Reasoning by contradiction, assume that there exist $p,q\in\widetilde v$ such that $\nabla \nu_\lambda(p)=\nabla \nu_\lambda(q)=0$. Thus, consider the vertical quadrilaterals $T_p$ and $T_q$ tangent to $\widetilde\Sigma_\lambda$ at $p$ and $q$, respectively. By Lemma~\ref{lemma:intersection}, there are (at least) three curves in $T_p\cap\widetilde\Sigma_\lambda$ (resp. $T_q\cap\widetilde\Sigma_\lambda$) meeting transversally at $p$ (resp.\ $q$), being one of them $\widetilde v$ itself. Let us distinguish two cases:
  \begin{enumerate}
    \item If $\frac{\pi}{2}<\lambda<\frac{3\pi}{2}$, then both $(T_p\cap\widetilde\Gamma_\lambda)-\widetilde v$ and $(T_q\cap\widetilde\Gamma_\lambda)-\widetilde v$ consist of at most one point in $\widetilde h_2$ and one point in $\widetilde h_0$, we deduce that the two interior curves $\gamma_p^0,\gamma_p^2\subset T_p\cap\widetilde\Sigma_\lambda$ and $\gamma_q^0,\gamma_q^2\subset T_q\cap\widetilde\Sigma_\lambda$ can be chosen such that $\gamma_p^i$ and $\gamma_q^i$ have endpoints in $\widetilde h_i$ for $i\in\{0,2\}$ (see Figure~\ref{fig:claim3} left). Note that if both curves arrived in $\widetilde h_0$ or $\widetilde h_2$, then $T_p$ or $T_q$ would be tangent to the Clifford torus containing $\widetilde h_0$ or $\widetilde h_2$, which is obviously not possible. Hence $\gamma_p^i$ and $\gamma_q^i$ end at $\widetilde{h}_i$ for each $i\in\{1,2\}$. Notice that $\gamma_p^2$ and $\gamma_q^2$ have the same endpoint in $\widetilde h_2$ because the projections $\Pi(T_p),\Pi(T_q)\subset\mathbb{S}^2(4H^2+\kappa)$ are great circles, which intersect at two antipodal points (see Figure~\ref{fig:claim3} right). Assume without loss of generality that $p$ is closer to $\widetilde 4$ than $q$, so the curves $\gamma_p^0$ and $\gamma_q^2$ intersect at some interior point. This implies that $T_p=T_q$ and there exists a closed curve contained in $\widetilde v\cup\gamma_p^0\cup\gamma_q^2\subset T_p$, which contradicts Lemma~\ref{lemma:foliation}.
    \item If $\lambda\geq\frac{3\pi}{2}$, then $(T_p\cap\widetilde\Gamma_\lambda)-\widetilde v$ and $(T_q\cap\widetilde\Gamma_\lambda)-\widetilde v$ consist of at most one point in $\widetilde h_2$ and one point in $\widetilde h_1$ rather than $\widetilde h_0$. The reasoning in item (1) can be mimicked by substituting $\widetilde h_0$ with $\widetilde h_1$. Note that the new curves $\gamma_p^1$ and $\gamma_q^1$ ending in $\widetilde h_1$ do have the same endpoint in this case.
  \end{enumerate}
  \begin{figure}[htbp]
    \centering
    \includegraphics[width=\textwidth]{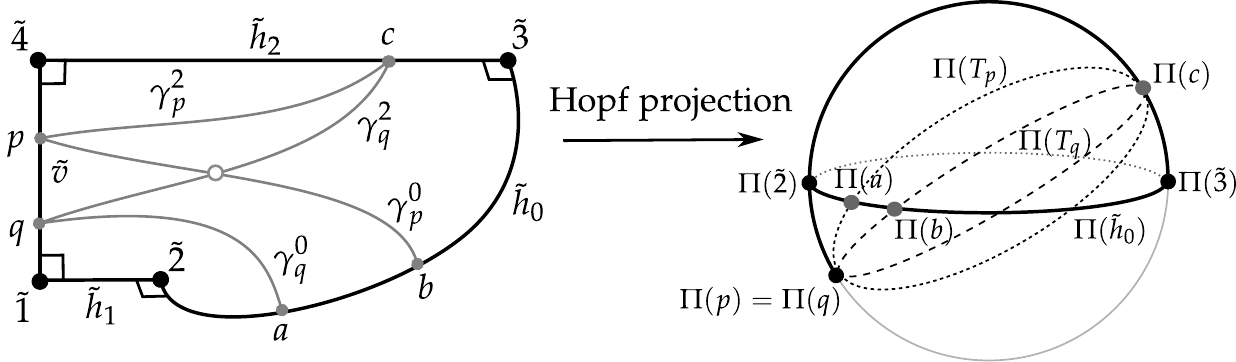}
    \caption{Case $\frac{\pi}{2}<\lambda<\frac{3\pi}{2}$ in Claim 4: schematic representation of the geodesic polygon $\widetilde{\Gamma}_\lambda$ (left) and its Hopf projection to $\mathbb{S}^2(4H^2 + \kappa)$ (right). The Clifford tori $T_p$ and $T_q$ project to great circles containing $\Pi(p) = \Pi(q)$ so they intersect the polygon $\widetilde{\Gamma}_\lambda$ in two points $a$ and $b$ along $\widetilde{h}_0$ and in the same point $c$ along $\widetilde{h}_2$}
    \label{fig:claim3}
  \end{figure}

  \textbf{Claim 5.} \emph{There is exactly one interior curve $\widetilde\delta\subset\widetilde\Sigma_\lambda$ where $\nu_\lambda$ vanishes.}\nopagebreak

  Due to the previous claims, it suffices to show that at the point $q_v\in\widetilde v$ (given by Claim 4) no more than two curves of $Z$ meet (being $\widetilde v$ one of them), and at the point $q_h\in\widetilde h_0$ (given by Claim 2), there is only one curve, i.e., $\nabla\nu_\lambda(q_h)\neq 0$ (see Figure~\ref{fig:polygon-Berger}). On the one hand, if there are (at least) two interior curves of $Z$ meeting at $q_v$, then the vertical quadrilateral $T_{q_v}$ intersects $\widetilde\Sigma_\lambda$ in at least four curves by Lemma~\ref{lemma:intersection} (case $n=2$), one of them being $\widetilde v$, so there are at least three interior curves in $\widetilde\Sigma_\lambda\cap T_{q_v}$ around $q_v$. Since $T_{q_v}$ intersects $\widetilde\Gamma_\lambda$ in $\widetilde v$, in one point of $\widetilde h_2$ and in at most one point of either $\widetilde h_0$ or $\widetilde h_1$, it follows that there is a minimal disk contained in $\widetilde\Sigma_\lambda$ with boundary in $T_{q_v}$ contradicting Lemma~\ref{lemma:foliation} as in previous claims. On the other hand, if $\nabla\nu_\lambda(q_h)=0$, then $T_{q_h}$ intersects $\widetilde\Sigma_\lambda$ in at least three curves by Lemma~\ref{lemma:intersection}, being $\widetilde h_0$ one of them. Since $T_{q_h}\cap\widetilde\Gamma_\lambda$ consists of $\widetilde h_0$ and one point in $\widetilde h_2$, this leads to the same contradiction as in the case of $q_v$.

As for item (c), let $\frac{\pi}{2}\leq\lambda_1<\lambda_2$, and observe that $\widetilde{h}_1((\frac{\pi}{4},\frac{\lambda_1}{2}))\subset \widetilde{h}_1((\frac{\pi}{4}, \frac{\lambda_2}{2}))$  and $\widetilde{h}_2((\frac{-\pi}{4}, \frac{\lambda_1}{2})) \subset \widetilde{h}_2((-\frac{\pi}{4}, \frac{\lambda_2}{2}))$, whereas $\widetilde h_0$ does not depend on $\lambda$. This means that, for each $p$ in the horizontal boundary, the function $\lambda\mapsto\nu_\lambda(p)$ is defined and continuous on an interval of the form $[\lambda_0,+\infty)$ for some $\lambda_0$ depending on $p$. As $\widetilde\Gamma_{\lambda_1}$ lies in the boundary of the mean convex open subset of $\Omega$ bounded by $S$, $T$ and $\widetilde\Sigma_{\lambda_2}$, the surface $\widetilde\Sigma_{\lambda_2}$ can be seen as a barrier for $\widetilde\Sigma_{\lambda_1}$, and hence $\widetilde\Sigma_{\lambda_1}$ and $\widetilde\Sigma_{\lambda_2}$ are ordered along their common boundary. Since the angle function does not take values $\pm 1$ in the interior of the horizontal boundary components, the monotonicity properties in item (c) follow from comparing the normal vector fields to $\widetilde\Sigma_{\lambda_1}$ and $\widetilde\Sigma_{\lambda_2}$ along their common boundary. Note that this monotonicity is strict as a consequence of the boundary maximum principle for minimal surfaces. In the case of $\widetilde h_1$ and $\widetilde h_2$, $\nu_\lambda$ additionally does not change sign due to item (b).
\end{proof}

\subsection{The conjugate $H$-immersion}\label{subsec:conjugate}
Let $\Sigma_\lambda \subset \mathbb{M}^2(\kappa) \times \mathbb{R}$ be the conjugate of the surface $\widetilde{\Sigma}_\lambda$ defined in Section~\ref{subsec:plateau}. Therefore, $\Sigma_\lambda$ is a compact $H$-surface whose boundary $\Gamma_\lambda$ consists of three curves $h_0$, $h_1$ and $h_2$ contained in vertical planes $P_0$, $P_1$ and $P_2$, respectively, and a curve $v$ lying in a slice, which will be assumed to be $\mathbb{M}^2(\kappa)\times \{0\}$ after a vertical translation (see Section~\ref{subsec:conjugate-Plateau-technique} and Figure~\ref{fig:conjugate-polygon}). Note that $\Sigma_\lambda$ has angles of $\frac\pi2$ at its vertexes $1$, $2$, $3$ and $4$, so it becomes a complete $H$-surface $\Sigma_\lambda^*$ by successive mirror symmetries about $P_0$, $P_1$, $P_2$ and $\mathbb{M}^2(\kappa)\times \{0\}$. The case $\lambda \in [0, \frac{\pi}{2}]$ was described in~\cite[Theorem 1]{MT} so we will assume that $\lambda > \frac{\pi}{2}$ in the sequel.

\begin{figure}[htbp]
  \centering
  \includegraphics{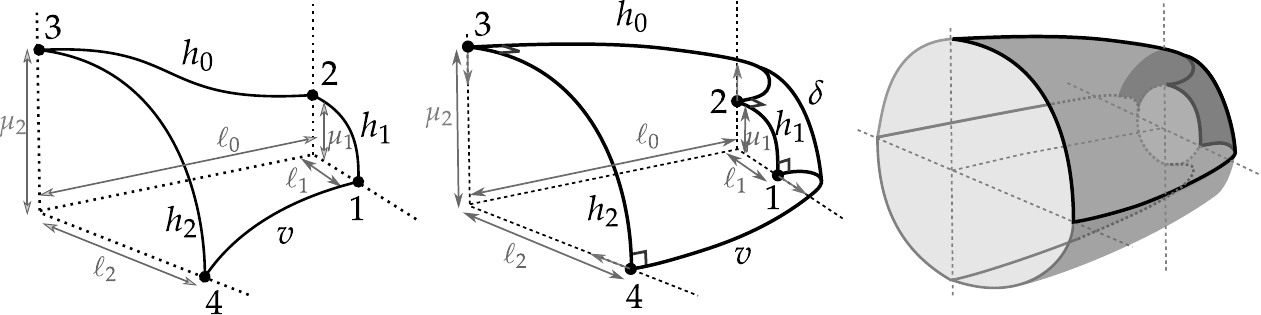}
  \caption{Conjugate polygon $\Gamma_\lambda$ for $\lambda < \frac{\pi}{2}$ (left), $\lambda > \frac{\pi}{2}$ (center) and the fundamental annulus $A_\lambda$ for $\lambda > \frac{\pi}{2}$ (right) obtained by reflecting $\Sigma_\lambda$ in the vertical plane containing $h_0$ and the slice containing $v$.}
  \label{fig:conjugate-polygon}
\end{figure}

\begin{remark}\label{rmk:round-sphere2}
If $\kappa=0$, then Remark~\ref{rmk:round-sphere1} ensures that $\widetilde\Sigma_\lambda$ is equivariant. The uniqueness in Lawson correspondence (up to ambient isometries) implies that $\Sigma_\lambda^*\subset\mathbb{R}^3$ is also equivariant. Due to the above geometric depiction of $\Sigma_\lambda^*$, along with the fact that it stays at bounded distance from the straight line $\Gamma=P_0\cap(\mathbb{R}^2\times\{0\})$, we easily infer that $\Sigma_\lambda^*$ is one of the classical Delaunay $H$-surfaces in $\mathbb{R}^3$, and it is rotationally invariant about $\Gamma$.
\end{remark}

For each $i\in\{0,1,2\}$, we can express $h_i=(\beta_i,z_i)\in\mathbb{M}^2(\kappa)\times\mathbb R$, and it follows that $\|\beta_i'\|=|\nu_\lambda|$ and $|z_i'|=(1-\nu_\lambda^2)^{1/2}$ because $\Sigma_\lambda^*$ intersects $P_i$ orthogonally, see~\cite[Section~5.2]{MT}. In view of Proposition~\ref{prop:angle}, we deduce that $\beta_1$, $\beta_2$, $z_0$, $z_1$ and $z_2$ are injective, and $\beta_0$ can be split into two injective subcurves by cutting at the point where the angle function $\nu_\lambda$ changes sign. Since the vertical planes $P_1$ and $P_2$ are orthogonal to $P_0$, it follows that $\Sigma_\lambda^*$ is invariant under horizontal translations of length $2\ell_0(\lambda)$, where 
\begin{align}\label{eqn:definition-ell-mu}
\ell_i(\lambda)&=-\int_{h_i}\nu_\lambda,&
\mu_i(\lambda)&=\int_{h_i}\sqrt{1-\nu_\lambda^2}
\end{align}
denote, respectively, the (signed) length of the projection of $h_i$ to $\mathbb{M}^2(\kappa)$ and the difference of heights of the endpoints of $h_i$, for $i\in\{0,1,2\}$, see Figure~\ref{fig:conjugate-polygon}. 

\begin{corollary}\label{cor:monotonicity-ell}
The functions $\lambda\mapsto\ell_i(\lambda)$ satisfy the following monotonicity properties:
 \begin{enumerate}
   \item[(a)] $\lambda\mapsto\ell_0(\lambda)$ is strictly decreasing and positive on $[0,+\infty).$
   \item[(b)] $\lambda\mapsto\ell_1(\lambda)$ is strictly decreasing on $[0,+\infty)$ with $\ell_1(\frac{\pi}{2})=0$.
   \item[(c)] $\lambda\mapsto\ell_2(\lambda)$ is strictly increasing  and positive on $[0,+\infty)$.
 \end{enumerate}
 
 \end{corollary}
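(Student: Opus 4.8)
The plan is to reduce every $\ell_i(\lambda)$ to an integral of the angle function along the corresponding horizontal geodesic of $\widetilde\Sigma_\lambda$ and then differentiate in $\lambda$, exploiting the monotonicity in Proposition~\ref{prop:angle}. Since the Daniel correspondence is an isometry preserving the angle function, integrating $\nu_\lambda$ over $h_i\subset\Sigma_\lambda$ is the same as integrating it over $\widetilde h_i\subset\widetilde\Sigma_\lambda$ with respect to arc length, so that $\ell_i(\lambda)=-\int_{\widetilde h_i}\nu_\lambda$, the parameter range being $[0,\frac\pi2]$ for $i=0$ (independent of $\lambda$) and $[\frac\pi4,\frac\lambda2]$, $[-\frac\pi4,\frac\lambda2]$ for $i=1,2$ (depending on $\lambda$). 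Throughout I would treat the nodoid range $\lambda>\frac\pi2$ with Proposition~\ref{prop:angle} and glue it continuously at $\lambda=\frac\pi2$ with the unduloid range $\lambda\in[0,\frac\pi2]$ already analysed in~\cite{MT}.

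For $\ell_1$ and $\ell_2$ both the integrand and the domain move, so I would differentiate by Leibniz's rule. The crucial observation is that the moving endpoints $\widetilde h_1(\frac\lambda2)=\widetilde 1$ and $\widetilde h_2(\frac\lambda2)=\widetilde 4$ lie on the vertical geodesic $\widetilde v$, where $\nu_\lambda=0$; hence the boundary term coming from the varying limit of integration vanishes and $\frac{d}{d\lambda}\ell_i$ reduces (up to a positive arc-length factor) to $-\int_{\widetilde h_i}\partial_\lambda\nu_\lambda$. By Proposition~\ref{prop:angle}(c) the integrand $\partial_\lambda\nu_\lambda$ is positive along $\widetilde h_1$ and negative along $\widetilde h_2$, so that $\frac{d}{d\lambda}\ell_1<0$ and $\frac{d}{d\lambda}\ell_2>0$; that is, $\ell_1$ is strictly decreasing and $\ell_2$ strictly increasing. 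The value $\ell_1(\frac\pi2)=0$ is immediate because the interval $[\frac\pi4,\frac\lambda2]$ collapses to a point at $\lambda=\frac\pi2$, while the positivity of $\ell_2$ follows from $\nu_\lambda<0$ in the interior of $\widetilde h_2$ (Proposition~\ref{prop:angle}(a),(c)), whence $-\int_{\widetilde h_2}\nu_\lambda>0$.

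For $\ell_0$ the domain $[0,\frac\pi2]$ is fixed, so differentiation only sees the integrand: since $\lambda\mapsto\nu_\lambda(p)$ is strictly increasing for $p\in\widetilde h_0((0,\frac\pi2))$ by Proposition~\ref{prop:angle}(c), we get $\frac{d}{d\lambda}\ell_0=-\int_{\widetilde h_0}\partial_\lambda\nu_\lambda<0$, so $\ell_0$ is strictly decreasing. The positivity of $\ell_0$ is the delicate point, and here monotonicity alone is insufficient: because $\nu_\lambda$ changes sign exactly once along $\widetilde h_0$ (Proposition~\ref{prop:angle}(a),(b)), the sign of $-\int_{\widetilde h_0}\nu_\lambda$ is not a priori determined. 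The approach I would take is geometric: as $P_1$ and $P_2$ are vertical planes orthogonal to $P_0$, the geodesics $c_1,c_2$ of $\mathbb{M}^2(\kappa)$ onto which they project are perpendicular to the geodesic $c_0$ onto which $P_0$ projects, and $\ell_0$ equals the signed length of $\beta_0$, i.e. the signed distance along $c_0$ between the two feet $c_0\cap c_1$ and $c_0\cap c_2$ (equivalently, half the translation length of $\Sigma_\lambda^*$). Thus $|\ell_0(\lambda)|$ is the distance between these feet, and it suffices to show that $\ell_0$ never vanishes, so that continuity together with the known value $\ell_0(0)>0$ (cylinder) forces $\ell_0>0$ on all of $[0,+\infty)$.

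The main obstacle is precisely this non-vanishing of $\ell_0$ in the nodoid regime $\lambda>\frac\pi2$: strict decrease would a priori allow $\ell_0$ to cross zero, so one must rule out $c_1=c_2$ for every $\lambda$. I expect this to be the hardest step, and would attack it either by showing $\liminf_{\lambda\to+\infty}\ell_0(\lambda)\ge 0$ (so that the strictly decreasing $\ell_0$ stays above its nonnegative limit), using the monotone convergence of $\nu_\lambda$ along the \emph{fixed} geodesic $\widetilde h_0$; or, more intrinsically, by analysing the projected Lambert-type quadrilateral with right angles at the feet $c_0\cap c_1$ and $c_0\cap c_2$, and arguing that its two sides $c_1,c_2$ perpendicular to $c_0$ cannot degenerate to a single geodesic, since $\ell_2>0$ forces the endpoints of $\beta_2$ to be distinct and $v$ joins two points lying off $c_0$ on $c_1$ and $c_2$ respectively.
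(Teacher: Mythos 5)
Your treatment of the monotonicity statements and of $\ell_1(\frac{\pi}{2})=0$ and $\ell_2>0$ is essentially the paper's own argument: the paper also reduces everything to the pointwise monotonicity of $\lambda\mapsto\nu_\lambda(p)$ from Proposition~\ref{prop:angle} (and from~\cite{MT} in the unduloid range), comparing $\ell_i(\lambda_1)$ and $\ell_i(\lambda_2)$ by a two-step inequality: first change the integrand on the fixed domain, then enlarge the domain, where the sign of $\nu_{\lambda_2}$ on the added arc plays the role of your vanishing boundary term. One caveat on your phrasing: Proposition~\ref{prop:angle}(c) only provides \emph{continuity} and strict monotonicity of $\lambda\mapsto\nu_\lambda(p)$, not differentiability, so the Leibniz-rule computation with $\partial_\lambda\nu_\lambda$ is not justified as written (and even granting differentiability, strict monotonicity only yields $\partial_\lambda\nu_\lambda\geq 0$, hence non-strict conclusions). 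This is easily repaired by working with finite differences exactly as the paper does, so it is a technical blemish rather than a gap.

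The genuine gap is the positivity of $\ell_0$, which you correctly single out as the crux but do not prove. Your first strategy (showing $\liminf_{\lambda\to+\infty}\ell_0(\lambda)\geq 0$ via monotone convergence of $\nu_\lambda$) is circular in effect: monotone convergence gives a pointwise limit $\nu_\infty\leq 1$ along the fixed geodesic $\widetilde h_0$ and hence a limit of $\ell_0$, but nothing in your argument controls the sign of $-\int_{\widetilde h_0}\nu_\infty$; pinning down that sign is exactly as hard as the original problem. Your second strategy (ruling out $c_1=c_2$ via the projected quadrilateral) does not produce a contradiction: if $c_1=c_2$, the curve $v$ would simply have both endpoints on the same geodesic of $\mathbb{M}^2(\kappa)$, meeting it orthogonally at both ends, which is perfectly possible for a plane curve; neither $\ell_2>0$ nor the location of the endpoints off $c_0$ precludes this (recall that nodoids are badly non-embedded, so their projections can wrap around). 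The paper's actual proof is a short barrier comparison that your proposal misses: the helicoid $S=\mathcal{H}_{-1}$, which contains $\widetilde h_0$ and was used as a barrier when solving the Plateau problem, lies above $\widetilde\Sigma_\lambda$ in a neighbourhood of $\widetilde h_0$; since neither $\nu_\lambda$ nor the angle function $\nu_S$ of the helicoid attains $\pm 1$ in the interior of $\widetilde h_0$, one gets $-1<\nu_\lambda<\nu_S<1$ along $\widetilde h_0$ for all $\lambda>0$, and the symmetry of $S$ about its axis gives $\int_{\widetilde h_0}\nu_S=0$; hence $\ell_0(\lambda)=-\int_{\widetilde h_0}\nu_\lambda>-\int_{\widetilde h_0}\nu_S=0$. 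Without an ingredient of this kind (a comparison surface whose angle function integrates to zero along $\widetilde h_0$), the sign-changing integral defining $\ell_0$ cannot be controlled.
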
 

 \begin{proof}
 Using the monotonicity of the angle function in Proposition~\ref{prop:angle} for $\lambda\geq\frac{\pi}{2}$ and in~\cite{MT} for $0\leq\lambda\leq\frac\pi2$, we get that, if $0\leq\lambda_1<\lambda_2$, then
 \[\ell_0(\lambda_1) = -\int_{h_0} \nu_{\lambda_1} >-\int_{h_0} \nu_{\lambda_2} = \ell_0(\lambda_2).\]
 The helicoid $S$, as a barrier for the solution of the Plateau problem, lies above $\widetilde\Sigma_\lambda$ in a neighbourhood of $\widetilde h_0$ in the model $M(4H^2+\kappa,H)$, see Figure~\ref{fig:polygon-Berger}. Since both $\nu_\lambda$ and $\nu_S$, the angle function of the helicoid, do not take the values $\pm 1$ in the interior of $\widetilde h_0$, one easily infers that $-1<\nu_\lambda<\nu_S<1$ along $\widetilde h_0$ for all $\lambda>0$. Since the helicoid is symmetric with respect to its axis, we deduce that $\int_{\widetilde h_0}\nu_S=0$ and hence $\ell_0(\lambda)=\int_{\widetilde h_0}(-\nu_\lambda)>0$ for all $\lambda>0$.

 We will finish by discussing only item (b), because item (c) follows from similar arguments. Proposition~\ref{prop:angle} and~\cite{MT} again yield the estimate
 \[\ell_1(\lambda_1)= -\int_{h_1([\frac{\pi}{4}, \frac{\lambda_1}{2}])} \nu_{\lambda_1} > -\int_{h_1([\frac{\pi}{4}, \frac{\lambda_1}{2}])}\nu_{\lambda_2} > -\int_{h_1([\frac{\pi}{4}, \frac{\lambda_2}{2}])} \nu_{\lambda_2} = \ell_1(\lambda_2).\]
 This first inequality follows by distinguishing cases depending on whether $\lambda_1$ and $\lambda_2$ lie in $[0,\frac{\pi}{2}]$ or in $(\frac\pi2,+\infty)$, plus the fact that $\lambda\mapsto\nu_\lambda$ is positive and increasing along $h_1$ if $\lambda\geq\frac\pi2$, and negative and increasing if $0\leq\lambda\leq\frac\pi2$. The second inequality follows from enlarging $[\frac{\pi}{4}, \frac{\lambda_1}{2}]$ to $[\frac{\pi}{4}, \frac{\lambda_2}{2}]$ taking the signs into account. Notice that $\ell_1(\frac{\pi}{2})=0$ because $h_1$ reduces to a point for $\lambda=\frac\pi2$.
 \end{proof}

\begin{remark}[Compactness]\label{rmk:compactness}
Assuming that $\kappa>0$, the surface $\Sigma_\lambda^*$ is compact if and only if $\ell_0(\lambda)$ is a rational multiple of $\frac{2\pi}{\sqrt\kappa}$, the length of a great circle of $\mathbb{S}^2(\kappa)$. Since $\ell_0(\lambda)$ is a positive continuous strictly decreasing function, we deduce that compact examples abound in the family $\Sigma_\lambda^*$ for $\lambda\geq0$. If the rationality condition does not hold, then $\Sigma_\lambda^*$ becomes dense in an open subset of $\mathbb{S}^2(\kappa)\times\mathbb{R}$.

If $\kappa\leq 0$, then item (a) of Corollary~\ref{cor:monotonicity-ell} evidences that $P_1$ and $P_2$ never coincide, whence $\Sigma_\lambda^*$ is a proper non-compact $H$-surface for all $\lambda\geq 0$.
\end{remark}

\section{The geometry of horizontal Delaunay surfaces}\label{sec:geometry}

This section is devoted to prove further properties of Delaunay surfaces, with special emphasis on embeddedness. We will develop a new approach that relies on finding a function in the kernel of the stability operator of conjugate surfaces that is produced simultaneously by two $1$-parameter groups of isometric deformations: the group $\{\Phi_t\}_{t\in\R}$ in $\mathbb{M}^2(\kappa) \times \mathbb{R}$ defined in Section~\ref{sec:foliation}, and the group $\{\widetilde\Phi_t\}_{t\in\R}$ in the Berger sphere $\mathbb{S}_b^3(4H^2+\kappa,H)$ given by $\widetilde\Phi_t(z,w) = (e^{-\frac{it}{2}}z, e^{\frac{2it}{2}}w)$.

\subsection{Uniqueness}
In the model $M(4H^2+\kappa,H)$ given by Equation~\eqref{eqn:metric-Ekt}, the aforesaid group $\{\widetilde\Phi_t\}_{t\in\R}$ corresponds to the screw-motions
\begin{equation}\label{eqn:screw-motions}
\widetilde\Phi_t(x,y,z)=\left(x\cos t+y\sin t,y\cos t-x\sin t,z+\tfrac{2H}{4H^2+\kappa}t\right),
\end{equation}
and is associated with the Killing vector field $\widetilde X=y\partial_x-x\partial_y+\tfrac{2H}{4H^2+\kappa}\partial_z$. This field has no zeros and gives rise to a Killing submersion $\Pi_0:M(\kappa,\tau)\to(\R^2,\df s^2)$ in the sense of~\cite{LM}, such that $\Pi_0(x,y,z)=(u,v)$ if and only if there exists $t\in\R$ such that $\widetilde\Phi_t(u,v)=(x,y,z)$; in particular, $\Pi_0(x,y,0)=(x,y)$ for all $(x,y)\in\R^2$. Note that the metric $\df s^2$ that makes $\Pi_0$ Riemannian has not constant curvature, the Killing vector field $\widetilde X$ has not constant length, and the bundle curvature is not constant. The horizontal geodesics $\widetilde h_1$ and $\widetilde h_2$ become vertical with respect to $\Pi_0$, whereas $\widetilde h_0$ and $\widetilde v$ are transversal to the fibers of $\Pi_0$. This means that $\widetilde\Gamma_\lambda$ is a \emph{Nitsche graph} with respect to $\Pi_0$ in the sense of~\cite[Definition~3.7]{Man12} for all $\lambda\geq 0$. As we justify next, the Nitsche condition implies the uniqueness of $\widetilde\Sigma_\lambda$ inside the mean convex body $\Omega$, which in turn implies that $\widetilde\Sigma_\lambda$ depends unambiguously and continuously on $\lambda\geq 0$. 

\begin{proposition}\label{prop:uniqueness}
Given $\lambda\geq0$, there exists a unique solution $\widetilde\Sigma_\lambda\subset\Omega$ of the Plateau problem with boundary $\widetilde\Gamma_\lambda$, and the interior of $\widetilde\Sigma_\lambda$ is a vertical graph with respect to $\Pi_0$.
\end{proposition}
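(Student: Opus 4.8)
Existence of a solution $\widetilde\Sigma_\lambda\subset\Omega$ has already been granted by the theorem of Meeks and Yau~\cite{MY82} in Section~\ref{subsec:plateau}, so the real content of the statement is the graph property together with uniqueness, and both will be deduced from the Nitsche character of $\widetilde\Gamma_\lambda$ following the uniqueness theory for Nitsche graphs in~\cite{Man12}. The plan is first to describe the relatively compact domain $\Omega_0\subset(\R^2,\df s^2)$ whose boundary is $\Pi_0(\widetilde h_0)\cup\Pi_0(\widetilde v)$. Since $\widetilde h_0$ and $\widetilde v$ are transversal to the fibers of $\Pi_0$, they are genuine graphs over the two arcs of $\partial\Omega_0$, which meet at the two points $\Pi_0(\widetilde h_1)$ and $\Pi_0(\widetilde h_2)$; the fiber arcs $\widetilde h_1$ and $\widetilde h_2$ project to these two corners and play the role of finite vertical walls joining the prescribed boundary heights. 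This is precisely the Nitsche configuration, and it persists for every $\lambda\ge 0$ by the explicit expressions of $\widetilde\Gamma_\lambda$.

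Granting this, I would produce the minimal Killing graph $G_\lambda$ over $\Omega_0$ taking the boundary values imposed by $\widetilde h_0$ and $\widetilde v$ (a Dirichlet-type problem with bounded data and two finite vertical walls over the corners), noting that $G_\lambda$ remains inside $\Omega$ because $S$ and $T$ act as barriers as in Section~\ref{subsec:plateau}. The surface $G_\lambda$ is then a minimal disk spanning $\widetilde\Gamma_\lambda$ inside the solid cylinder $\Pi_0^{-1}(\Omega_0)$, and the key step is to show that any minimal disk with this boundary must coincide with $G_\lambda$. For this I would exploit that, being a graph, the translates $\{\widetilde\Phi_t(G_\lambda)\}_{t\in\R}$ under the Killing flow~\eqref{eqn:screw-motions} are pairwise disjoint and foliate $\Pi_0^{-1}(\Omega_0)$, since $\widetilde\Phi_t$ slides each fiber along itself. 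A sweeping argument with this minimal foliation then applies: starting from a leaf disjoint from $\widetilde\Sigma_\lambda$ (which exists because $\widetilde\Phi_t$ raises the height unboundedly while $\widetilde\Sigma_\lambda$ is compact) and decreasing $t$ to the first contact, the interior maximum principle forces the contact leaf to agree with $\widetilde\Sigma_\lambda$, so the contact can only occur at $t=0$, i.e.\ $\widetilde\Sigma_\lambda=G_\lambda$.

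The delicate point —and the one I expect to be the main obstacle— is ruling out a first contact on the boundary at a parameter $t_0\neq 0$. Along the graph arcs $\widetilde h_0$ and $\widetilde v$ the translate $\widetilde\Phi_{t_0}(\widetilde\Gamma_\lambda)$ is strictly separated from $\widetilde\Gamma_\lambda$, but along the fiber arcs $\widetilde h_1$ and $\widetilde h_2$ the flow slides the walls into themselves, so that the two boundaries genuinely overlap and the interior maximum principle is unavailable there. Resolving this requires the boundary maximum principle at the vertical walls together with the Nitsche structure (as in~\cite{Man12}), controlling the behaviour at the corners $\Pi_0(\widetilde h_1)$ and $\Pi_0(\widetilde h_2)$; this is exactly what the Nitsche hypothesis is designed to handle. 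Once uniqueness inside $\Omega$ is established, the interior of $\widetilde\Sigma_\lambda=G_\lambda$ is a vertical graph with respect to $\Pi_0$ by construction.

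Finally, continuity in $\lambda$ is a formal consequence of uniqueness: given $\lambda_n\to\lambda$, the surfaces $\widetilde\Sigma_{\lambda_n}$ lie in the fixed compact region $\Omega$ with uniformly bounded area and curvature, so a subsequence converges to a minimal disk spanning $\widetilde\Gamma_\lambda$ inside $\Omega$; by uniqueness this limit is $\widetilde\Sigma_\lambda$, and since every subsequence has the same limit, the whole family $\widetilde\Sigma_\lambda$ depends unambiguously and continuously on $\lambda\geq 0$.
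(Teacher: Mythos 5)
Your reading of the geometry is correct: $S$ and $T$ are invariant under the flow $\widetilde\Phi_t$, so $\Omega$ is the solid cylinder $\Pi_0^{-1}(\overline{\Omega}_0)$, the arcs $\widetilde h_0$ and $\widetilde v$ are graphs over the two arcs of $\partial\Omega_0$, and $\widetilde h_1,\widetilde h_2$ are the finite walls over the corners; your sweeping argument with the leaves $\widetilde\Phi_t(G_\lambda)$, and your identification of the wall contact as the delicate case, are exactly the right mechanism. The genuine gap is the very first step: you \emph{assert} the existence of the minimal Killing graph $G_\lambda$ as the solution of ``a Dirichlet-type problem with bounded data and two finite vertical walls over the corners,'' but this is a nontrivial existence theorem that you neither prove nor can cite off the shelf. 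The boundary data has jump discontinuities at the two corners, and the ambient structure is a Killing submersion whose base metric $\df s^2$ is not of constant curvature, whose Killing field $\widetilde X$ has nonconstant length, and whose bundle curvature is nonconstant (the paper stresses precisely these features); a Jenkins--Serrin/Nitsche ``finite walls'' existence theory in this generality is not available in the tools the paper uses, and developing it (Perron method with $S,T$ as barrier cylinders, plus boundary regularity forcing the closure of the graph to contain exactly the two walls so that it spans $\widetilde\Gamma_\lambda$) is at least as hard as the proposition itself. Worse, the standard way such graph solutions are produced is precisely to take the Plateau solution and prove \emph{a posteriori} that it is a graph --- which is the statement under discussion --- so as written your argument has a circular flavor.

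The paper avoids this entirely by reversing your order: it takes the Meeks--Yau solutions \cite{MY82} already constructed in Section 3.2, obtains uniqueness by applying \cite[Proposition 3.8]{Man12} directly to Plateau solutions spanning the Nitsche contour (remarking explicitly that the proof of that proposition must be checked to survive a Killing field of bounded \emph{nonconstant} length --- a point you should also address when you invoke \cite{Man12}), and only then deduces the graph property from the maximum principle via slight deformations of the boundary as in \cite[Proposition 2]{MT}. Your proof can be repaired along the same lines without ever introducing $G_\lambda$: run your sweeping argument with the translates $\widetilde\Phi_t(\widetilde\Sigma_\lambda)$ of a Plateau solution against a second Plateau solution (for uniqueness) and against $\widetilde\Sigma_\lambda$ itself (for the graph property). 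The interior first-contact case is handled by the maximum principle as you say, and the wall-contact case by the boundary maximum principle along the common fiber arcs --- which is exactly the content of \cite[Proposition 3.8]{Man12} that both you and the paper ultimately lean on. Your final compactness-plus-uniqueness argument for continuity in $\lambda$ is fine (modulo standard curvature estimates), and indeed matches the paper's remark that uniqueness is what makes $\lambda\mapsto\widetilde\Sigma_\lambda$ well defined and continuous.
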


\begin{proof}
We have already proved the existence of $\widetilde\Sigma_\lambda$ in Section~\ref{subsec:plateau}. Uniqueness follows from~\cite[Proposition~3.8]{Man12}, whose proof also works in the case the Killing vector field has bounded nonconstant length. As a consequence, the fact that the interior of $\widetilde\Sigma_\lambda$ is a graph can be deduced from the maximum principle by performing slight deformations of the boundary $\widetilde\Gamma_\lambda$ as in~\cite[Proposition~2]{MT}.
\end{proof}

\subsection{Stability of the fundamental annulus} 
To deal with the global geometry, we will drop the model $M(4H^2+\kappa,H)$ throughout the rest of the paper, and assume that $\widetilde\Sigma_\lambda$ is immersed in $\mathbb S^3_b(4H^2+\kappa,H)\subset\C^2$ via the local isometry $\Theta$ given by Equation~\eqref{eq:local-isometry-Daniel-Berger}. Define $\widetilde\Sigma^*_\lambda\subset\mathbb S^3_b(4H^2+\kappa,H)$ as the complete (immersed) minimal surface in $\mathbb{S}_b^3(4H^2,H)$ we obtain by extending $\widetilde\Sigma_\lambda$ across its boundary. The Killing field $\widetilde X$ is also globally expressed as $\widetilde X_{(z,w)}=\frac{i}{2}(-z,w)$. 

The fact that $\widetilde\Sigma_\lambda\subset\mathbb S^3_b(4H^2+\kappa,H)$ is transversal to $\widetilde X$ makes us consider the smooth function $u=\langle\widetilde X,\widetilde N\rangle$, which is positive in the interior of $\widetilde\Sigma_\lambda$ and vanishes along $\widetilde h_1$ and $\widetilde h_2$. Since $\widetilde X$ is Killing, the function $u$ lies in the kernel of the stability operator of $\widetilde\Sigma_\lambda$, given by
\begin{equation}\label{eqn:stability-operator}
L=\Delta-2K+4H^2+\kappa(1+\nu_\lambda^2).
\end{equation}
Recall that a closed domain $D$ of a complete Riemannian surface is called (strongly) \emph{stable} if the first eigenvalue of its stability operator is non-negative, i.e., if
\[\lambda_1(D)=\inf\left\{\frac{\int_{D}fLf}{\int_{D}f^2}:f\in C^\infty_0(D),f\not\equiv 0\right\}\geq 0,\]
where $C^\infty_0(D)$ denotes the set of compactly supported smooth functions on $D$. Observe that $\Sigma_\lambda^*$ cannot be stable as a whole for any $\lambda\geq 0$ because it is orientable and parabolic (it has linear area growth by an estimate similar to Lemma~\ref{lemma:lag}), and therefore its stability would contradict~\cite[Theorem~2]{MPR}.

Let $A_\lambda$ be the $H$-annulus in $\mathbb M^2(\kappa)\times\mathbb R$ that extends $\Sigma_\lambda$ by means of mirror symmetries across $P_0$ and $\mathbb{M}^2(\kappa)\times\{0\}$ (see Figure~\ref{fig:conjugate-polygon} right). It consists of four copies of $\Sigma_\lambda$ and will be called the \emph{fundamental annulus} of $\Sigma_\lambda^*$. Next proposition shows that $A_\lambda$ is a nodal set of the function $u$ and hence stable.

\begin{proposition}\label{prop:stability}
The annulus $A_\lambda$ is a maximal stable domain of $\Sigma_\lambda^*$ for all $\lambda>0$.
\end{proposition}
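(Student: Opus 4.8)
The plan is to exhibit $A_\lambda$ as a relatively compact nodal domain of a Jacobi field on $\Sigma_\lambda^*$, namely $u'=\langle X,N\rangle$, where $X$ is the Killing field generating the translations $\{\Phi_t\}$ along $\Gamma$ introduced in Section~\ref{sec:foliation}. The cornerstone is the identity
\[
\langle\widetilde X,\widetilde N\rangle=\langle X,N\rangle
\]
on the conjugate immersions, which I expect to be the main obstacle. To establish it I would decompose $\widetilde X=\tfrac{2H}{4H^2+\kappa}\widetilde\xi+\widetilde X^{\,h}$ into vertical and horizontal parts, use that Daniel's correspondence preserves the angle function ($\langle\widetilde\xi,\widetilde N\rangle=\langle\xi,N\rangle=\nu_\lambda$) and rotates the tangential data by $J$ (so $\df\phi^{-1}(T)=J\df\widetilde\phi^{-1}(\widetilde T)$ and $S=J\widetilde S$), and check that the normal component of $\widetilde X^{\,h}$ matches that of the purely horizontal field $X$. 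Equivalently, one shows that $\{\widetilde\Phi_t\}$ and $\{\Phi_t\}$ are sister one-parameter groups, so that the conjugate of $\widetilde\Phi_t(\widetilde\Sigma_\lambda)$ is $\Phi_t(\Sigma_\lambda)$; differentiating at $t=0$ then identifies the two normal variations, both lying in $\ker L$ for the common operator~\eqref{eqn:stability-operator}.

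Granting this identity, Proposition~\ref{prop:uniqueness} supplies the sign structure for free: since the interior of $\widetilde\Sigma_\lambda$ is a vertical graph with respect to $\Pi_0$, it is transverse to $\widetilde X$, so $u=\langle\widetilde X,\widetilde N\rangle>0$ in the interior and vanishes exactly along the fibers $\widetilde h_1$, $\widetilde h_2$. Transporting to the product, $u'=\langle X,N\rangle$ is a Jacobi field on $\Sigma_\lambda$, positive on the interior and vanishing precisely on $h_1\cup h_2$; in particular $\Sigma_\lambda$ is a graph in the Killing direction $X$. As $X$ is globally defined, $u'$ extends to a Jacobi field on all of $\Sigma_\lambda^*$. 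The totally geodesic surfaces $P_0$ and $\mathbb{M}^2(\kappa)\times\{0\}$ both contain $\Gamma$, hence are tangent to $X$, so the two mirror symmetries building $A_\lambda$ preserve $X$; consequently $u'$ keeps its sign across the four copies of $\Sigma_\lambda$ forming $A_\lambda$, yielding $u'>0$ on the interior of $A_\lambda$ and $u'=0$ on $\partial A_\lambda$.

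It remains to confirm that $\partial A_\lambda$ is exactly the nodal set there. The planes $P_1$ and $P_2$ are vertical planes over geodesics orthogonal to $\Gamma$ (being orthogonal to $P_0$), so reflection across either one conjugates $\Phi_t$ to $\Phi_{-t}$ and therefore reverses $X$; hence $u'$ changes sign across each boundary component of $A_\lambda$ upon passing to the neighbouring copy of $\Sigma_\lambda^*$. Thus $A_\lambda$ is precisely a relatively compact nodal domain of $u'$, with $u'$ a genuine first eigenfunction.

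Finally, a relatively compact nodal domain of a solution of $Lw=0$ carries $w$ itself as a positive first eigenfunction, so $\lambda_1(A_\lambda)=0$ and $A_\lambda$ is stable. Maximality then follows from strict domain monotonicity of the first Dirichlet eigenvalue: any domain strictly containing $A_\lambda$ meets the adjacent region $\{u'<0\}$, so its first eigenvalue is strictly negative and it is unstable. Therefore $A_\lambda$ is a maximal stable domain for every $\lambda>0$.
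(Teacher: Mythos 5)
Your overall strategy (produce a Jacobi field that is positive in the interior of $A_\lambda$ and vanishes on $\partial A_\lambda$, then apply elliptic theory to get $\lambda_1(A_\lambda)=0$ and strict domain monotonicity for maximality) is exactly the paper's, and your symmetry bookkeeping for $X$ and $N$ in the product is correct. The gap is precisely the cornerstone you single out: the identity $\langle\widetilde X,\widetilde N\rangle=\langle X,N\rangle$ is false, and no matching of horizontal and vertical components will rescue it. To see this, let $\lambda\to 0$: by uniqueness and continuous dependence, $\Sigma_\lambda^*$ converges to the $\{\Phi_t\}$-invariant cylinder $\Sigma_0^*$, so $\langle X,N\rangle\to 0$ uniformly, whereas $\widetilde\Sigma_\lambda$ remains a vertical graph with respect to $\Pi_0$ (Proposition~\ref{prop:uniqueness}), so $\langle\widetilde X,\widetilde N\rangle$ stays positive in the interior; the two functions therefore cannot coincide for small $\lambda>0$. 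Indeed, the paper states (just before Corollary~\ref{cor:horizontal-graph}) that they are merely \emph{proportional}, with a $\lambda$-dependent constant tending to $0$ as $\lambda\to 0$. Worse, that proportionality is itself a \emph{consequence} of Proposition~\ref{prop:stability}: it is Corollary~\ref{cor:horizontal-graph}, proved by using that the zero-eigenspace of $L$ on $A_\lambda$ with Dirichlet conditions is one-dimensional, so you cannot feed it in as an input without circularity. Your alternative route via ``sister one-parameter groups'' fails for the same structural reason: conjugation determines the image surface only up to ambient isometry, and since $\widetilde\Phi_t(\widetilde\Sigma_\lambda)$ has the same induced metric, angle function and (rotated) shape operator as $\widetilde\Sigma_\lambda$, a perfectly valid choice of its conjugate is $\Sigma_\lambda$ itself, i.e.\ $F_t=\mathrm{id}$; there is no canonical family $F_t=\Phi_t$ whose derivative at $t=0$ you could compare with that of $\widetilde\Phi_t$.

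The repair is the paper's key observation that no geometric identification on the product side is needed at all: the single function $u=\langle\widetilde X,\widetilde N\rangle$, viewed on the abstract surface, already lies in the kernel of the stability operator~\eqref{eqn:stability-operator}, because conjugate surfaces \emph{share} this operator (\cite[Proposition~5.12]{Daniel07}). One then checks, via the explicit axial symmetries $R_{\widetilde h_0}$, $R_{\widetilde h_1}$, $R_{\widetilde h_2}$, $R_{\widetilde v}$ of $\widetilde\Sigma_\lambda^*$ and the correspondence between axial symmetries in the Berger sphere and mirror symmetries in $\mathbb{M}^2(\kappa)\times\mathbb{R}$, that $u$ is preserved by the reflections across $P_0$ and $\mathbb{M}^2(\kappa)\times\{0\}$ and sent to $-u$ by those across $P_1$ and $P_2$; combined with $u>0$ in the interior (Proposition~\ref{prop:uniqueness}), this exhibits $A_\lambda$ as a nodal domain of a Jacobi field of $\Sigma_\lambda^*$, and your final elliptic-theory paragraph then goes through verbatim. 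In short: keep your argument, but run it with $u$ instead of $\langle X,N\rangle$; the statement that $\langle X,N\rangle$ is a nonzero multiple of $u$ (hence that $A_\lambda$ is a horizontal Killing graph) comes out afterwards as a corollary, not before.
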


\begin{proof}
We will begin by showing that the smooth function $u=\langle\widetilde X,\widetilde N\rangle$ inherits the symmetries of $\widetilde\Sigma^*_\lambda$. If $R_\gamma$ denotes the axial symmetry about a horizontal or vertical geodesic containing a boundary component $\gamma\subset\widetilde\Gamma_\lambda$, then it is easy to check that
\begin{align*}
    R_{\widetilde{h}_0}(z, w) &= (\overline{z}, \overline{w}),&R_{\widetilde{v}}(z, w) &= (ie^{-i\lambda}w, -ie^{i\lambda}z),\\
    R_{\widetilde{h}_1}(z,w) &= (\overline{w}, \overline{z}),&R_{\widetilde{h}_2}(z,w) &= (-\overline{w}, -\overline{z}).
  \end{align*}
It turns out that $\widetilde\Phi_t\circ R_\gamma=R_\gamma\circ\widetilde\Phi_{-t}$ (and hence $(R_\gamma)_*\widetilde X=-\widetilde X$) if $\gamma$ is either $\widetilde h_0$ or $\widetilde v$; on the contrary, one has $\widetilde\Phi_t\circ R_\gamma=R_\gamma\circ\widetilde\Phi_{t}$ (and hence $(R_\gamma)_*\widetilde X=\widetilde X$) if $\gamma$ is $\widetilde h_1$ or $\widetilde h_2$. On the other hand, $(R_\gamma)_*\widetilde N=-\widetilde N$ for any of the four boundary components, where $\widetilde N$ is the extended unit normal to $\widetilde\Sigma_\lambda^*$. We deduce that $u$ is preserved by the symmetries about $\widetilde v$ or $\widetilde h_0$, and sent to $-u$ by the symmetries about $\widetilde h_1$ or $\widetilde h_2$ (note that $u=0$ along $\widetilde h_1$ and $\widetilde h_2$ because these curves are tangent to $\widetilde X$).

Observe that $u$ also produces a smooth function in the kernel of the stability operator of $\Sigma_\lambda^*$ because $\Sigma_\lambda^*$ and $\widetilde\Sigma_\lambda^*$ share the same stability operator~\eqref{eqn:stability-operator}, see~\cite[Proposition~5.12]{Daniel07}. Since axial symmetries in $\mathbb{S}_b^3(4H^2+\kappa,H)$ correspond to mirror symmetries in $\mathbb{M}^2(\kappa)\times\mathbb R$, it follows that the symmetries with respect to $P_0$ and $\mathbb M^2(\kappa)\times\{0\}$ preserve $u$, whereas the symmetries with respect to $P_1$ and $P_2$ send $u$ to $-u$. Proposition~\ref{prop:uniqueness} guarantees that $u>0$ on the interior of $\widetilde\Sigma_\lambda$, whence it remains positive in the interior of $A_\lambda$ by the aforesaid symmetries and vanishes identically along $\partial A_\lambda$. We deduce from classical elliptic theory that the first eigenvalue of its stability operator satisfies $\lambda_1(A_\lambda)=0$ and $\lambda_1(D)<0$ for any open domain $D\subset\Sigma_\lambda^*$ containing $A_\lambda$.
\end{proof}

If $X$ is the Killing field associated with the group $\{\Phi_t\}_{t\in\mathbb{R}}$ of translations along the axis $\Gamma=P_0\cap(\mathbb M^2(\kappa)\times\{0\})$, next corollary reveals that $\langle X,N\rangle$ is proportional to $\langle\widetilde X,\widetilde N\rangle$. Note that the constant of proportionality goes to zero as $\lambda\to 0$.

\begin{corollary}\label{cor:horizontal-graph}
If $\lambda>0$, the fundamental piece $\Sigma_\lambda$ is tangent to $X$ only on $h_1\cup h_2$.
\end{corollary}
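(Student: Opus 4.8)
The plan is to identify the Jacobi function $w:=\langle X,N\rangle$ produced by the Killing field $X$ with the function $u=\langle\widetilde X,\widetilde N\rangle$ from Proposition~\ref{prop:stability}, up to a nonzero multiplicative constant, and then simply read off the zero set of $u$. Since $X$ generates the isometric flow $\{\Phi_t\}$, the function $w$ lies in the kernel of the stability operator $L$ of~\eqref{eqn:stability-operator}, which $\Sigma_\lambda^*$ shares with $\widetilde\Sigma_\lambda^*$; thus $Lw=0$ automatically on all of $\Sigma_\lambda^*$.

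First I would determine the boundary behaviour of $w$ on the fundamental annulus $A_\lambda$. Its two boundary circles lie in the vertical planes $P_1$ and $P_2$, and because the compositions of the mirror symmetries $R_{P_1}$ and $R_{P_2}$ produce the translation $\Phi_{2\ell_0(\lambda)}$ along $\Gamma$, these planes are orthogonal to $\Gamma$; consequently $(R_{P_i})_*X=-X$ for $i=1,2$, exactly as in the analysis of $\widetilde X$ carried out in the proof of Proposition~\ref{prop:stability}. This anti-invariance means $X$ is orthogonal to $P_i$ along $P_i$, whereas $N$ is tangent to $P_i$ along $h_i$ because $\Sigma_\lambda^*$ meets $P_i$ orthogonally. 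Hence $w$ vanishes identically on $\partial A_\lambda=h_1\cup h_2$.

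The core step is the proportionality. By Proposition~\ref{prop:stability}, $A_\lambda$ is a maximal stable domain, so $\lambda_1(A_\lambda)=0$ is the first Dirichlet eigenvalue of $L$ on the connected annulus $A_\lambda$. Being a first eigenvalue of a Schr\"odinger-type operator, it is simple, and its eigenspace is spanned by $u$, which is positive in the interior. Since $w$ solves $Lw=0$ and vanishes on $\partial A_\lambda$, it is a $0$-eigenfunction of this Dirichlet problem, and therefore $w=c\,u$ on $A_\lambda$ for some constant $c$. If $c$ were zero, then $w$ would vanish on the open set $\text{int}(A_\lambda)$, and real-analyticity of $w$ would force $X$ to be everywhere tangent to $\Sigma_\lambda^*$; this would make $\Sigma_\lambda^*$ invariant under $\{\Phi_t\}$ and hence equal to the unique equivariant cylinder $\Sigma_{0,H}^*$, contradicting $\lambda>0$. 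Thus $c\neq 0$, in agreement with the fact that $c\to 0$ as $\lambda\to 0$.

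Finally, since $w=c\,u$ with $c\neq 0$, the two functions have the same zero set. Proposition~\ref{prop:stability} gives $u>0$ throughout the interior of $A_\lambda$ and $u=0$ only on $\partial A_\lambda$; restricting to the fundamental piece $\Sigma_\lambda$, the arcs $h_0$ and $v$ are erased by the reflections across $P_0$ and the slice and hence lie in the interior of $A_\lambda$, so $w$ vanishes on $\Sigma_\lambda$ precisely along $h_1\cup h_2$. As $w=0$ is exactly the condition that $X$ be tangent to $\Sigma_\lambda$, this is the claim. I expect the main obstacle to be the proportionality argument: it relies on Proposition~\ref{prop:stability} to guarantee both the simplicity of the first eigenvalue and that $A_\lambda$ (not a strictly larger domain) is the exact nodal domain, together with the geometric verification of the Dirichlet condition for $w$ via the orthogonality $(R_{P_i})_*X=-X$.
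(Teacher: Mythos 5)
Your proposal is correct and takes essentially the same route as the paper: both observe that $w=\langle X,N\rangle$ is a Jacobi function vanishing on $\partial A_\lambda=h_1\cup h_2$, invoke the one-dimensionality of the eigenspace for $0=\lambda_1(A_\lambda)$ guaranteed by Proposition~\ref{prop:stability} to write $w=au$, and exclude $a=0$ because invariance under $\{\Phi_t\}_{t\in\R}$ only occurs for $\lambda=0$. The only cosmetic difference is that you justify the Dirichlet condition via the anti-invariance $(R_{P_i})_*X=-X$, whereas the paper states directly that $P_1$ and $P_2$ are orthogonal to $X$.
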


\begin{proof}
  The function $w=\langle X,N\rangle$ belongs to the kernel of the stability operator $L$ of $A_\lambda$. Since $P_1$ and $P_2$ are orthogonal to $X$, we have $w=0$ along $h_1$ and $h_2$. Taking into account that $w$ lies in the eigenspace of $L$ associated with $0=\lambda_1(A_\lambda)$ and this subspace is $1$-dimensional, there exists $a\in\R$ (depending on $\lambda$) such that $w=au$. Observe that, if $w$ is identically zero, then $\Sigma_\lambda^*$ is invariant by $\{\Phi_t\}_{t\in\mathbb{R}}$, which only occurs when $\lambda=0$, but this case is excluded by assumption. Therefore, if $\lambda>0$, then $w$ is either positive or negative on the interior of $A_\lambda$, i.e., the interior of $A_\lambda$ is transversal to $X$. As the interiors of $h_0$ and $v$ lie in the interior of $A_\lambda$, we deduce that they are also transversal to $X$.
\end{proof}

\subsection{Embeddedness of unduloids} 
Just like in the vertical case, we infer from the description in Section~\ref{subsec:conjugate} that horizontal nodoids are not even Alexandrov-embedded for any $\lambda>\frac\pi 2$. On the contrary, we can say precisely when unduloids are embedded, which settles the question of embeddedness posed in~\cite{MT}.  

\begin{proposition}\label{prop:properly-embedded-unduloids}
If $\kappa\leq 0$, horizontal unduloids are properly embedded and non-compact.
\end{proposition}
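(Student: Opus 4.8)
The plan is to exploit Proposition~\ref{prop:stability} and Corollary~\ref{cor:horizontal-graph} to realize the fundamental annulus $A_\lambda$ as a graph in a horizontal Killing direction, and then to assemble the global surface $\Sigma_\lambda^*$ from its symmetric copies, checking that distinct copies do not intersect. Since we assume $\kappa\leq 0$ and $0<\lambda<\frac\pi2$ (the unduloid range), Remark~\ref{rmk:compactness} already gives that $\Sigma_\lambda^*$ is a proper non-compact surface, so the content of the statement is embeddedness. The key structural input is Corollary~\ref{cor:horizontal-graph}: the function $w=\langle X,N\rangle$ is proportional to $u=\langle\widetilde X,\widetilde N\rangle$ and, for $\lambda>0$, has a constant sign on the interior of $A_\lambda$, vanishing only along $h_1\cup h_2$. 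This means $A_\lambda$ is everywhere transversal to the Killing field $X$ except on the two boundary curves where it meets the vertical planes $P_1,P_2$ orthogonally.

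First I would argue that the interior of $A_\lambda$ is a graph over (a domain in) the quotient of $\mathbb{M}^2(\kappa)\times\R$ by the flow $\{\Phi_t\}$, using the transversality to $X$. When $\kappa\leq 0$ the orbits of $\{\Phi_t\}$ (hyperbolic translations if $\kappa<0$, Euclidean translations if $\kappa=0$) are complete embedded lines, and the quotient map is a genuine Killing submersion with no closed fibers; this is the decisive difference from the case $\kappa>0$, where the orbits are circles and a graph can close up onto itself. Transversality of $A_\lambda$ to these non-compact fibers, together with the orthogonality along $h_1\cup h_2$ (which places the two boundary edges in the planes $P_1,P_2$ that are the images of fibers), shows that $A_\lambda$ injects into the quotient, i.e.\ each $\Phi_t$-orbit meets $A_\lambda$ at most once in its interior. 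Hence $A_\lambda$ is an embedded graph piece bounded by the two vertical planes $P_1,P_2$.

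Next I would propagate embeddedness to the whole surface. Recall $\Sigma_\lambda^*$ is generated from $A_\lambda$ by iterating the horizontal translation of length $2\ell_0(\lambda)$ (equivalently, by successive mirror symmetries across the family of parallel vertical planes obtained from $P_1,P_2$). Because $A_\lambda$ lies in the closed slab between $P_1$ and $P_2$ and touches each bounding plane only along the single curve $h_1$ or $h_2$, consecutive translated copies meet exactly along these shared boundary curves and nowhere else; and since the planes are totally geodesic and the surface meets them orthogonally, the union is a smooth embedded surface across each $P_i$. Non-consecutive copies lie in disjoint slabs and so cannot intersect. This yields a globally embedded $\Sigma_\lambda^*$.

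The main obstacle I expect is the first step: turning the infinitesimal transversality ``$w\neq 0$ on the interior'' into the global \emph{injectivity} of the orbit-to-point correspondence, i.e.\ ruling out that a single $\Phi_t$-orbit meets the interior of $A_\lambda$ more than once. Transversality only gives that $A_\lambda$ is \emph{locally} a graph; to upgrade to a global graph one must use that $A_\lambda$ is a disk (a bigraph built from four copies of $\Sigma_\lambda$) with controlled boundary behaviour, and that the quotient base for $\kappa\leq0$ is simply connected with complete fibers. I would handle this by a continuity/degree argument along the orbits, or by invoking the maximality in Proposition~\ref{prop:stability}: if some orbit met $A_\lambda$ twice, the corresponding sheet would force $w$ to change sign, contradicting that $w$ has constant sign on the stable nodal domain $A_\lambda$. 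Making this sign argument rigorous across the boundary edges $h_1\cup h_2$ (where $w$ vanishes) is the delicate point, and is exactly where the non-compactness of the fibers for $\kappa\leq0$ is essential.
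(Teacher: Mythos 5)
Your plan follows the same architecture as the paper's proof (non-compactness and properness from Remark~\ref{rmk:compactness}, then the graph property with respect to $X$, then assembly of the reflected copies), but both substantive steps are left with genuine gaps, and the fixes you propose for the first one do not work. The sign argument is invalid: if an orbit of $\{\Phi_t\}$ met the interior of $A_\lambda$ twice, nothing forces $w=\langle X,N\rangle$ to take opposite signs at the two crossings. A surface can be everywhere transversal to a Killing field, with $\langle X,N\rangle$ of constant sign, and still meet single orbits many times; a helicoid versus the vertical field in $\mathbb{R}^3$ is the standard example of such a multigraph, and it is even simply connected. Sign alternation at consecutive crossings is a feature of embedded surfaces bounding a region, i.e., of what you are trying to prove, so that route is circular; and the unspecified ``degree argument'' requires $A_\lambda$ to define a relative cycle in the slab between $P_1$ and $P_2$, which presupposes the slab containment discussed below. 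A telling symptom: your injectivity step only invokes Proposition~\ref{prop:stability} and Corollary~\ref{cor:horizontal-graph}, which hold for \emph{every} $\lambda>0$, nodoids included; if the step were correct it would prove nodoids embedded, yet they are not even Alexandrov-embedded. The paper's injectivity argument uses precisely the feature that singles out unduloids: $\nu_\lambda$ does not vanish in the interior of $\Sigma_\lambda$ when $0<\lambda<\frac\pi2$ (for $\lambda>\frac\pi2$ it vanishes along the curve $\widetilde\delta$ of Proposition~\ref{prop:angle}). Two points of $A_\lambda$ on the same orbit lie at the same height, so taking the vertical plane $P$ through them and applying Rolle's theorem to the height function along $P\cap\Sigma_\lambda$, the non-vanishing of $\nu_\lambda$ produces an interior point where $X$ is tangent to the surface, contradicting Corollary~\ref{cor:horizontal-graph}. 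Thus transversality to $X$ must be combined with transversality to the vertical Killing field; the former alone is not enough. (Your emphasis on the non-compactness of the orbits when $\kappa\le0$ is also misplaced: the same injectivity argument is reused for $\kappa>0$ in the proof of Theorem~\ref{thm:embeddedness}, once the surface is known to avoid $P_1\cap P_2$.)

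The second gap is that your assembly step opens with ``Because $A_\lambda$ lies in the closed slab between $P_1$ and $P_2$\dots'', but this containment is not automatic; it is the other nontrivial half of the paper's proof. A graph in the direction of $X$ with boundary in $P_1\cup P_2$ can a priori bulge outside the slab, and since the surface meets $P_1$ and $P_2$ orthogonally along its boundary, a naive maximum-principle comparison with those two planes runs into boundary tangencies. The paper settles this with the maximum principle with respect to minimal vertical planes combined with the boundary curvature estimates of \cite{Manzano}, adapted to periodic multigraphs as in \cite[Lemma~4.1]{MTAJM}. Note again that for nodoids the containment is false (the surface protrudes past the bounding planes, cf.\ $\ell_1(\lambda)<0$ for $\lambda>\frac\pi2$ in Corollary~\ref{cor:monotonicity-ell}), so any proof of it must use the unduloid hypothesis. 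A minor slip: $A_\lambda$ is an annulus, not a disk. In short, the skeleton of your proposal is the right one, but the two steps carrying the actual content --- injectivity along the orbits and the slab containment --- remain unproven, and the specific arguments you suggest for the first would prove too much.
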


\begin{proof}
We will fix $0<\lambda<\frac{\pi}{2}$. Properness and non-compactness of $\Sigma_\lambda$ are discussed in Remark~\ref{rmk:compactness}. As for embeddedness, we will begin by showing that each integral curve of $X$ intersects $A_\lambda$ at most in a point. Otherwise, consider the vertical plane $P\subset\mathbb{H}^2(\kappa)\times\mathbb\R$ containing two points of $A_\lambda$ in the same integral curve, so these two points lie at the same height with respect to $\mathbb{H}^2\times\{0\}$. From the fact that $\nu_\lambda$ does not vanish in the interior of $\Sigma_\lambda$, it is easy to realize that that height restricted to $P\cap\Sigma_\lambda$ must have an interior critical point (between the two points at the same height), so $X$ is tangent at such a critical point and we reach the desired contradiction. Thus $A_\lambda$ is an $H$-graph in the direction of $X$. Furthermore, the maximum principle with respect to minimal vertical planes, along with the boundary curvature estimates in~\cite{Manzano} (adapted to this periodic $H$-multigraph as in~\cite[Lemma~4.1]{MTAJM}), imply that $A_\lambda$ lies in the vertical slab demarcated by $P_1$ and $P_2$, and hence the complete surface $\Sigma_\lambda^*$ is also embedded.
\end{proof}

However, if $\kappa>0$, embeddedness finds an essential obstruction whenever $\Sigma_\lambda$ reaches the vertical geodesics $P_1\cap P_2$, i.e., if the projection of $\Sigma_\lambda$ to $\mathbb S^2(\kappa)$ runs over any of the poles defined by the great circle $\Gamma=P_0\cap(\mathbb{S}^2(\kappa)\times\{0\})$.

\begin{proof}[Proof of Theorem~\ref{thm:embeddedness}]
Assume that $\kappa>0$. Observe that $\lambda\mapsto\ell_0(\lambda)$ is positive and decreasing by Corollary~\ref{cor:monotonicity-ell}, so it ranges from $\ell_0(0)$ to $\ell_0(\frac{\pi}{2})$. On the one hand, $\ell_0(0)$ is the length of $\widetilde h_0$, a quarter of the length of a horizontal geodesic of $\mathbb{S}^3_b(4H^2+\kappa,H)$; on the other hand, $\ell_0(\frac{\pi}{2})$ is the radius of the domain (as a bigraph) of an $H$-sphere and can be computed from~\cite[p.\ 1268]{Man12} after rescaling the metric. This gives the estimate
\begin{equation}\label{thm:embeddedness:eqn1}
\frac{2}{\sqrt{ \kappa}}\arctan\frac{\sqrt{\kappa}}{2H}=\ell_0(\tfrac{\pi}{2})<\ell_0(\lambda)< \ell_0(0)=\frac{\pi}{\sqrt{4H^2+\kappa}}.
\end{equation}
For a fixed $H>0$, we are interested in values of $\lambda\in(0,\frac{\pi}{2})$ such that $\ell_0(\lambda)=\frac{\pi}{m\sqrt{\kappa}}$ for some $m\in\N$, i.e., such that $\Sigma_\lambda^*$ consists of $2m$ copies of $A_\lambda$ and closes its period in one turn around the axis $\Gamma$, for otherwise embeddedness fails (see also Remark~\ref{rmk:compactness}). Equation~\eqref{thm:embeddedness:eqn1} allows us to say that such compact $H$-unduloids are in correspondence with integers $m\geq 1$ satisfying
\begin{equation}\label{thm:embeddedness:eqn2}
   \frac{\sqrt{4H^2+\kappa}}{\sqrt{\kappa}}< m< \frac{\pi}{2\arctan(\frac{\sqrt{\kappa}}{2H})}.
\end{equation}
If $H\leq\frac{\sqrt{\kappa}}{2}$, no integer value of $m$ satisfies~\eqref{thm:embeddedness:eqn2}, but it is easy to realize that there actually exist such integer values of $m$ for all $H>\frac{\sqrt{\kappa}}{2}$ (see figure~\ref{fig:compact-embedded-moduli-space}). 

For a fixed integer $m\geq 2$, the inequality~\eqref{thm:embeddedness:eqn2} holds true if and only if $\frac{2H}{\sqrt{\kappa}}\in(\cot(\tfrac{\pi}{2m}),\sqrt{m^2-1})$. As $\lambda\mapsto\ell_0(\lambda)$ is continuous and strictly decreasing, there exists a unique value $\lambda=\lambda_m(H)$ such that $\ell_0(\lambda_m(H))=\frac{\pi}{m\sqrt{\kappa}}$ in the aforesaid range for $H$. This yields the existence of the family $\mathcal T_m$ in the statement, and the limit cases follow from the monotonicity of the family:
\begin{itemize}
  \item If $\frac{2H}{\sqrt{\kappa}}=\cot(\tfrac{\pi}{2m})$, then $m=\frac{\pi}{2\arctan(\frac{\sqrt{\kappa}}{2H})}$, and hence $\ell_0(\frac\pi2)=\ell_0(\lambda)$. This means that $\lambda=\frac\pi2$ and the surface reduces to a stack of $m$ tangent $H$-spheres.
  \item Likewise, if $\frac{2H}{\sqrt{\kappa}}=\sqrt{m^2-1}$, then $\lambda=0$, and the surface is an $H$-cylinder.
\end{itemize}
It remains to prove that all these examples are embedded. On the one hand, observe that $\ell_2(\frac\pi2)$ is the radius of the circle of $\mathbb{S}^2(4H^2+\kappa)$ over which the $H$-sphere $\Sigma_{\pi/2}^*$ is a bigraph. This radius is at most a quarter of the length of a great circle of $\mathbb{S}^2(4H^2+\kappa)$ if $H>\frac{\sqrt{\kappa}}{2}$. Using the fact that $0<\ell_2(\lambda)<\ell_2(\frac{\pi}{2})$ (see Corollary~\ref{cor:monotonicity-ell}), we deduce that $h_2$ does not reach $P_1\cap P_2$. On the other hand, again by Corollary~\ref{cor:monotonicity-ell}, we have $\ell_1(\lambda)\leq\ell_1(0)=\ell_2(0)\leq\ell_2(\lambda)$, where we have used that $\Sigma_0^*$ is invariant by $\{\Phi_t\}_{t\in\mathbb{R}}$, whence $h_1$ does not intersect $P_1\cap P_2$ either. This implies that $\Sigma_\lambda\cap P_1\cap P_2=\emptyset$ because otherwise the annulus $A_\lambda$ would have an interior point lying in $P_1\cap P_2$; since $X$ identically vanishes on $P_1\cap P_2$, this would contradict Corollary~\ref{cor:horizontal-graph}. Once we have ensured $\Sigma_\lambda^*$ is away from $P_1\cap P_2$, the same argument as in the proof of Proposition~\ref{prop:properly-embedded-unduloids} ensures that $A_\lambda$ is embedded and lies in the wedge between $P_1$ and $P_2$, so we are done.
\end{proof}

\subsection{Maximum height}\label{subsec:monotonicity}

Since $\Sigma_\lambda^*$ is periodic in a horizontal direction, we can ensure the existence of a point with maximum height over the horizontal plane of symmetry $\mathbb{M}^2(\kappa)\times\{0\}$. This point must be the vertex $3$ in view of Proposition~\ref{prop:angle}, and hence the maximum height is $\mu_2(\lambda)$, see Figure~\ref{fig:conjugate-polygon}.

\begin{proposition}\label{prop:height}
The maximum height of the horizontal Delaunay surface $\Sigma_\lambda^*$ is strictly increasing as a function of $\lambda$.
\end{proposition}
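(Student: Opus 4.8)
The maximum height of $\Sigma_\lambda^*$ equals $\mu_2(\lambda) = \int_{h_2}\sqrt{1-\nu_\lambda^2}$, so the plan is to show that $\lambda\mapsto\mu_2(\lambda)$ is strictly increasing by analyzing this integral via the monotonicity of the angle function established in Proposition~\ref{prop:angle}. The integrand $\sqrt{1-\nu_\lambda^2}$ is large precisely where $\nu_\lambda$ is close to $0$ and small where $|\nu_\lambda|$ is close to $1$, so naive pointwise monotonicity of $\nu_\lambda$ will not directly control $\mu_2$; the key is to parametrize the integral intrinsically. Writing the curve $h_2$ in terms of the arclength parameter of its preimage $\widetilde h_2(s)$, $s\in[-\frac\pi4,\frac\lambda2]$, I would express $\mu_2(\lambda)$ as an integral over $s$, noting that increasing $\lambda$ enlarges the domain of integration from $[-\frac\pi4,\frac{\lambda_1}{2}]$ to $[-\frac\pi4,\frac{\lambda_2}{2}]$.

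First I would split the change $\mu_2(\lambda_2)-\mu_2(\lambda_1)$ for $\frac{\pi}{2}\leq\lambda_1<\lambda_2$ into two contributions: the integral over the shared subarc $\widetilde h_2([-\frac\pi4,\frac{\lambda_1}{2}])$, where the two angle functions $\nu_{\lambda_1}$ and $\nu_{\lambda_2}$ differ, and the integral over the new subarc $\widetilde h_2([\frac{\lambda_1}{2},\frac{\lambda_2}{2}])$, which appears only for $\lambda_2$. On the new subarc the contribution is manifestly positive since $\sqrt{1-\nu_{\lambda_2}^2}\geq 0$ and is strictly positive wherever $\nu_{\lambda_2}^2<1$; by Proposition~\ref{prop:angle}(a) the value $\nu_{\lambda_2}=-1$ is attained only at the isolated vertex $\widetilde 3$, so this term is strictly positive. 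On the shared subarc, Proposition~\ref{prop:angle}(c) tells us that $\nu_\lambda$ is negative and strictly decreasing in $\lambda$ along $\widetilde h_2$, hence $\nu_{\lambda_2}<\nu_{\lambda_1}<0$, which gives $\nu_{\lambda_2}^2>\nu_{\lambda_1}^2$ and therefore $\sqrt{1-\nu_{\lambda_2}^2}<\sqrt{1-\nu_{\lambda_1}^2}$ pointwise. Thus the shared subarc contributes \emph{negatively} to the difference.

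The main obstacle is exactly this competition: the new subarc increases $\mu_2$ while the shared subarc decreases it, so one cannot conclude strict monotonicity term-by-term, and a more global argument is needed. I expect the cleanest resolution is \emph{not} to work with $\mu_2$ in isolation but to exploit a conserved quantity coming from the conjugate construction. Specifically, the surface $\widetilde\Sigma_\lambda$ is transversal to the Killing field $\widetilde X$, and the flux of $\widetilde X$ across a boundary curve (or equivalently the fact that the height function and the horizontal displacement are conjugate harmonic data) should yield an algebraic relation tying $\mu_2$ to $\ell_1$, $\ell_2$, $\mu_1$ and geometric data of the polygon $\widetilde\Gamma_\lambda$. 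Since the vertical geodesic $\widetilde v$ has fixed length $\frac\pi2\cdot\frac{4H}{4H^2+\kappa}$ independent of $\lambda$, and the heights $z_i$ of the polygon's vertices are explicit linear functions of $\lambda$ (read off from the parametrizations of $\widetilde h_1,\widetilde h_2,\widetilde v$), I would look for a clean formula expressing the total height gain of $\Sigma_\lambda^*$ directly in terms of the $z$-coordinate increase of the polygon. The height $\mu_2(\lambda)$ in $\mathbb{M}^2(\kappa)\times\mathbb{R}$ is governed by the rotation of the normal $\widetilde N$ along $\widetilde h_2$, which is encoded by the increasing length of the horizontal geodesic arc; comparing the endpoints, the total vertical rise along the fundamental piece is controlled by the monotone growth of the polygon, and this should override the local sign competition.

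If a conserved-flux formula proves elusive, the fallback is a direct differentiation argument: treat $\mu_2$ as a smooth function of $\lambda$ (justified by the continuous dependence of $\widetilde\Sigma_\lambda$ from Proposition~\ref{prop:uniqueness}), compute $\frac{d}{d\lambda}\mu_2(\lambda)$, and split it into a boundary term from the moving endpoint $\widetilde h_2(\frac\lambda2)$ and an interior term from $\partial_\lambda\nu_\lambda$. The boundary term is positive and explicit; the interior term involves $\int_{h_2}\frac{-\nu_\lambda\,\partial_\lambda\nu_\lambda}{\sqrt{1-\nu_\lambda^2}}$, and since $\nu_\lambda<0$ with $\partial_\lambda\nu_\lambda<0$ by Proposition~\ref{prop:angle}(c), the product $-\nu_\lambda\,\partial_\lambda\nu_\lambda<0$, confirming again the negative interior contribution. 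The hard part will be establishing that the positive boundary term dominates; I would attempt this by a geometric comparison showing that the added material at vertex $3$ always gains more height than is lost by the flattening of the rest of $h_2$, most naturally via the interpretation of $\mu_2$ as the distance in $\mathbb{R}$ traversed by the endpoint of a curve whose tangent angle is strictly controlled by $\lambda$ through~\eqref{eqn:invariant-angle-parameter}-type relations adapted to the present polygon.
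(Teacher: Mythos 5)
Your proposal does not close the argument: it diagnoses the genuine obstacle accurately (the sign competition between the new subarc of $h_2$, which raises $\mu_2$, and the shared subarc, where Proposition~\ref{prop:angle}(c) gives $\nu_{\lambda_2}<\nu_{\lambda_1}<0$ and hence a \emph{negative} contribution), but neither of your two suggested resolutions is actually carried out. The flux route produces no identity --- it is only the hope that one exists --- and in the differentiation route you yourself concede that ``the hard part will be establishing that the positive boundary term dominates'', which is precisely the content of the proposition. So what is missing is an idea, not a computation.

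The paper's resolution is a realignment trick that reverses the direction of the pointwise comparison. Instead of comparing $\nu_{\lambda_1}$ and $\nu_{\lambda_2}$ at the \emph{same} point of $\widetilde h_2$, one compares them at points matched by the screw motion $\widetilde\Phi_{\lambda_2-\lambda_1}$ of~\eqref{eqn:screw-motions}, which satisfies $\widetilde\Phi_{\lambda_2-\lambda_1}(\widetilde h_2(s))=\widetilde h_2\bigl(s+\tfrac{\lambda_2-\lambda_1}{2}\bigr)$. The translated surface $\widetilde\Sigma_{\lambda_1}'=\widetilde\Phi_{\lambda_2-\lambda_1}(\widetilde\Sigma_{\lambda_1})$ and $\widetilde\Sigma_{\lambda_2}$ are Killing graphs whose boundaries are Nitsche graphs over the same domain for $\Pi_0$, and they are ordered, so $\widetilde\Sigma_{\lambda_1}'$ lies above $\widetilde\Sigma_{\lambda_2}$ (equivalently, it is a barrier for the Plateau problem with boundary $\widetilde\Gamma_{\lambda_2}$). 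Comparing the normals along the common boundary, and using that $\widetilde\Phi_{\lambda_2-\lambda_1}$ preserves the angle function, one gets
\[-1<\nu_{\lambda_1}<\nu_{\lambda_2}\circ\widetilde\Phi_{\lambda_2-\lambda_1}<0 \quad\text{along } \widetilde h_2\bigl(\bigl(\tfrac{-\pi}{4},\tfrac{\lambda_1}{2}\bigr)\bigr),\]
which is the \emph{opposite} direction to the fixed-point comparison of Proposition~\ref{prop:angle}(c). Hence $\sqrt{1-\nu_{\lambda_1}^2}<\sqrt{1-(\nu_{\lambda_2}\circ\widetilde\Phi_{\lambda_2-\lambda_1})^2}$ pointwise; integrating over the $\lambda_1$-parameter interval and noting that the shift $s\mapsto s+\tfrac{\lambda_2-\lambda_1}{2}$ carries $[\tfrac{-\pi}{4},\tfrac{\lambda_1}{2}]$ into a subinterval of $[\tfrac{-\pi}{4},\tfrac{\lambda_2}{2}]$, both contributions are now positive and $\mu_2(\lambda_1)<\mu_2(\lambda_2)$ follows. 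With this one idea, your ``shared subarc plus new subarc'' framework works verbatim; without it, your write-up stops exactly where the proof has to begin.
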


\begin{proof}
We will prove that $\mu_2(\lambda_1)<\mu_2(\lambda_2)$ whenever $0\leq\lambda_1<\lambda_2$. Using the $1$-parameter group of screw-motions $\{\widetilde\Phi_t\}$, let $\widetilde\Sigma_{\lambda_1}'=\widetilde\Phi_{\lambda_2-\lambda_1}(\widetilde\Sigma_{\lambda_1})$. Then $\widetilde\Sigma_{\lambda_1}'$ and $\widetilde\Sigma_{\lambda_2}$ are Killing graphs and their boundaries are Nitsche graphs over the same domain of $\R^2$ for the Killing submersion $\Pi_0$. Moreover, these boundaries are ordered as Nitsche graphs in the sense of~\cite[Proposition~3.8]{Man12}, so $\widetilde\Sigma_{\lambda_1}'$ is located above $\widetilde\Sigma_{\lambda_2}$ in the model $M(4H^2+\kappa,H)$ (alternatively, we could argue that $\widetilde\Sigma_{\lambda_1}'$ acts as a barrier in the solution of the Plateau problem for $\widetilde\Gamma_{\lambda_2}$). 

This enables a comparison of the angle functions of $\widetilde\Sigma_{\lambda_1}'$ and $\widetilde\Sigma_{\lambda_2}$ along their common boundary. Equivalently, the angle functions of $\widetilde\Sigma_{\lambda_1}$ and $\widetilde\Sigma_{\lambda_2}$ are comparable through $\widetilde\Phi_{\lambda_2-\lambda_1}$ (note that this isometry preserves the angle function), and we get that $-1<\nu_{\lambda_1}<\nu_{\lambda_2}\circ\widetilde\Phi_{\lambda_2-\lambda_1}<0$, and hence
\begin{equation}\label{prop:height:eqn1}
\textstyle\sqrt{1-\nu_{\lambda_1}^2}<\sqrt{1-(\nu_{\lambda_2}\circ\widetilde\Phi_{\lambda_2-\lambda_1})^2}
\end{equation}
on $h_2((\frac{-\pi}{4},\frac{\lambda_1}{2}))$. Integrating~\eqref{prop:height:eqn1} along this curve and then enlarging the interval to $(\frac{-\pi}{4},\frac{\lambda_2}{2})$ in the same fashion as the proof of Corollary~\ref{cor:monotonicity-ell}, we deduce that the maximum heights satisfy $\mu_2(\lambda_1)<\mu_2(\lambda_2)$.
\end{proof}

\begin{remark}
The very same argument as in the proof of Proposition~\ref{prop:height} shows that $\mu_1(\lambda)$ is also a strictly increasing function of $\lambda$.
\end{remark}


\begin{thebibliography}{[19]}
  \bibliographystyle{alpha}

\bibitem{AEG}
J.\ A.\ Aledo, J.\ M.\ Espinar, J.\ A.\ Gálvez.
\newblock Height estimates for surfaces with positive constant mean curvature in $\mathbb{M}^2\times\mathbb{R}$
\newblock \emph{Illinois J.\ Math.} \textbf{52} (2008), no.~1, 203--211.

\bibitem{AL2015}
B.\ E.\ Andrews, H.\ Li.
\newblock Embedded constant mean curvature tori in the three-sphere. 
\newblock \emph{J.\ Differential Geom.} \textbf{99} (2015), no.~2, 169--189.

\bibitem{Bobenko1991}
A.\ I.\ Bobenko. 
\newblock All constant mean curvature tori in $\mathbf{R}^3$, $S^3$ and $H^3$ in terms of theta-functions. 
\newblock \emph{Math. Ann.} \textbf{290} (1991), no.~2, 209--245.

\bibitem{Brendle2013}
S.\ Brendle. 
\newblock Embedded minimal tori in {$S^3$} and the {L}awson conjecture.
\newblock \emph{Acta Math.} \textbf{211} (2013), no.~2, 177--190.

\bibitem{Daniel07}
B.\ Daniel.
\newblock Isometric immersions into 3-dimensional homogeneous manifolds.
\newblock {\em Comment. Math. Helv.} \textbf{82} (2007), no.~1, 87--131.

\bibitem{HM1990} 
D.\ Hoffman, W.\ H.\ Meeks III. 
\newblock The strong halfspace theorem for minimal surfaces. 
\newblock \emph{Invent.\ Math.} \textbf{101} (1990), no.~2, 373--377. 

\bibitem{HH89}
W.-T.\ Hsiang, W.-Y.\ Hsiang.
\newblock On the uniqueness of isoperimetric solutions and imbedded soap bubbles in non-compact symmetric spaces, I. 
\newblock \emph{Invent.\ Math.} \textbf{98} (1989), no.~1, 39--58.

\bibitem{Hsiang1982}
W.-Y.\ Hsiang. 
\newblock On generalization of theorems of {A}. {D}. {A}lexandrov and {C}. {D}elaunay on hypersurfaces of constant mean curvature. 
\newblock \emph{Duke Math.\ J.} \textbf{49} (1982), no.~3, 485--496.

\bibitem{KKS1989}
N.\ J.\ Korevaar, R.\ Kusner, B.\ Solomon. 
\newblock The structure of complete embedded surfaces with constant mean curvature. 
\newblock \emph{J.\ Differential Geom.} \textbf{30} (1989), no.~2, 465--503.

\bibitem{KKMS1992}
N.\ J.\ Korevaar, R.\ Kusner, W.\ H.\ Meeks III, B.\ Solomon. 
\newblock Constant mean curvature surfaces in hyperbolic space. 
\newblock \emph{Amer.\ J.\ Math} \textbf{114} (1992), no.~1, 1--43. 

\bibitem{LM}
A.\ M.\ Lerma, J.\ M.\ Manzano. 
\newblock Compact stable surfaces with constant mean curvature in Killing submersions. 
\newblock \emph{Ann.\ Mat.\ Pura Appl.} \textbf{196} (2017), no.~4, 1345--1364.

\bibitem{Man12}
J.\ M.\ Manzano. 
\newblock Superficies de curvatura media constante en espacios homogéneos. 
\newblock \emph{PhD thesis}. Universidad de Granada. \textsc{isbn}:978-849028269-4. Available at the first author's website.

\bibitem{Manzano}
J.\ M.\ Manzano. 
\newblock Estimates for constant mean curvature graphs in $M\times\mathbb{R}$. 
\newblock \emph{Rev.\ Mat.\ Iberoam.} \textbf{29} (2013), no.~2, 1263--1281.

\bibitem{Man14}
J.\ M.\ Manzano. 
\newblock On the classification of Killing submersions and their isometries. 
\newblock \emph{Pac.\ J.\ Math.} \textbf{270} (2014), no.~2, 367--392.

\bibitem{MPR}
J.\ M.\ Manzano, J.\ Pérez, M.\ Rodríguez.
\newblock Parabolic stable surfaces with constant mean curvature.
\newblock \emph{Calc.\ Var.\ Partial Differential Equations}, \textbf{42} (2011), no.~1--2, 137--152.

\bibitem{MT}
J.\ M.\ Manzano, F.\ Torralbo.
\newblock New examples of constant mean curvature surfaces in $\mathbb{S}^2\times\mathbb{R}$ and $\mathbb{H}^2\times\mathbb{R}$.
\newblock \emph{Michigan Math.\ J.}, \textbf{63} (2014), no.~4, 701--723.

\bibitem{MTAJM}
J.\ M.\ Manzano, F.\ Torralbo.
\newblock Compact embedded surfaces with constant mean curvature in $\mathbb{S}^2\times \mathbb{R}$.
\newblock To appear in \emph{Amer.\ J.\ Math.} Available at arXiv:1802.04070 [math.DG].

\bibitem{Mazet2013}
L.\ Mazet. 
\newblock A general halfspace theorem for constant mean curvature surfaces.
\newblock \emph{Amer.\ J.\ Math.} \textbf{125} (2013), no.~3, 801--834. 

\bibitem{Mazet2015}
L.\ Mazet. 
\newblock Cylindrically bounded constant mean curvature surfaces in $\mathbb{H}^2\times\mathbb {R}$. 
\newblock \emph{Trans.\ Amer.\ Math.\ Soc.} \textbf{367} (2015), no.~8, 5329--5354.  

\bibitem{MY82}
W.\ H.\ Meeks, S.-T.\ Yau.
\newblock The existence of embedded minimal surfaces and the problem of uniqueness.
\newblock \emph{Math.\ Z.} \textbf{179} (1982), no.~2, 151--168.

\bibitem{PR99}
R.\ Pedrosa, M.\ Ritoré.
\newblock Isoperimetric domains in the Riemannian product of a circle with a simply connected space form and applications to free boundary problems.
\newblock {\em Indiana Univ.\ Math.\ J.} \textbf{48} (1999), no.~4, 1357--1394.

\bibitem{Plehnert2}
J.\ Plehnert.
\newblock Constant mean curvature $k$-noids in homogeneous manifolds.
\newblock \emph{Illinois J.\ Math.} \textbf{58} (2014), no.~1, 233--249.

\bibitem{Tomter1993}
P.\ Tomter. 
\newblock Constant mean curvature surfaces in the {H}eisenberg group. 
\newblock In \emph{Differential geometry: partial differential equations on manifolds}. Los Angeles 1993, Vol.~54, pp. 485--495. 

\bibitem{Torralbo}
F.\ Torralbo.
\newblock Compact minimal surfaces in the Berger spheres.
\newblock \emph{Ann.\ Global Anal.\ Geom.} \textbf{41} (2012), no.~4, 391--405.

\bibitem{Torralbo2010}
F.~Torralbo.
\newblock Rotationally invariant constant mean curvature surfaces in homogeneous 3-manifolds. 
\newblock \emph{Differential Geom.\ Appl.} (5) {\bf 28} (2010), no.~5, 593--607. 

\end{thebibliography}
\end{document}